\DeclareMathOperator*{\E}{\mathbb{E}}
\DeclareMathOperator*{\R}{\mathbb{R}}
\DeclareMathOperator*{\cov}{\operatorname{Cov}}
\DeclareMathOperator*{\sym}{\operatorname{sym}}
\newcommand{\OUTPUT}{\item[\textbf{Output:}] \hspace{\algorithmicindent}}
\newcommand{\norm}[1]{\left\lVert#1\right\rVert}
\newcommand{\di}{\mathrm{d}}
\newcommand{\N}{\mathbb{N}}
\newcommand*\circled[1]{\tikz[baseline=(char.base)]{
            \node[shape=circle,draw,inner sep=2pt] (char) {#1};}}
\newcommand{\inn}[2]{\left\langle #1, #2 \right\rangle}
\begin{document}

\title{Measure transport with kernel mean embeddings}

\author{\name Linfeng Wang \email linfeng.2.wang@kcl.ac.uk \\
       \addr Department of Mathematics\\
       King's College London\\
       Strand, London, WC2R 2LS, UK
       \AND
       \name Nikolas N\"usken\email nikolas.nusken@kcl.ac.uk \\
       \addr Department of Mathematics\\\
       King's College London\\
       Strand, London, WC2R 2LS, UK}

\editor{My editor}

\maketitle

\begin{abstract}
Kalman filters constitute a scalable and robust methodology for approximate Bayesian inference, matching first and second order moments of the target posterior. To improve the accuracy in nonlinear and non-Gaussian settings, we extend this principle to include more or different characteristics, based on kernel mean embeddings (KMEs) of probability measures into reproducing kernel Hilbert spaces. Focusing on the continuous-time setting, we develop a family of interacting particle systems (termed \emph{KME-dynamics}) that bridge between prior and posterior, and that include the Kalman-Bucy filter as a special case. KME-dynamics does not require the score of the target, but rather estimates the score implicitly and intrinsically, and we develop links to score-based generative modeling and importance reweighting. A variant of KME-dynamics has recently been derived from an optimal transport and Fisher-Rao gradient flow perspective by Maurais and Marzouk, and we expose further connections to (kernelised) diffusion maps, leading to a variational formulation of regression type. Finally, we conduct numerical experiments on toy examples and the Lorenz 63 and 96 models, comparing our results against the ensemble Kalman filter and the mapping particle filter (Pulido and van Leeuwen, 2019, J. Comput. Phys.). Our experiments show particular promise for a hybrid modification (called Kalman-adjusted KME-dynamics).
\end{abstract}

\begin{keywords}
  Kernel mean embeddings, measure transport, interacting particle systems, data assimilation, Kalman filtering
\end{keywords}

\section{Introduction}
The task of replicating a time-discrete transition or a time-continuous flow of probability measures by a system of interacting particles is a ubiquitous challenge in computational statistics and machine learning. In a  Bayesian context, we might be presented with samples $(X_0^i)_{i=1}^N$ from the prior $\pi_0$, challenged to transform those into samples $(X^i_1)_{i=1}^N$ from the posterior $\pi_1$. In this paper, we will focus on a variant of this problem in (artificial, algorithmic) continuous time, and consider the curve $(\pi_t)_{t \in [0,1]}$ of distributions
\begin{equation}
\label{eq:interpolation intro}
\pi_t = \frac{e^{-th} \pi_0}{Z_t}, \qquad t \in [0,1],
\end{equation}
see, for example, \cite{heng2021gibbs,reich2011dynamical,vargas2024transport}. In \eqref{eq:interpolation intro}, the function $h:\mathbb{R}^d \rightarrow \mathbb{R}$ stands for the negative log-likelihood (we suppress the dependence on observed data in the notation), and the normalising constants are given by 
\begin{equation}
    \label{eq:definition of normalising constant}
    Z_t = \int_{\mathbb{R}^d} e^{-th} \,  \mathrm{d}\pi_0.
\end{equation} 
For $t=1$, we thus recover Bayes' formula, and \eqref{eq:interpolation intro} provides an interpolation between the prior and the posterior, where the likelihood is introduced in an incremental fashion. 

The central problem of Bayesian posterior approximation may be approached by constructing a dynamical evolution governed by
\begin{equation}
\label{eq:MF intro}
\mathrm{d}X_t = a_t(X_t,\rho_t) \, \mathrm{d}t + \sigma_t(X_t, \rho_t) \, \mathrm{d}W_t, \qquad X_0 \sim \rho_0 = \pi_0,    
\end{equation}
choosing the coefficients $a_t$ and $\sigma_t$ in such a way that the corresponding distributions ${\rho_t = \mathrm{Law} X_t}$ match $\pi_t$ as closely as possible, for all $t \in [0,1]$. We stress the fact that $a_t$ and $\sigma_t$ are allowed to depend on the law $\rho_t$, lending additional flexibility to the framework. Given appropriate coefficients, an interacting-particle and discrete-time approximation of \eqref{eq:MF intro} yields an implementable algorithm that produces approximate samples from $\pi_1$.   

\textbf{The Kalman paradigm.} Enforcing strict equality between \eqref{eq:interpolation intro} and \eqref{eq:MF intro}, that is, $\rho_t = \pi_t$ for all $t \in [0,1]$, is ambitious, as $a_t$ and $\sigma_t$ typically depend on $\rho_t$ and $h$ through high-dimensional PDEs, see Section \ref{sec:diffusion maps} below or,  e.g., eq. (11) in \cite{reich2011dynamical} or eq. (9) in \cite{heng2021gibbs}. To increase scalability and reduce computational burden, it is thus reasonable to require equivalence between \eqref{eq:interpolation intro} and \eqref{eq:MF intro} only according to certain characteristics. The prototypical example is to match first and second moments between $\rho_t$ and $\pi_t$ (see, for example, \citet[Chapter 3]{calvello2022ensemble}), leading to closed-form expressions for $a_t$ and $\sigma_t$, and to the celebrated family of Kalman filters \citep{evensen2022data,reich2015probabilistic,law2015data}. Empirically, Kalman-type methodologies perform robustly and with a moderate computational budget in high-dimensional settings, but the restriction to first and second moments renders them approximate when nonlinearity and/or non-Gaussianity is present, and they may thus turn out unreliable in highly complex data assimilation scenarios.   

\textbf{Embedding probability measures.} In this work, we follow an abstract version of the Kalman principle and consider mappings of the form 
\begin{equation}
\label{eq:abstract embedding}
\Phi: \mathcal{P}(\mathbb{R}^d) \rightarrow \mathcal{H},
\end{equation}
from the set of probability measures $\mathcal{P}(\mathbb{R}^d)$ to an appropriate Hilbert space $\mathcal{H}$. The space $\mathcal{H}$ should be thought of as containing the information $\Phi(\rho)$ that we would like to retain from a probability measure $\rho \in \mathcal{P}(\mathbb{R}^d)$. Accordingly, we then impose $\Phi(\rho_t) = \Phi(\pi_t)$, a requirement often considerably weaker than $\rho_t = \pi_t$. 

\textbf{Kernel mean embeddings.} Throughout this work, we focus on the case when $\mathcal{H}$ is the reproducing kernel Hilbert space (RKHS) $\mathcal{H}_k$ associated to a positive definite kernel $k: \mathbb{R}^d \times \mathbb{R}^d \rightarrow \mathbb{R}$, and $\Phi$ is the corresponding kernel mean embedding (KME) $\Phi_k$, see \cite{muandet2017kernel} for an overview (we summarise relevant background in Section \ref{sec:rkhs}). For the specific choice \begin{equation}
\label{eq:quadratic_kernel}
k_2(x,y) = (x^\top y + 1)^2,
\end{equation}
the corresponding RKHS $\mathcal{H}_{k_2}$ is finite-dimensional, containing information precisely about first and second order moments encoded in the KME $\Phi_{k_2}$ (see Observation \ref{obs:Kalman} below). Unsurprisingly, therefore, our approach based on \eqref{eq:abstract embedding} recovers specific forms of Kalman filters for the choice $\mathcal{H} = \mathcal{H}_{k_2}$, as we demonstrate below in Section \ref{sec:quadratic Kalman}. 

Leveraging the generality of \eqref{eq:abstract embedding}, it is plausible to remedy the limitations of standard Kalman filters by considering larger target spaces than $\mathcal{H}_{k_2}$, enhancing the expressiveness of the map $\Phi$. Indeed, \emph{characteristic} kernels (see Definition \ref{def:characteristic} below) allow full recovery of $\rho \in \mathcal{P}(\mathbb{R}^d)$ from $\Phi_k(\rho) \in \mathcal{H}_k$, the RKHS in this case necessarily being infinite-dimensional. Those kernels represent a full nonparametric description of probability measures through their KMEs, and therefore in principle lead to exact Bayesian inference schemes. A judicious kernel choice (possibly ``in between'' $k_2$ and a characteristic kernel) therefore holds the promise of striking a balance between accuracy, robustness and computational tractability. 

\subsection{Structure and background}
\noindent \textbf{Outline and contributions.} To develop our framework around \eqref{eq:abstract embedding}, we proceed as follows. In Section \ref{sec:rkhs} we survey relevant background on positive definite kernels, reproducing kernel Hilbert spaces, and kernel mean embeddings. In Section \ref{sec:construction}, we detail the construction of our proposed KME-dynamics, culminating in Algorithm \ref{alg:KME dynamics}. In Section \ref{sec:quadratic Kalman}, we establish the connection between KME-dynamics and the continuous-time Kalman-Bucy filter through the quadratic kernel $k_2$ defined in \eqref{eq:quadratic_kernel}. In Sections \ref{sec:Score estimation} and Section \ref{sec:importance sampling scheme}, we link KME-dynamics to score estimators in generative modeling, and leverage this connection to develop an importance-weighting scheme for KME-dynamics. Section \ref{sec:diffusion maps} exposes connections  to related work, in particular to (kernelised) diffusion maps, the Poisson equation, and Tikhonov regularised regression functionals. We test various aspects of KME-dynamics in numerical experiments in Section \ref{sec:experiments}, including one-dimensional toy examples, normalising constant estimation, and a data assimilation task for the Lorenz 63 and 96 model systems.

\textbf{Concurrent and related work.} Close to finishing the writing of this paper, we became aware of the very recent preprint \cite{maurais2024sampling}, extension of the workshop paper \cite{maurais2023adaptive}, which suggests a very similar algorithm,  based on a different derivation centred around optimal transport and the Fisher-Rao gradient flow; the KME-dynamics in Algorithm \ref{alg:KME dynamics} reduces to the KFR-flow proposed by \cite{maurais2024sampling} for $C_t = I_{d \times d}$ and $v^0_t \equiv 0$.  

Prior work has considered Bayesian inference through kernel mean embeddings and the \emph{kernel Bayes' rule}, see, for instance, \cite{fukumizu2013kernel, song2009hilbert, song2013kernel, SongSGS10, xu2022importance} or 
\cite{boots2013hilbert, NishiyamaBGF12} for a control-theoretic context. The kernel Bayes' rule requires a scenario slightly different from ours, in particular the availability of data from the joint parameter-observation distribution to approximate the corresponding covariance operators. Closer to our setting (where we base our inference on the negative log-likelihood $h$) is the \emph{kernel Kalman rule}, see \cite{gebhardt2017kernel,gebhardt2019kernel,sun2023adaptive}, and our approach can be viewed as a continuous-time variant tailored to the interpolation \eqref{eq:interpolation intro}; see Remark  \ref{rem:kernel Bayes} below. We would also like to point the reader to \citet[Section 6]{klebanov2020rigorous}, providing a discussion about Gaussian and non-Gaussian Kalman update rules for KMEs, and to \citet[Section 4.1]{kuang2019sample}, discussing Gaussian optimal transport with quadratic kernels. Both complement our result from Proposition \ref{pro:ReproducingKF}, stating the equivalence of KME-dynamics and the Kalman-Bucy filter for quadratic kernels. In Section \ref{sec:diffusion maps}, we build on the recently introduced \emph{kernelised diffusion maps} \citep{pillaud2023kernelized}, and mention in passing that (unkernelised) diffusion maps \citep{coifman2006diffusion} have been used in Bayesian inference following a similar line of reasoning \citep{taghvaei2020diffusion,pathiraja2021analysis}.

\textbf{Score estimation.} The methodology that we develop in this paper can be employed when the prior $\pi_0$ is only available through a sample-based approximation, $\pi_0 \approx \tfrac{1}{N}\sum_{i=1}^N \delta_{X_0^i}$, as is common in stochastic filtering and data assimilation applications \citep{law2015data,reich2015probabilistic}. This stands in contrast to gradient-based MCMC-type schemes such as Langevin or Hamiltonian Monte Carlo \citep[Chapter 9]{barbu2020monte}, or Stein variational gradient descent \citep{liu2016stein}, which require the score $\nabla \log \pi_0$. At a conceptual level, KME-dynamics approximates the score implicitly and inherently via the identity
\begin{equation}
\label{eq:IBP_ KEM for score estimation}
-\int_{\R^{d}} \nabla_{x}k(\cdot,x) \cdot \nabla_{x} \log \pi(x) \pi(\mathrm{d} x) = \int_{\R^{d}} \Delta_x k(\cdot,x) \pi(\mathrm{d} x) \approx \frac{1}{N} \sum_{i=1}^N \Delta_{X^i} k(\cdot, X^i),
\end{equation}
bypassing the direct computation of $\nabla \log \pi_0$. Leveraging  \eqref{eq:IBP_ KEM for score estimation}, we link KME-dynamics to score estimators used in generative modeling \citep{vincent2011connection,song2021score}, put forward a kernelised version, and develop an importance-weighted KMED scheme (see Sections \ref{sec:Score estimation} and  \ref{sec:importance sampling scheme}).

To put into perspective  such implicit score estimation, it is of interest to numerically compare KME-dynamics to methods that rely on exterior inputs to estimate the score. One  proposal of this kind is the Mapping Particle Filter (MPF) by \citet{pulido2019sequential} that combines Stein Variational Gradient Descent \citep{liu2016stein} either with a Monte-Carlo or a Gaussian estimate of the score (see \cite{stordal2021p} for a comparison). In Section \ref{sec:Lorenz} we observe favourable performance and robustness of KME-dynamics in Lorenz 63 and 96 data assimilation scenarios, supporting the idea that inherent score estimation might be preferable.

\textbf{Other reference dynamics.} The interpolation \eqref{eq:interpolation intro} may be replaced by other reference dynamics according to the application. For example, $\pi_t$ might solve the Kushner-Stratonovich SPDE from stochastic nonlinear filtering with continuous-time observations \citep[Chapter 3]{bain2009fundamentals}. Matching the mean-field dynamics \eqref{eq:MF intro} leads to the \emph{feedback particle filter}, see \cite{yang2013feedback,taghvaei2018kalman,pathiraja2021mckean,coghi2023rough}. For a comprehensive recent overview we refer the reader to \cite{calvello2022ensemble} and references therein.

\noindent \textbf{Notation.} 
Throughout the paper, we denote by $\mathcal{P}(\mathbb{R}^d)$ the set of probability measures on $\mathbb{R}^d$ (equipped with the Borel $\sigma$-algebra). Probability measures and their densities with respect to the Lebesgue measure will be denoted by the same symbol. The Hilbert space of (equivalence classes of) $\pi$-square-integrable functions will be denoted by $L^2(\pi) = \{f:\mathbb{R}^d \rightarrow \mathbb{R}: \,\, \int_{\mathbb{R}^d} f^2 \, \mathrm{d} \pi < \infty \}$, and the corresponding inner product by $\langle \cdot, \cdot \rangle_{L^2(\pi)}$. The space of smooth compactly supported functions on $\mathbb{R}^d$ will be denoted by $C_c^\infty(\mathbb{R}^d)$. The set of symmetric positive definite matrices will be referred to as $\R^{d\times d}_{\text{sym}, >0}$. For $\rho \in \mathcal{P}(\mathbb{R}^d)$, we denote its mean by $\mu(\rho) = \int_{\mathbb{R}^d} x \, \mathrm{d}\rho \in \mathbb{R}^d$, and its covariance matrix by $\cov(\rho) = \int_{\mathbb{R}^d}(x - \mu)(x - \mu)^\top \mathrm{d}\rho \in \mathbb{R}^{d \times d}$.

\section{Reproducing kernel Hilbert spaces and mean embeddings}
\label{sec:rkhs}
In this section we recall relevant notions corresponding to  positive definite kernels; we refer the reader to \cite{smola1998learning,steinwart2008support} for background on reproducing kernel Hilbert spaces, and to \cite{muandet2017kernel} for a survey on kernel mean embeddings.

\textbf{Positive definite kernels and feature maps.} A bivariate function $k:\mathbb{R}^d \times \mathbb{R}^d \rightarrow \mathbb{R}$ is called a \emph{positive definite kernel} if 
\begin{enumerate}
    \item it is \emph{
    symmetric}, that is, $k(x,y) = k(y,x)$ for all $x,y\in \mathbb{R}^d$,
    \item the associated Gram matrices are \emph{positive semi-definite}, that is, for all integers ${N \in \N}$, coefficients $(\alpha_{i})_{i=1}^{N} \subset \R$ and points $(x_{i})_{i=1}^{N}\subset \R^{d}$, we have ${\sum_{i,j = 1}^{N}\alpha_{i}\alpha_{j}k(x_{i},x_{j}) \geq 0}$.
\end{enumerate}

For every positive definite kernel $k$, there exists an  (up to isomorphism) unique associated \emph{reproducing kernel Hilbert space} (RKHS) of functions, denoted  $(\mathcal{H}_{k}, \langle \cdot, \cdot \rangle_{\mathcal{H}_k})$, characterised  by the fact that ${k(\cdot,x) \in \mathcal{H}_k}$ for all $x \in \mathbb{R}^d$, as well as by the \emph{reproducing property} \citep[Theorem 12.11]{wainwright2019high}:
\begin{equation}
\label{eq:rep property}
\nonumber
\inn{f}{k(\cdot ,x)}_{\mathcal{H}_{k}} = f(x), \qquad \text{for all } x \in \mathbb{R}^d, \,\, f \in \mathcal{H}_k.  
\end{equation}
A more constructive approach to understanding  $\mathcal{H}_k$ is as follows \citep[Section 2.3]{kanagawa2018gaussian}: 
For a fixed positive definite kernel $k$, we may consider the \emph{feature map}
\begin{align}
    \phi_k:{\R}^{d} &\rightarrow  \mathcal{H}_{k}, \notag\\
    x & \mapsto  k(\cdot, x),\label{eq:feature map}
\end{align}
which embeds the points from $\mathbb{R}^d$ into the function space $\mathcal{H}_k$, typically encoding a similarity-like notion between $x,y \in \mathbb{R}^d$ in the form of $k(x,y) = \langle k(\cdot, x), k(\cdot, y) \rangle_{\mathcal{H}_k}$. Building on this intuition, the RKHS $\mathcal{H}_k$ can be constructed as an appropriate closure\footnote{The closure is taken with respect to $\langle \cdot, \cdot \rangle_{\mathcal{H}_k} = \Vert \cdot \Vert^2_{\mathcal{H}_k} $, defined in such a way that the reproducing property \eqref{eq:rep property} holds. See \citet[Section 2.3]{kanagawa2018gaussian} for further details.} of the linear span of all features:
\begin{equation}
\nonumber
    \mathcal{H}_k := \overline{\left\{ f = \sum_{i=1}^N \alpha_i k(\cdot, x_i): \quad N \in \mathbb{N}, \quad (\alpha_i)_{i=1}^N \subset \mathbb{R}, \quad (x_i)_{i=1}^N \subset \mathbb{R}^d \right\}}^{\Vert \cdot \Vert_{\mathcal{H}_k}},
\end{equation}
that is, $\mathcal{H}_k$ contains (up to limits) linear combinations of the features $k(\cdot,x)$ that encode the points $x \in \mathbb{R}^d$.

\textbf{Kernel mean embeddings.} Augmenting \eqref{eq:feature map}, the \emph{kernel mean embedding} (KME) proposed by \citet{smola2007hilbert} provides a feature map that embeds elements from the space of probability distributions into $\mathcal{H}_k$:
\begin{align}
    \Phi_{k}:\mathcal{P}({\R}^{d}) &\rightarrow  \mathcal{H}_{k}, \notag\\
    \rho & \mapsto  \int_{\R^{d}} k(\cdot, x) \rho(\mathrm{d}x).\label{eq:def_KME}
\end{align} 
Clearly, $\Phi_k$ extends $\phi_k$, in the sense that $\Phi_k(\delta_x) = \phi_k(x)$, where $\delta_x$ denotes the Dirac measure centred at $x \in \mathbb{R}^d$. Furthermore, expectations with respect to $\rho \in \mathcal{P}(\mathbb{R}^d)$ can be computed using the KME and the inner product in $\mathcal{H}_k$, that is, $\mathbb{E}_{X \sim \rho}[f(X)] = \langle f, \Phi_k(\rho)\rangle_{\mathcal{H}_k}$, for $f \in \mathcal{H}_k$, assuming that the expectation is well defined.

A natural question with particular relevance for the present paper is the extend to which $\rho \in \mathcal{P}(\mathbb{R}^d)$ can be recovered from $\Phi_k(\rho) \in \mathcal{H}_k$. The amount of information preserved by the KME depends on the chosen kernel, and kernels that allow perfect reconstruction are referred to as \emph{characteristic} \citep{fukumizu2004dimensionality,fukumizu2007kernel,sriperumbudur2011universality}:
\begin{definition}[Characteristic kernels]
\label{def:characteristic}
A positive definite kernel $k: \mathbb{R}^d \times \mathbb{R}^d \rightarrow \mathbb{R}$ is called \emph{characteristic} if and only if the associated KME is injective, that is, if ${\Phi_k(\rho) = \Phi_k(\widetilde{\rho})}$ implies $\rho = \widetilde{\rho}$, for all $\rho,\widetilde{\rho} \in \mathcal{P}(\mathbb{R}^d)$.
\end{definition}
According to \cite{fukumizu2007kernel}, the Gaussian (or RBF) kernel
\begin{equation}
\label{eq:Gaussian kernel}
    k(x,y) = \exp \left(- \frac{\Vert x- y\Vert^2 }{2 \sigma^2}\right), \qquad x,y \in \mathbb{R}^d,
\end{equation}
is characteristic. On the other hand, the KME associated to the quadratic kernel $k_2$ defined in \eqref{eq:quadratic_kernel} retains information precisely about first and second order moments, and is thus closely connected to the Kalman methodology:
\begin{observation}[First and second moment matching]
\label{obs:Kalman}
For two probability measures $\rho,\widetilde{\rho} \in \mathcal{P}(\mathbb{R}^d)$, the following are equivalent: 
\begin{enumerate}
\item The KMEs induced by $k_2$ agree: $\Phi_{k_2}(\rho) = \Phi_{k_2}(\widetilde{\rho})$,
\item The measures $\rho$ and $\widetilde{\rho}$ have the same mean vectors and covariance matrices.
\end{enumerate}
In particular, $k_2$ is not characteristic.
\end{observation}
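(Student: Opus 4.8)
The plan is to compute the kernel mean embedding $\Phi_{k_2}(\rho)$ explicitly as a function and read off that its coefficients are precisely the first and second moments of $\rho$. First I would expand the kernel, writing $k_2(y,x) = (y^\top x + 1)^2 = (y^\top x)^2 + 2 y^\top x + 1$, and note the elementary identity $(y^\top x)^2 = y^\top (x x^\top) y$. Integrating against $\rho$ via the definition \eqref{eq:def_KME} then yields
\[
\Phi_{k_2}(\rho)(y) = y^\top M(\rho)\, y + 2 \mu(\rho)^\top y + 1, \qquad y \in \mathbb{R}^d,
\]
where $\mu(\rho) = \int x \,\mathrm{d}\rho$ is the mean and $M(\rho) = \int x x^\top \,\mathrm{d}\rho$ is the (symmetric) second-moment matrix of $\rho$.

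The key reduction is that equality in $\mathcal{H}_{k_2}$ coincides with pointwise equality of the associated functions: if $\Vert \Phi_{k_2}(\rho) - \Phi_{k_2}(\widetilde\rho)\Vert_{\mathcal{H}_{k_2}} = 0$, then by the reproducing property \eqref{eq:rep property} the two elements agree at every $y \in \mathbb{R}^d$. Hence statement (i) is equivalent to the identity of the two quadratic polynomials above for all $y$. Subtracting, the difference $y^\top(M(\rho) - M(\widetilde\rho)) y + 2(\mu(\rho) - \mu(\widetilde\rho))^\top y$ vanishes identically, and a polynomial in $y$ is identically zero only if all of its coefficients vanish; since $M(\rho) - M(\widetilde\rho)$ is symmetric, the quadratic part determines it fully with no loss from symmetrisation. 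I would thus conclude that (i) holds if and only if $\mu(\rho) = \mu(\widetilde\rho)$ and $M(\rho) = M(\widetilde\rho)$.

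Finally, to pass from the second-moment matrix to the covariance I would invoke the identity $\cov(\rho) = M(\rho) - \mu(\rho)\mu(\rho)^\top$. Given equal means, matching $M$ is equivalent to matching $\cov$, which upgrades the previous characterisation to statement (ii) and closes the chain of equivalences.

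The computation is elementary throughout; the only point demanding care is the coefficient-matching step, where one must confirm that the quadratic form $y \mapsto y^\top A y$ determines a symmetric matrix $A$ uniquely (automatic here because $M(\rho)$ is symmetric by construction) and that the constant and linear terms can be isolated, for instance via evaluation at $y = 0$ and directional derivatives. I do not anticipate any genuine obstacle beyond this bookkeeping.
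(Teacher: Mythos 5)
Your proof is correct, and it takes a genuinely different route from the paper's. The paper follows \cite[Example 3]{sriperumbudur2010hilbert} and computes the squared maximum mean discrepancy in closed form,
\begin{equation*}
\mathrm{MMD}^2_{k_2}(\rho,\widetilde{\rho}) = 2 \Vert \mu(\rho) - \mu(\widetilde{\rho}) \Vert^2 + \Vert \cov(\rho) - \cov(\widetilde{\rho}) + \mu(\rho)\mu(\rho)^\top - \mu(\widetilde{\rho})\mu(\widetilde{\rho})^\top \Vert^2_{F},
\end{equation*}
after which the equivalence is read off from the vanishing of the two non-negative summands. You instead compute the embedding itself as the quadratic polynomial $y \mapsto y^\top M(\rho) y + 2\mu(\rho)^\top y + 1$, use the reproducing property to identify equality in $\mathcal{H}_{k_2}$ with pointwise equality of functions (legitimate in both directions, since RKHS elements are genuine functions and point evaluation is continuous), and match coefficients. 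Your argument is more elementary and self-contained: it never needs the inner-product structure of $\mathcal{H}_{k_2}$ on quadratic polynomials, only the reproducing property and the uniqueness of coefficients of a polynomial with symmetric quadratic part. What the paper's computation buys in exchange is quantitative information --- an explicit formula for the MMD as a distance between moment pairs, not merely the characterisation of when it vanishes --- which is in the spirit of the surrounding discussion of the Kalman paradigm. The only bookkeeping items worth making explicit in your write-up are the standing integrability assumption (finite second moments of $\rho$ and $\widetilde{\rho}$, needed for the Bochner integral defining $\Phi_{k_2}$ and implicit in statement (ii) anyway) and the interchange of the Bochner integral with point evaluation, which is immediate here because point evaluation is a bounded functional on $\mathcal{H}_{k_2}$.
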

\begin{proof}
Following \citet[Example 3]{sriperumbudur2010hilbert}, this can be shown by directly computing the (squared) \emph{maximum mean discrepancy} 
\begin{align}
\label{eq:k2 calculation}
\nonumber
\mathrm{MMD}^2_{k_2}(\rho,\widetilde{\rho}) & := \Vert \Phi_{k_2}(\rho) - \Phi_{k_2}(\widetilde{\rho}) \Vert^2_{\mathcal{H}_{k_2}}
 = 2 \Vert \mu(\rho) - \mu(\widetilde{\rho}) \Vert^2 
\\
& + \Vert \textrm{Cov}(\rho) - \textrm{Cov}(\widetilde{\rho}) + \mu(\rho)\mu(\rho)^\top - \mu(\widetilde{\rho})\mu(\widetilde{\rho})^\top \Vert^2_{F},
\end{align}
where $\Vert \cdot \Vert_F$ denotes the Frobenius norm, $\mu(\rho)$, $\mu(\widetilde{\rho}) \in \mathbb{R}^d$ denote the means of $\rho$ and $\widetilde{\rho}$, and similarly $\textrm{Cov}(\rho)$ and $\textrm{Cov}(\widetilde{\rho})$ refer to the respective covariance matrices. Clearly, $\mathrm{MMD}^2_{k_2}(\rho,\widetilde{\rho}) = 0$ if and only if  $\Phi_{k_2}(\rho) = \Phi_{k_2}(\widetilde{\rho})$, which by \eqref{eq:k2 calculation} is also equivalent to $\mu(\rho) = \mu(\widetilde{\rho})$ together with $\textrm{Cov}(\rho) = \textrm{Cov}(\widetilde{\rho})$. The fact that different probability measures can have the same mean vectors and covariance matrices shows that $k_2$ is not characteristic.
\end{proof}

\section{RKHS embeddings for mean-field dynamics}
\label{sec:derivation}

\subsection{Constructing KME-dynamics}
\label{sec:construction}

As alluded to in the Introduction, we consider the task of replicating the curve of tempered distributions
\begin{equation}
\label{eq:dist curve}
\pi_t = \frac{e^{-th} \pi_0}{Z_t}, \qquad t \in [0,1],
\end{equation}
by the dynamics of a mean-field ordinary differential equation,
\begin{equation}
\label{eq:ODE}
\frac{\mathrm{d}X_t}{\mathrm{d}t} = v_t(X_t) + v^0_t(X_t), \qquad \qquad X_0 \sim \rho_0 = \pi_0.
\end{equation}
We think of \eqref{eq:ODE} as a parameterisation of the time-marginal distributions $\rho_t := \mathrm{Law}(X_t)$ through the (time-dependent) vector field $v_t$ (possibly itself depending on $\rho_t$). The additional offset $v_t^0$ will be beneficial when prior information on the target dynamics is available, for example when the prior and the likelihood are close to Gaussian (see Section \ref{sec:Lorenz} for an example), but $v^0_t \equiv 0$ is a viable choice. Notice that we have dropped the noise contribution from \eqref{eq:MF intro} by setting $\sigma_t \equiv 0$; allowing for a stochastic component is an interesting direction for future work (and has recently been considered by \citet{maurais2024sampling}).  

Building on Section \ref{sec:rkhs}, we aim to match $(\rho_t)_{t \in [0,1]}$ to $(\pi_t)_{t \in [0,1]}$ by equating their kernel mean embeddings $(\Phi_k(\rho_t))_{t \in [0,1]} \subset \mathcal{H}_k$ and $(\Phi_k(\pi_t))_{t \in [0,1]} \subset \mathcal{H}_k$, for an appropriate choice of positive definite kernel $k: \mathbb{R}^d \times \mathbb{R}^d \rightarrow \mathbb{R}$. Throughout, we work under the following assumption on the compatibility between the distributions defined by \eqref{eq:dist curve}-\eqref{eq:ODE} and the choice of kernel:
\begin{assumption}
\label{ass:k and pi}
The negative log-likelihood $h:\mathbb{R}^d \rightarrow \mathbb{R}$ is measurable, and the likelihood $e^{-h}$ is  $\pi_0$-integrable. The positive definite kernel $k: \mathbb{R}^d \times \mathbb{R}^d \rightarrow \mathbb{R}$ is twice continuously differentiable. Furthermore, $\mathbb{R}^d \ni x \mapsto k(\cdot, x) \in \mathcal{H}_k$ is $\pi_t$-Bochner-integrable for all $t \in [0,1]$, so that the KMEs $(\Phi_k(\pi_t))_{t \in [0,1]}$ are well defined. Finally, we assume that $t \mapsto \Phi_k(\pi_t)$ is differentiable, for all $t \in (0,1)$.  
\end{assumption}
Assumption \ref{ass:k and pi} is relatively mild; the integrability assumptions are satisfied for instance if $k$ has at most polynomial growth, and $\pi_t$ decays exponentially at infinity. 
As a first step towards matching \eqref{eq:ODE} to \eqref{eq:dist curve}, we compute  the evolution equations for the KMEs $\Phi_k(\pi_t)$ and $\Phi_k(\rho_t)$: 
\begin{lemma}[Evolution equations for KMEs]
\label{lem:KME ODE}
The kernel mean embeddings $\Phi_k(\pi_t)$ and $\Phi_k(\rho_t)$ satisfy the $\mathcal{H}_k$-valued ODEs
\begin{subequations}
\begin{align}
\label{eq:KME cov}
\partial_t \Phi_k(\pi_t) & = -{\cov}_{x \sim \pi_t} \left[ \phi_k(x), h(x) \right],
\\
\label{eq:KME dyn}
\partial_t \Phi_k(\rho_t) & = \mathbb{E}_{x \sim \rho_t} \left[ \nabla_x \phi_k(x) \cdot (v_t + v^0_t)(x)\right],
\end{align}
\end{subequations}
where $\phi_k$ denotes the feature map defined in \eqref{eq:feature map}, and the covariance is given by
\begin{equation}
\nonumber
{\cov}_{x \sim \pi_t} \left[ \phi_k(x), h(x) \right] =  \int_{\mathbb{R}^d} k(\cdot, x)  h(x) \pi_t(\mathrm{d}x) - \left( \int_{\mathbb{R}^d} k(\cdot,x) \pi_t( \mathrm{d}x)\right) \left(\int_{\mathbb{R}^d} h \, \mathrm{d}\pi_t \right).
\end{equation}
\end{lemma}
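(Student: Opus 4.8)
The plan is to treat the two identities separately, handling the prior-driven interpolation \eqref{eq:dist curve} through its explicit density, and the flow-driven marginal law $\rho_t$ through the characteristic curves of the ODE \eqref{eq:ODE}.

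For \eqref{eq:KME cov}, I would first differentiate the explicit density $\pi_t = e^{-th}\pi_0/Z_t$ in time. Using $Z_t = \int_{\mathbb{R}^d} e^{-th}\,\mathrm{d}\pi_0$ and the quotient rule, I would compute $\partial_t Z_t = -Z_t\,\mathbb{E}_{x \sim \pi_t}[h(x)]$, whence $\partial_t \pi_t = -\left(h - \mathbb{E}_{\pi_t}[h]\right)\pi_t$. Then, taking the KME $\Phi_k(\pi_t) = \int_{\mathbb{R}^d} k(\cdot,x)\,\pi_t(\mathrm{d}x)$ and exchanging $\partial_t$ with the ($\mathcal{H}_k$-valued, Bochner) integral, I would obtain $\partial_t \Phi_k(\pi_t) = -\int_{\mathbb{R}^d} k(\cdot,x)\big(h(x) - \mathbb{E}_{\pi_t}[h]\big)\pi_t(\mathrm{d}x)$. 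Recalling $\phi_k(x) = k(\cdot,x)$ and splitting the integral yields exactly $-\cov_{x \sim \pi_t}[\phi_k(x), h(x)]$ in the displayed form.

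For \eqref{eq:KME dyn}, I would adopt the Lagrangian viewpoint: writing $\Phi_k(\rho_t) = \mathbb{E}[k(\cdot, X_t)]$, where $X_t$ solves \eqref{eq:ODE} with $X_0 \sim \pi_0$, I would differentiate under the expectation and apply the chain rule pathwise, $\frac{\mathrm{d}}{\mathrm{d}t} k(\cdot, X_t) = \nabla_x k(\cdot, X_t)\cdot (v_t + v^0_t)(X_t)$, using that $k$ is $C^1$ in its second argument (Assumption \ref{ass:k and pi}). Re-expressing the expectation over the flow as an expectation under $\rho_t = \mathrm{Law}(X_t)$ then gives the claim. Alternatively, one could argue in the Eulerian frame via the continuity equation $\partial_t \rho_t + \nabla \cdot \big(\rho_t (v_t + v^0_t)\big) = 0$ and integrate by parts against $k(\cdot,x)$; I would prefer the pathwise route, since it avoids separately controlling boundary terms and the regularity of $\rho_t$.

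The main obstacle in both parts is justifying the interchange of the time derivative with the integral (respectively the expectation), i.e. differentiation under the Bochner integral sign in $\mathcal{H}_k$. I would discharge this by a dominated-convergence argument: Assumption \ref{ass:k and pi} already grants differentiability of $t \mapsto \Phi_k(\pi_t)$ and $\pi_t$-Bochner-integrability of $x \mapsto k(\cdot,x)$, and together with the $C^1$ regularity of $k$ and the at most polynomial growth control noted after the assumption, one obtains a $t$-uniform dominating function in $\mathcal{H}_k$-norm on compact time intervals. The remaining steps (the quotient rule for $\partial_t \pi_t$ and the covariance splitting) are then routine.
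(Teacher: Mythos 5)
Your proof of \eqref{eq:KME cov} is exactly the paper's: differentiate $Z_t$ to get $\partial_t Z_t = -Z_t \int h\,\mathrm{d}\pi_t$, deduce $\partial_t \pi_t = -(h - \int h\,\mathrm{d}\pi_t)\pi_t$, and push this through the Bochner integral defining the KME. For \eqref{eq:KME dyn} you take a genuinely different route: the paper argues in the Eulerian frame, writing down the continuity equation $\partial_t\rho_t + \nabla\cdot(\rho_t(v_t+v_t^0)) = 0$ and integrating by parts against $k(\cdot,x)$, whereas you differentiate $t \mapsto \mathbb{E}[k(\cdot,X_t)]$ pathwise along the characteristics. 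The two are equivalent, and your stated motivation is sound: the Lagrangian route sidesteps boundary terms and any regularity demands on the density of $\rho_t$, at the price of requiring the feature map $x \mapsto k(\cdot,x)$ to be differentiable \emph{as an $\mathcal{H}_k$-valued map} so that the chain rule $\tfrac{\mathrm{d}}{\mathrm{d}t}k(\cdot,X_t) = \nabla_x k(\cdot,X_t)\cdot \dot X_t$ holds in $\mathcal{H}_k$; this strictly needs existence and continuity of the mixed partials $\partial_{x_i}\partial_{y_i}k$ (cf.\ \cite[Lemma 4.34]{steinwart2008support}), slightly more than the ``$C^1$ in each argument'' of Assumption \ref{ass:k and pi}. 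Since the paper's own Eulerian computation also ends with $\nabla_x \phi_k(x)$ interpreted as an $\mathcal{H}_k$-valued gradient, this extra regularity is implicitly needed there too, so it is not a gap in your argument relative to the paper's. Your dominated-convergence justification for exchanging $\partial_t$ with the Bochner integral is a welcome addition that the paper leaves implicit.
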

\begin{proof}
Equation \eqref{eq:KME cov} follows by differentiating \eqref{eq:dist curve} and applying the KME defined in  \eqref{eq:def_KME}. Similarly, \eqref{eq:KME dyn} can be obtained using the continuity equation associated to \eqref{eq:ODE}. Detailed calculations can be found in Appendix \ref{app:calc}.  Note that we make use of the fact that derivatives of the feature map $\phi$ are contained in $\mathcal{H}_k$ according to \citet[Corollary 4.36]{steinwart2008support}.  
\end{proof}
\begin{remark}[Kernel Bayes' and Kalman rule]
\label{rem:kernel Bayes}
A discrete-time version of \eqref{eq:KME cov} reads
\begin{equation*}
\Phi_k(\pi_{t+ \Delta t}) = \Phi_k(\pi_t) - \Delta t \cdot {\cov}_{x \sim \pi_t} \left[ \phi_k(x), h(x) \right] + \mathcal{O}(\Delta t^2),    
\end{equation*}
which should be interpreted as an infinitesimal counterpart of the kernel Kalman rule \citep{gebhardt2017kernel,gebhardt2019kernel,sun2023adaptive}, in the context where the non-negative log-likelihood $h$ is available. It is also instructive to compare to the (uncentred) kernel Bayes' rule, see eq. (1.3) in \cite{klebanov2020rigorous}. 
\end{remark}
To proceed, we equate \eqref{eq:KME cov} and \eqref{eq:KME dyn}, obtaining an integral equation for $v_t$:
\begin{equation}
\label{eq:int eq}
-\int_{\mathbb{R}^d} \nabla_x k(\cdot,x) \cdot v_t(x) \rho_t(\mathrm{d}x) = \underbrace{ {\cov}_{x \sim \pi_t} \left[ \phi_k(x), h(x) \right] + \mathbb{E}_{x \sim \rho_t} \left[ \nabla_x \phi_k(x) \cdot  v^0_t(x)\right]}_{=: f_{t} \in \mathcal{H}_k}, 
\end{equation}
where we have introduced the short-hand notation $f_t$ for the right-hand side, which does not depend on $v_t$.
\begin{remark}[Kernelised continuity equation]
\label{rem:kernelised CE}
We refer to \eqref{eq:int eq} as the kernelised continuity equation, because it can be derived by treating the kernel as a test function integrated against the continuity equation $\partial_{t} \pi_{t} + \nabla \cdot (\pi_{t}v_{t})  = 0$, see also \cite{maurais2023adaptive,maurais2024sampling}. Specifically, by computing the time derivative for the tempered path \eqref{eq:dist curve} as $\partial_t \pi_{t} = -\pi_{t} \left(h - \int_{\R^{d}}h \, \mathrm{d} \pi_{t} \right)$ and substituting this time derivative into the continuity equation, we obtain 
\begin{equation}
\label{eq:continuity equation for linear path}
\nabla \cdot (\pi_{t}v_{t}) = \pi_{t} \left(h - \int_{\R^{d}}h \, \mathrm{d} \pi_{t} \right),
\end{equation}
see \cite{daum2010exact,reich2011dynamical,heng2021gibbs}.
The kernelised continuity equation emerges as a weak form of the continuity equation:
\begin{align}
& \nabla \cdot (\pi_{t} v_{t}) = \pi_{t}\left (h - \int_{\mathbb{R}^{d}}h \hspace{1mm} \mathrm{d}\pi_{t} \right ) \notag\\
\implies & \int_{\mathbb{R}^{d}}k(\cdot, x) \, \nabla \cdot (\pi_{t}(x) v_{t}(x)) \, \mathrm{d} x = \int_{\mathbb{R}^{d}}k(\cdot, x) , \pi_{t}(x) \left (h(x) - \int_{\mathbb{R}^{d}}h \hspace{1mm} \mathrm{d}\pi_{t} \right ) \mathrm{d} x \notag\\
\implies & -\int_{\mathbb{R}^d} \nabla_x k(\cdot,x) \cdot v_t(x) \pi_t(\mathrm{d}x) = {\cov}_{x \sim \pi_t} \left[ \phi_k(x), h(x) \right], \label{eq: IBP int eq}
\end{align}
where the last line is obtained from integration by parts.
From \eqref{eq: IBP int eq} to \eqref{eq:int eq}, we simply adjust $v_{t}$ to $v_{t} + v_{t}^{0}$, incorporating a correction term.
\end{remark}
Clearly, solutions to \eqref{eq:int eq} will not be unique: Given a solution $v_t$, the modification $v_t + w_t$ will solve \eqref{eq:int eq} as well, provided that $w_t$ is orthogonal to $\nabla_x k(\cdot,x)$ in $L^2(\rho_t)$.
To resolve this issue (and to obtain a symmetric form on the left-hand side in \eqref{eq:int eq}), we reparameterise $v_t$ through a scalar field $\alpha_t:\mathbb{R}^d \rightarrow \mathbb{R}$, 
\begin{equation}
\label{eq:ansatz v}
v_t(x)  = - C_t  \int_{\mathbb{R}^d} \nabla_x k(x ,y) \alpha_t(y) \rho_t(\mathrm{d}y),
\end{equation}
where $C_t \in \mathbb{R}^{d \times d}_{\mathrm{sym},>0}$ is a (possibly time-dependent) symmetric, positive definite matrix. The point of choosing the ansatz \eqref{eq:ansatz v} is the fact that \eqref{eq:int eq} then takes the form 
\begin{equation}
\label{eq:int G}
\int_{\mathbb{R}^d} G_{\rho_t,C_t} (\cdot,y)   \alpha_t(y)  \rho_t(\mathrm{d}y) = f_{t},
\end{equation}
as an integral equation for $\alpha_t$, 
where we have defined the new ($\rho$ and $C$-dependent) kernel
\begin{equation}
\label{eq:kernel_G}
G_{\rho,C}(x,y):= \int_{\R^{d}}\nabla_{z}k(x,z) \cdot C \nabla_{z}k(z,y)\rho(\di z),
\end{equation}
which is positive definite (see Lemma \ref{lem:G properties} below) due to the judicious (symmetrising) choice of $v_t$ in \eqref{eq:ansatz v}. To obtain a well-posed equation for $\alpha_t$ (and hence for $v_t$), we need the following modification:

\textbf{Tikhonov regularisation.}
Equation \eqref{eq:int G} is a prototypical example of an ill-posed inverse problem, see  \cite{engl1996regularization,kirsch2011introduction}. Indeed, since the operator ${\alpha \mapsto \int_{\mathbb{R}^d} G_{\rho,C} (\cdot,y)   \alpha(y)  \rho(\mathrm{d}y)}$ is compact in $L^2(\rho)$, solutions to \eqref{eq:int G} need not exist, and generically will be unstable with respect to small perturbations of the right-hand side. Therefore, we will need to consider the regularised version 
\begin{equation}
\label{eq:int G reg}
\int_{\mathbb{R}^d} G_{\rho_t,C_t} (\cdot,y)   \alpha_t(y)  \rho_t(\mathrm{d}y) + \varepsilon \alpha_t = f_{t},
\end{equation}
for a small parameter $\varepsilon > 0$. In the broader context of kernel mean embeddings for Bayesian inference, the occurrence of inverse problems is not surprising; typically the solution of an integral equation is required to recover the posterior from its KME (the ``outbedding'' or ``pre-image'' problem; see, for instance, \citet[Section 5.1.2]{gebhardt2019kernel}).

The following lemma justifies the ansatz in \eqref{eq:ansatz v} as well as the regularisation in \eqref{eq:int G reg}:
\begin{lemma}[Properties of $G_{\rho,C}$]
\label{lem:G properties}
Given $\rho \in \mathcal{P}(\mathbb{R}^d)$ and  $C \in \mathbb{R}^{d\times d}_{\mathrm{sym},>0}$, assume that the function ${z \mapsto \nabla_{z}k(x,z) \cdot C \nabla_{z}k(z,y)}$ is $\rho$-integrable, for all $x,y \in \mathbb{R}^d$, so that \eqref{eq:int G} is well defined.
Then the following holds:
\begin{enumerate}
\item  The kernel $G_{\rho,C}:\mathbb{R}^d \times \mathbb{R}^d \rightarrow \mathbb{R}$ is symmetric and positive definite.
\item For any $\varepsilon > 0$ and $f_t \in L^2(\rho_t)$, there exists a unique solution $\alpha_t \in L^2(\rho_t)$ to \eqref{eq:int G reg}.
\end{enumerate}
\end{lemma}
\begin{proof}
See Appendix \ref{app:calc}.
\end{proof}
\begin{remark}[Other types of regularisation]
Instead of \eqref{eq:int G reg}, we may consider the regularised equation  \begin{equation}
\label{eq:new reg}
\int_{\mathbb{R}^d} \left( G_{\rho_t,C_t} (\cdot,y) + \varepsilon k(\cdot,y)\right) \alpha_t(y)  \rho_t(\mathrm{d}y)  = f_{t},
\end{equation}
replacing the identity operator in \eqref{eq:int G reg} by the integral operator $\alpha \mapsto \int k(\cdot, y) \alpha(y) \rho(\mathrm{d}y)$. This form of regularisation is inspired by a variational formulation of regression type (see Section \ref{sec:diffusion maps}), and we have observed that numerically it behaves on par or slightly better than \eqref{eq:int G reg}. Remarkably, while \eqref{eq:new reg} on its own is still ill-posed (because the sum of two compact operators is compact), we show in Section \ref{sec:diffusion maps} that the combined system composed of \eqref{eq:ansatz v}  and \eqref{eq:new reg} is well posed for $v_t$. More systematic studies of regularisation schemes for \eqref{eq:int G} are deferred to future work. 
\end{remark}
Combining equations \eqref{eq:ansatz v} and \eqref{eq:int G reg}, we obtain a two-step procedure for finding an appropriate vector field $v_t$ in the dynamics \eqref{eq:ODE}: We first solve \eqref{eq:int G reg} for $\alpha_t$, and then compute $v_t$ from \eqref{eq:ansatz v}. Notice that the choice of $C_t$ is still arbitrary; in Section \ref{sec:quadratic Kalman} below, we suggest the covariance matrix $C_t = \cov{\rho_t}$, based on a comparison to the Kalman filter.

\textbf{Mean-field dynamics.} To close equation \eqref{eq:int G reg}, we replace $\pi_t$ by $\rho_t$ on the right-hand side, and collect the dynamical equations and those for $v_t$ as follows:
\begin{subequations}
\label{eq:MF dynamics}
\begin{align}
\label{eq:MF ODE}
\frac{\mathrm{d}X_t}{\mathrm{d}t} & = - C_t \int_{\mathbb{R}^d} \alpha_t(y) \nabla_{X_t} k(X_t,y) \, \mathrm{d}\rho_t(y) + v_t^0(X_t), \qquad X_0 \sim \rho_0,
\\
\label{eq:int MF}
\int_{\mathbb{R}^d} & G_{\rho_t,C_t}  (\cdot,y)   \alpha_t(y)  \rho_t(\mathrm{d}y) + \varepsilon \alpha_t   
= f_{t},
\end{align}
\end{subequations}
where the right-hand side of \eqref{eq:int MF} is given by
\begin{subequations}
\begin{align}
 f_{t}  & = {\cov}_{x \sim \rho_t} \left[ \phi_k(x), h(x) \right] + \mathbb{E}_{x \sim \rho_t} \left[ \nabla_x \phi_k(x) \cdot  v^0_t(x)\right]
 \\
 \label{eq:RHS linear system}
 & = \int_{\mathbb{R}^d} k(\cdot,x) \left( h(x) - \int_{\mathbb{R}^d} h \, \mathrm{d}\rho_t \right) \rho(\mathrm{d}x) + \int_{\mathbb{R}^d} \nabla_x k(\cdot,x) \cdot v_t^0(x) \rho_t(\mathrm{d}x).
 \end{align}
\end{subequations}
Notice that \eqref{eq:MF ODE} is a mean-field ODE for $X_t$, since the driving vector field on the right-hand side depends on the distribution $\rho_t = \mathrm{Law}(X_t)$. In particular, $\alpha_t$ depends on $\rho_t$ through \eqref{eq:int MF} since both the kernel $G_{\rho_t,C_t}$ and the right-hand side $f_{t}$ are $\rho_t$-dependent.  

\textbf{Interacting particle system.} It is straightforward to translate \eqref{eq:MF dynamics} into a coupled system of ODEs for an interacting particle system $(X_t^i)_{i=1}^N \subset \mathbb{R}^d$, making the approximation ${\rho_t \approx \tfrac{1}{N} \sum_{i=1}^N \delta_{X_t^i}}$:
\begin{subequations}
\label{eq:IPS}
\begin{align}
\label{eq:IPS ODE}
\frac{\mathrm{d}X_t^i}{\mathrm{d}t} & = -\tfrac{1}{N} C_t \sum_{j=1}^N \alpha^j_t \nabla_{X_t^i} k(X_t^i, X_t^j) + v_t^0(X_t^i), \\
\label{eq:IPS linear_system}
\tfrac{1}{N}&( \mathbf{G}_t  + \varepsilon I_{N\times N}) \boldsymbol{\alpha}_t  = \underbrace{  \mathbf{h}_t^k + \mathbf{v}_t^{0,k}}_{=:\mathbf{f}_t},
\end{align}
\end{subequations}
where $\boldsymbol{\alpha}_t = (\alpha_t^1,\ldots,\alpha_t^N) \in \mathbb{R}^N$ is a weight vector, $I_{N \times N} \in \mathbb{R}^{N \times N}$ refers to the identity matrix, and the entries of the matrix $\mathbf{G}_{t}$ are given by 
\begin{equation}
\label{eq: G mat}
(\mathbf{G}_t)_{ij} =  \tfrac{1}{N} \sum_{l=1}^N \nabla_{X_t^l}  k(X_t^i, X_t^l) \cdot C_t \nabla_{X_t^l} k(X_t^l,X_t^j), \qquad  \qquad i,j = 1, \ldots, N.
\end{equation}
The modified regularisation scheme from \eqref{eq:new reg} can be implemented by replacing $I_{N \times N}$ in \eqref{eq:IPS linear_system} with $\mathbf{K}_t \in \mathbb{R}^{N \times N}$, where $(\mathbf{K}_t)_{ij} = k(X_t^i,X_t^j)$.
Note that in passing from \eqref{eq:int MF} to \eqref{eq:IPS linear_system}, we have also applied the rescaling $\varepsilon \mapsto \varepsilon/N$, which is harmless since $\varepsilon$ is arbitrary at this point and will be specified by the user.
The right-hand side $\mathbf{f}_t = \mathbf{h}_t^k + \mathbf{v}_t^{0,k} \in \mathbb{R}^N$ of \eqref{eq:IPS linear_system} splits into the likelihood term
\begin{equation}
\label{eq:h vec}
(\mathbf{h}^k_t)_i = 
\tfrac{1}{N} \sum_{j=1}^N k(X_t^i,X_t^j) h(X_t^j) - \left(\tfrac{1}{N}\sum_{j=1}^N h(X_t^j)\right) \left( \tfrac{1}{N} \sum_{l=1}^N k(X_t^i,X_t^l)\right), \qquad i = 1,\ldots,N,
\end{equation}
and the correction term
\begin{equation}
\label{eq:cor vec}
(\mathbf{v}_t^{0,k})_i = \tfrac{1}{N}\sum_{j=1}^N \nabla_{X_t^j} k(X_t^i,X_t^j) \cdot v_t^0(X_t^j).
\end{equation}
We expect the dynamics of \eqref{eq:IPS} to be close to those of \eqref{eq:MF dynamics} in an appropriate sense, but a rigorous proof is beyond the scope of the current paper. Discretising \eqref{eq:IPS} in time using the Euler scheme leads to Algorithm \ref{alg:KME dynamics}, summarising the KME-dynamics approach to Bayesian inference.

\begin{algorithm}
\caption{KME-dynamics}
\label{alg:KME dynamics}
\begin{algorithmic}[1] 
\REQUIRE samples $(X^i_0)_{i=1}^N$ from $\pi_0$, negative log-likelihood $h:\mathbb{R}^d \rightarrow \mathbb{R}$, baseline vector field $v_t^0$, positive definite kernel $k: \mathbb{R}^d \times \mathbb{R}^d \rightarrow \mathbb{R}$, number of steps $N_{\mathrm{steps}}$, time-dependent matrix ${C_t \in \mathbb{R}^{d \times d}_{\mathrm{sym},>0}}$, regularisation parameter $\varepsilon$.
\OUTPUT \hspace{-0.5cm} approximate samples $(X^i_1)_{i=1}^N$ from $\pi_1$.
\STATE Set the step size $\Delta t \leftarrow 1/N_{\mathrm{steps}}$.
\FOR{$n=0,\ldots, N_{\mathrm{steps}}-1$}
\STATE Compute $\mathbf{G}_{n\Delta t}$, $\mathbf{h}_{n\Delta t}^k$ and  $\mathbf{v}_{n\Delta t}^{0,k}$ using \eqref{eq: G mat}, \eqref{eq:h vec} and \eqref{eq:cor vec}.
\STATE Solve the linear system \eqref{eq:IPS linear_system} for $\boldsymbol{\alpha}_{n\Delta t}$.
\FOR{$i=1,\ldots, N$}
\STATE $X_{(n+1)\Delta t}^i \leftarrow X_{n\Delta t}^i - \tfrac{\Delta t}{N} \cdot C_{n\Delta t} \sum_{j=1}^N \alpha_{n \Delta t}^j \nabla_{X_{n\Delta t}^i} k(X_{n \Delta t}^i, X_{n \Delta t}^j) + \Delta t \cdot v^0_{n\Delta t}(X_{n\Delta t}^i)$.
\ENDFOR
\ENDFOR
\end{algorithmic}
\end{algorithm}

\subsection{Quadratic kernels and the Kalman update}
\label{sec:quadratic Kalman}
In this section, we discuss the particular case when the Bayesian prior and the likelihood are both Gaussian, 
\begin{equation}
\label{eq:Kalman setting}
\pi_0 = \mathcal{N}(\mu_0, \Sigma_0), \qquad h(x) = \tfrac{1}{2} (Hx-\beta) \cdot R^{-1} (Hx-\beta),
\end{equation}
with prior mean $\mu_0 \in \mathbb{R}^d$ and prior covariance $\Sigma_0 \in \mathbb{R}^{d \times d}$, observation $\beta \in \mathbb{R}^{d'}$, observation operator $H \in \mathbb{R}^{d' \times d}$ and observation noise covariance $R \in \mathbb{R}^{d' \times d'}$, see, for example, \citet[Section 6.1]{reich2015probabilistic}. In this scenario, the posterior $\pi_1$ remains Gaussian, with mean and covariance available in closed form. As shown in \cite{bergemann2010localization,bergemann2010mollified,reich2011dynamical}, the Bayesian update can be implemented in an algorithmically robust way via the mean field Kalman-Bucy ODE
\begin{equation}
\label{eq:KB ODE}
\frac{\mathrm{d}X_t}{\mathrm{d}t} = -\tfrac{1}{2} \cov(\rho_t) H^\top R^{-1}(H X_t + \mu(\rho_t) - 2\beta), \qquad X_0 \sim \pi_0;
\end{equation}
in particular, the solution to \eqref{eq:KB ODE} satisfies $X_1 \sim \pi_1$. In \eqref{eq:KB ODE}, we use $\rho_t = \mathrm{Law} (X_t)$ for the associated distribution, and $\mu(\rho_t)$ for the corresponding mean.  

As suggested by Observation \ref{obs:Kalman} and the construction in Section \ref{sec:construction} based on \eqref{eq:abstract embedding}, there should be a close connection between the KME-dynamics \eqref{eq:MF dynamics} and the Kalman-Bucy ODE \eqref{eq:KB ODE} for the quadratic kernel $k_2$ defined in \eqref{eq:quadratic_kernel}. The following result shows that  indeed the KME-dynamics \eqref{eq:MF dynamics} reduces to \eqref{eq:KB ODE} if we set $C_t$ to be the covariance matrix of $\rho_t$.
\begin{proposition}[From KME-dynamics to Kalman-Bucy]\label{pro:ReproducingKF}
Assume that $\pi_0$ and $h$ are given by \eqref{eq:Kalman setting}, and furthermore that
\begin{equation*}
k(x,y) = (x^\top y + 1)^2, \qquad C_t = \cov(\rho_t).    
\end{equation*}
Then the evolution equations \eqref{eq:MF dynamics} and \eqref{eq:KB ODE}, and hence their solutions, coincide, for $\varepsilon \rightarrow 0$. 
\end{proposition}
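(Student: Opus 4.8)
The plan is to first set $v^0_t\equiv 0$ (this is needed for \eqref{eq:MF dynamics} to reduce to the offset-free field \eqref{eq:KB ODE}), so that $f_t = {\cov}_{x\sim\rho_t}[\phi_k(x),h(x)]$. I would then specialise the kernel: since $\nabla_x k(x,y) = 2(x^\top y + 1)y$ for $k(x,y)=(x^\top y+1)^2$, the ansatz \eqref{eq:ansatz v} produces a field that is \emph{affine} in $x$, namely $v_t(x) = -2C_t\left(M_t x + b_t\right)$ with $M_t = \int \alpha_t(y)\,yy^\top\rho_t(\mathrm{d}y)$ and $b_t = \int \alpha_t(y)\,y\,\rho_t(\mathrm{d}y)$. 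The Kalman--Bucy field in \eqref{eq:KB ODE} is also affine in $x$, and $C_t = \cov(\rho_t)$ is invertible, so the whole proposition reduces to showing that, in the limit $\varepsilon\to 0$, the coefficients coincide, i.e. $M_t = \tfrac14 H^\top R^{-1}H$ and $b_t$ matches the constant term of \eqref{eq:KB ODE}.

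Rather than solving the regularised system \eqref{eq:int G reg} for $\alpha_t$ directly, I would verify that the Kalman--Bucy field itself solves the (unregularised) matching equation \eqref{eq:int eq}. Pairing \eqref{eq:int eq} with $k(\cdot,w)$ and using the reproducing property turns it into the scalar identity $\mathbb{E}_{\rho_t}\!\left[\nabla_x k(w,X)\cdot v_t(X)\right] = -{\cov}_{\rho_t}[k(w,\cdot),h]$, required for all $w$. The left-hand side is $\tfrac{\mathrm d}{\mathrm dt}\mathbb{E}_{\rho_t}[k(w,X)]$ along the flow; since $k(w,\cdot)$ is a polynomial of degree at most two and an affine field keeps $\rho_t$ Gaussian, this derivative depends only on $\dot\mu(\rho_t)$ and $\dot{\cov}(\rho_t)$. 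A short Gaussian computation (Stein's identity for the quadratic $h$) shows that the Kalman--Bucy field generates exactly $\dot\mu(\rho_t) = -\cov(\rho_t)H^\top R^{-1}(H\mu(\rho_t)-\beta)$ and $\dot{\cov}(\rho_t) = -\cov(\rho_t)H^\top R^{-1}H\,\cov(\rho_t)$, which are precisely the tempering rates $-{\cov}_{\rho_t}[x,h]$ and its second-moment analogue appearing on the right. Hence the Kalman--Bucy field reproduces the KME evolution \eqref{eq:KME cov}, in line with Observation \ref{obs:Kalman}.

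Next I would address representability and the $\varepsilon\to 0$ selection. Writing $\mathcal{T}_{\rho_t}$ for the integral operator $\alpha\mapsto\int G_{\rho_t,C_t}(\cdot,y)\alpha(y)\rho_t(\mathrm{d}y)$ of \eqref{eq:int G}, solving $\mathcal{T}_{\rho_t}\alpha = f_t$ with the ansatz is equivalent to \eqref{eq:int eq}; choosing $\alpha_t$ to be a suitable degree-$\le 2$ polynomial realises any prescribed $(M_t,b_t)$ because the moment map is onto for a non-degenerate Gaussian $\rho_t$, so $f_t\in\operatorname{range}(\mathcal{T}_{\rho_t})$. By Lemma \ref{lem:G properties} the Tikhonov solution $\alpha_t^\varepsilon = (\mathcal{T}_{\rho_t}+\varepsilon I)^{-1}f_t$ is unique, and as $\varepsilon\to 0$ it converges to a genuine solution. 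The crucial observation is that, although $\alpha_t$ itself is non-unique (the equation is ill-posed), the \emph{induced field} $v_t$ is unique exactly when $C_t = \cov(\rho_t)$: any $\alpha_0\in\ker\mathcal{T}_{\rho_t}$ gives an affine field whose induced mean and covariance rates vanish, and feeding $A_0\Sigma + \Sigma A_0^\top = 0$ and $\mathbb{E}[v]=0$ through $C_t = \Sigma = \cov(\rho_t)\succ 0$ forces $M_0 = 0$ and $b_0 = 0$, i.e. the zero field. Therefore $v_t^\varepsilon \to v^{\mathrm{KB}}$ and \eqref{eq:MF dynamics} collapses onto \eqref{eq:KB ODE}.

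I expect the main obstacle to be precisely this $\varepsilon\to 0$ analysis of the ill-posed equation \eqref{eq:int G reg}: one must simultaneously establish that $f_t$ lies in the finite-dimensional range of the finite-rank operator $\mathcal{T}_{\rho_t}$ (solvability in the limit) and that the resulting vector field is insensitive to the non-uniqueness of $\alpha_t$. The choice $C_t = \cov(\rho_t)$ is exactly what collapses $\ker\mathcal{T}_{\rho_t}$ to zero fields and makes the limit well defined; a generic $C_t$ need not select the Kalman--Bucy representative. By comparison, the Gaussian moment algebra underlying the second paragraph, while somewhat lengthy, is routine.
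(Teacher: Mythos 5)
Your proposal is correct in substance, but it takes a genuinely different route from the paper. The paper's proof is a direct computation: it expands both sides of the integral equation \eqref{eq:int MF} for the quadratic kernel, evaluates all the resulting Gaussian moments up to fourth order via Isserlis-type identities (Lemma \ref{le:moments_normal}), matches coefficients in the free variable $y$, and solves the resulting matrix system explicitly for the moments $\Psi = -\mathbb{E}[zz^\top\alpha_t(z)]$ and $\eta = -\mathbb{E}[z\,\alpha_t(z)]$ of the weight function, from which the Kalman--Bucy field drops out. You instead avoid solving for $\alpha_t$ altogether: you verify that the Kalman--Bucy field satisfies the unregularised matching equation \eqref{eq:int eq} by reducing it (via the reproducing property and the degree-two structure of $k_2$) to the first- and second-moment ODEs of the tempering path --- essentially combining Observation \ref{obs:Kalman} with the known exactness of the Kalman--Bucy flow --- and then argue that the $\varepsilon\to 0$ Tikhonov limit must produce this field because $f_t$ lies in the range of the finite-rank operator and kernel elements induce the zero vector field. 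What your approach buys is a conceptually cleaner argument that sidesteps the fourth-moment algebra and, importantly, an honest treatment of the $\varepsilon\to 0$ limit and of the non-uniqueness of $\alpha_t$, which the paper's proof glosses over by simply setting $\varepsilon=0$. What the paper's computation buys is an explicit closed form for $\Psi$ and $\eta$. One imprecision worth fixing: the induced field of any $\alpha_0\in\ker\mathcal{T}_{\rho_t}$ vanishes for \emph{every} symmetric positive definite $C_t$, since $\langle\alpha_0,\mathcal{T}_{\rho_t}\alpha_0\rangle_{L^2(\rho_t)} = \Vert v_{\alpha_0}\Vert^2_{(L^2(\rho_t))^d,C_t} = 0$; so uniqueness of the induced field is not what singles out $C_t=\cov(\rho_t)$. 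The actual role of that choice is \emph{representability}: the ansatz \eqref{eq:ansatz v} with $k_2$ produces fields $-2C_t(M_tx+b_t)$ with $M_t$ necessarily symmetric, and the Kalman--Bucy field has this form (with $M_t=\tfrac14 H^\top R^{-1}H$) precisely when $C_t^{-1}\cov(\rho_t)H^\top R^{-1}H$ is symmetric, which $C_t=\cov(\rho_t)$ guarantees. With that correction, your argument is complete.
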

\begin{proof}
The proof proceeds by direct (although somewhat lengthy) calculation, see Appendix \ref{app:calc}.    
\end{proof}
\begin{remark}[$\varepsilon \rightarrow 0$]
Following the discussion in Section \ref{sec:construction}, the integral equation \eqref{eq:int G reg} is ill-posed for $\varepsilon = 0$. However, for the quadratic kernel $k_2$, the range of the integral operator ${\alpha \mapsto \int_{\mathbb{R}^d} G_{\rho,C} (\cdot,y)   \alpha(y)  \rho(\mathrm{d}y)}$ is finite dimensional, and so the solution to \eqref{eq:int G} can be interpreted in terms of the generalised Moore-Penrose inverse \citep[Section 2.1]{engl1996regularization}. As $\varepsilon \rightarrow 0$, the solution to \eqref{eq:int G reg} converges towards this limit, and the statement of Proposition \ref{pro:ReproducingKF} should be understood in this sense.
\end{remark}
\begin{remark}
It is worth stressing that the relationship between KME-dynamics and Kalman-Bucy dynamics does not extend to the respective interacting particle systems  (given by \eqref{eq:IPS} for the KME-dynamics and by replacing the mean and covariance in  \eqref{eq:KB ODE} by their estimator versions). Indeed, the proof of Proposition \ref{pro:ReproducingKF} rests on identities between higher-order  moments of Gaussian measures (see Lemma \ref{le:moments_normal}), which are satisfied only approximately for the empirical measures. A more fine-grained exploration of the finite-sample properties of KME-dynamics is left for future work.
\end{remark}
Based on Proposition \ref{pro:ReproducingKF} and the well-documented good performance of Kalman-type methods, we suggest implementing the KME interacting particle system \eqref{eq:IPS} with the choice
\begin{equation}
\label{eq:ensemble covariance}
C_t = \frac{1}{N-1} \sum_{i=1}^N (X_t^i - \overline{X}_t)(X_t^i - \overline{X}_t)^\top,
\end{equation}
the empirical covariance matrix for the ensemble $(X_t^i)_{i=1}^n$. For a numerical comparison between \eqref{eq:ensemble covariance} and $C_t = I_{d \times d}$, we refer to Section \ref{sec:experiment_estimate_NC}, in particular Figure \ref{fig:Bench_d}. 

In settings with Gaussian likelihood, it is furthermore reasonable to use \eqref{eq:KB ODE} as a baseline dynamics; we hence advertise
\begin{equation}
\label{eq:Kalman baseline}
v_t^0(x) = v_{\mathrm{Kalman}}(x) := -\tfrac{1}{2} \cov(\rho_t) H^\top R^{-1}(H x + \mu(\rho_t) - 2\beta),    
\end{equation}
refer to the corresponding interacting particle system as \emph{Kalman-adjusted KME-dynamics},
and evaluate its empirical performance in Section \ref{sec:experiments}. Similar hybrid schemes (combining the Kalman filter with an asymptotically exact method, such as the bootstrap particle filter) have been suggested before \citep{chustagulprom2016hybrid,frei2013bridging,rammelmuller2024adaptive}, but it is generally challenging to retain asymptotic exactness of the hybrid method. In contrast, Kalman-adjusted KME-dynamics is expected to recover the exact posterior when $N \rightarrow \infty$ and $\varepsilon \rightarrow 0$.
\begin{remark}[Affine invariance] The choice $C_t = \cov(\rho_t)$ is not sufficient to make the KME-dynamics \eqref{eq:MF dynamics} affine-invariant in the sense of \cite{goodman2010ensemble,chen2023gradient,garbuno2020affine}; this would require appropriate scaling of the kernel $k$. 
\end{remark}
\subsection{Estimating the score}\label{sec:Score estimation}

In recent years, encoding probability measures $\pi \in \mathcal{P}(\mathbb{R}^d)$ in terms of their scores $\nabla \log \pi$ has become a central theme in computational statistics and machine learning \citep{hyvarinen2005estimation,song2021score,ho2020denoising}. Here, our motivation is twofold: Firstly, we would like to expand on the idea that KME-dynamics estimates the score implicitly, as already alluded to in the Introduction (see the discussion around equation \eqref{eq:IBP_ KEM for score estimation}). Secondly, we propose a KME-based estimator for $\nabla \log \pi$ given representative samples. Apart from the general interest in score estimators, this is also a prerequisite for Section \ref{sec:importance sampling scheme}, where we develop an importance weighting scheme to enhance the accuracy of KME-dynamics.

To begin, we recall the kernel embedding of the score from equation \eqref{eq:IBP_ KEM for score estimation}:
\begin{equation}
\label{eq:IBP_ KME for score estimation _main body}
-\int_{\R^{d}} \nabla_{x}k(\cdot,x) \cdot \nabla_{x} \log \pi(x) \pi(\mathrm{d} x) = \int_{\R^{d}} \Delta_x k(\cdot,x) \pi(\mathrm{d} x).
\end{equation}
The relation \eqref{eq:IBP_ KME for score estimation _main body}  is a kernelised version 
of
\begin{equation}
\label{eq:stationary FKP}
\nabla \cdot (\pi \nabla \log \pi) = \Delta \pi,   
\end{equation}
which itself can be interpreted in two ways: First, it is the stationary
Fokker-Planck equation for the overdamped Langevin diffusion 
\begin{equation}
\label{eq:stochastic ODE}
\mathrm{d}X_t = \nabla \log\pi(X_t)\,\mathrm{d}t + \sqrt{2}\,\mathrm{d}W_{t},
\end{equation}
governing the equilibrium steady state (set $\partial_t \pi = 0$, cf. \citet[Chapter 4]{pavliotis2016stochastic}). Secondly, \eqref{eq:stationary FKP} describes the equivalence in law of the Brownian motion $\mathrm{d}X_t =  \sqrt{2}  \, \mathrm{d}W_t$ and the probability flow ODE $\mathrm{d}X_t =  -\nabla \log \pi_t(X_t) \, \mathrm{d}t$, see  \cite{song2021score,chen2024probability}. Both viewpoints are  naturally connected to the main thread of this work, matching a dynamical evolution to a fixed curve of distributions: Indeed, the task of estimating the score $\nabla \log \pi$ can be rephrased as determining the drift of an overdamped Langevin diffusion at equilibrium, or as finding an ODE that reproduces the law of Brownian motion with a given initial distribution. Hence, we can follow Section \ref{sec:construction} and choose the approximation $\nabla \log \pi(x) \approx -C\int_{\R^{d}}\nabla_{x}k(x, y) \alpha(y) \pi(\mathrm{d}y)$  as in \eqref{eq:ansatz v}, with $C \in \mathbb{R}_{\mathrm{sym},>0}^{d \times d}$ an arbitrary symmetric positive definite matrix. Substituting this into \eqref{eq:IBP_ KME for score estimation _main body} and following the reasoning from Section \ref{sec:construction}, we obtain the estimator
\begin{subequations}
\label{eq:score_estimation}
\begin{align}
\label{eq:score_representation}
\widehat{\nabla \log \pi}(x) & = - C \int_{\mathbb{R}^d} \alpha(y) \nabla_{x} k(x,y) \,\pi(\mathrm{d} y),\\
\label{eq:int score estimation}
\int_{\mathbb{R}^d} G_{\pi,C} (\cdot,y) \alpha(y) \pi(\mathrm{d}y) + \varepsilon \alpha &= \int_{\R^{d}} \Delta_y k(\cdot,y) \pi( \mathrm{d} y),
\end{align}
\end{subequations}
where we recall the definition of $G_{\pi,C}$ from \eqref{eq:kernel_G}.

\textbf{Finite sample approximation.} Practically (for instance in Section \ref{sec:importance sampling scheme} below), $\pi$ is often approximated by weighted samples 
\begin{equation}
\label{eq:weighted pi}
\pi \approx \frac{\sum_{i=1}^{N}w^{i}\delta_{X^{i}}}{\sum_{j=1}^{N}w^{j}},
\end{equation}
where $w^{i} \geq 0$ are unnormalised weights assigned to the samples $(X^{i})_{i=1}^N$. For notational convenience, we denote by $W^{i}:= \frac{w^{i}}{\sum_{j=1}^{N}w^{j}}$ the normalised weights, ensuring that ${\sum_{i=1}^{N}W^{i} = 1}$. Using \eqref{eq:weighted pi}, 
it is straightforward to translate \eqref{eq:score_estimation} into a finite sample estimator,
\begin{equation}
\label{eq:score estimator}
\widehat{\nabla \log \pi}(x)  = - \tfrac{1}{N}C\sum_{j=1}^{N} \alpha^{j} \nabla_{x} k(x,X^{j}), \qquad \qquad 
\tfrac{1}{N}(\mathbf{G} + \varepsilon I_{N\times N})\boldsymbol{\alpha} = \boldsymbol{\psi},
\end{equation}
where the entries of the matrix $\mathbf{G}$ are given by 
\begin{equation}
\label{eq: G for score estimation}
\mathbf{G}_{ij} =  \sum_{l=1}^N W^{l} \nabla_{X^l}  k(X^i, X^l) \cdot C \nabla_{X^l} k(X^l,X^j), \qquad  \qquad i,j = 1, \ldots, N,
\end{equation}
and the entries of the vector $\boldsymbol{\psi}$ are given by
\begin{equation}
\label{eq: psi for score estimation}
    \boldsymbol{\psi}_{i} = \sum_{l=1}^{N}W^{l}\Delta_{X^{l}}k(X^{i}, X^{l}), \qquad \qquad i = 1,\ldots, N.
\end{equation}
\begin{remark}[Other score estimators]
As shown by \cite{hyvarinen2005estimation,vincent2011connection}, the Fisher information between two measures $\rho$ and $\pi$ can be expressed as
\begin{equation}
\label{eq:Fisher}
 \frac{1}{2} \int_{\mathbb{R}^d} |\nabla \log \rho |^2 \, \mathrm{d}\pi + \int_{\mathbb{R}^d} \nabla \cdot \nabla \log \rho \, \mathrm{d}\pi + \text{const.},\end{equation}
 and thus the score $\nabla \log \pi$ can be approximated by (i) parameterising $\nabla \log \rho$ via a neural network, (ii) using a sample based approximation of \eqref{eq:Fisher}, and (iii) minimising the ensuing objective by gradient descent type methods. Our kernel-based estimator \eqref{eq:score estimator} can be derived from \eqref{eq:Fisher} by approximating $\nabla \log \pi(x) \approx -C\int_{\R^{d}}\nabla_{x}k(x, y) \alpha(y) \pi(\mathrm{d}y)$ and analytically computing the minimiser in $\alpha$. Different kernel-based score estimators have been proposed by
\cite{batz2016variational,li2018gradient,maoutsa2020interacting,sriperumbudur2017density,shi2018spectral}, and we leave a detailed comparison for future work.
\end{remark}
\begin{remark}[Kernel mean embeddings for score estimation] In the case when $k$ is translation invariant, i.e., $k(x, y) = g(x - y)$ for some real-valued function $g$, we have that $\Delta_x k$ is a positive definite kernel function. In this case the right hand side of \eqref{eq:IBP_ KME for score estimation _main body} is the kernel mean embedding of $\pi$ with respect to the kernel $\Delta_x k$.
\end{remark}
\subsection{Weighted KME-dynamics}\label{sec:importance sampling scheme}
Numerically, the finite sample size and the regularisation introduced in \eqref{eq:int G reg} inevitably lead to errors in computing the velocity field, resulting in at least minor discrepancies between the transported measures $\rho_{t}$ and the targets $\pi_{t}$ as defined in \eqref{eq:dist curve}. In this section, we propose an importance sampling scheme for KME-dynamics that assigns weights to the samples, attempting to correct said discrepancies. We refer the reader to 
\cite{bunch2016approximations} and \citet[Section 4.2]{heng2021gibbs} for similar ideas in the context of different reference dynamics.

To capture the mismatch between $\rho_t$ and $\pi_t$, we introduce
\begin{equation*}
  w_{t} := \frac{e^{-th}\pi_{0}}{\rho_{t}} = \frac{Z_{t}\pi_{t}}{\rho_{t}} \qquad \text{and} \qquad w_t^i:= w_t(X_t^i),  
\end{equation*}
the unnormalised weight function and corresponding individual particle weights, respectively. Expectations with respect to $\pi_t$ can then be estimated via
\begin{equation}
\label{eq:reweighting}
\int_{\mathbb{R}^d} f \, \mathrm{d} \pi_t = \int_{\mathbb{R}^d} f \frac{w_t}{Z_t} \, \mathrm{d}\rho_t \approx \sum_{i=1}^N  \frac{w_t^i f(X_t^i)}{\sum_{j=1}^N w_t^j}, 
\end{equation}
using samples $(X_t^i)_{i=1}^N$ from $\rho_t$. To shorten the notation, we denote the velocity field on the right-hand side of \eqref{eq:MF ODE} by $u_t$, that is, we assume that the particle system $(X^i_t)_{i=1}^N$ is governed by 
\begin{equation}
\label{eq:ODE weighting}
\frac{\mathrm{d}X_t^i}{\mathrm{d}t} = u_t(X_t^i), \qquad \qquad i=1,\ldots,N,  
\end{equation}
and the density 
$\rho_t$ satisfies the continuity equation
\begin{equation}
\label{eq:continuity equation for flow}
\partial_{t}\rho_{t} + \nabla \cdot (\rho_{t} u_{t}) = 0.
\end{equation}
The evolution of the particle weights can now be computed as follows:
\begin{lemma}[Flow of the unnormalised weights] \label{lem:flow of weight}
The total derivative of $\log w_{t}^{i}$ with respect to $t$ is given by
\begin{equation}
    \label{eq:flow of weight}
    \frac{\mathrm{d}\log w_{t}^{i}}{\mathrm{d}t} = \nabla \log \pi_{t}(X_{t}^{i}) \cdot u_{t}(X_{t}^{i}) + \nabla \cdot u_{t}(X_{t}^{i}) - h(X_{t}^{i}).
\end{equation}
\end{lemma}
\begin{proof}
    The proof proceeds by straightforward calculation. Details can be found in Appendix \ref{app:calc}. 
\end{proof}
Following Lemma \ref{lem:flow of weight} and using an Euler approximation, we can update the particle weights as follows,
\begin{equation}
    \label{eq:Euler scheme for logw}
    \log w_{t+\Delta t}^{i} = \log w_{t}^{i} + \Delta t \left (\nabla \log \pi_{t}(X_{t}^{i}) \cdot u_{t}(X_{t}^{i}) + \nabla \cdot u_{t}(X_{t}^{i}) - h(X_{t}^{i}) \right),
\end{equation}
or, equivalently, 
\begin{equation}
\label{eq:Sequential computation for weight flow}
    w_{t+\Delta t}(X_{t+\Delta t}^{i}) = w_{t}(X_{t}^{i}) \, \exp\left\{\Delta t \left(\nabla \log \pi_{t}(X_{t}^{i}) \cdot u_{t}(X_{t}^{i}) + \nabla \cdot u_{t}(X_{t}^{i}) - h(X_{t}^{i})\right)\right\},
\end{equation}
with initialisation $w_{0}^{i} = 1$, for all $i = 1,\ldots,N$. To complete the scheme, notice that $\nabla \cdot u_t$ can be computed in closed form thanks to the kernel ansatz in \eqref{eq:ansatz v},
    \begin{equation}
    \label{eq:divergence of v}
    \nabla_{x} \cdot u_{t}(x) = - \int_{\mathbb{R}^d} \alpha_t(y) \sum_{m,n=1}^{d}(C_t)_{mn} \left(\mathrm{Hess}_{x} k(x, y)\right)_{mn} \, \mathrm{d}\rho_t(y) + \nabla_{x}\cdot v_t^0(x),
\end{equation}
with finite sample estimator
\begin{equation}
\label{eq:estimator of divergence of v}
    \widehat{\nabla_{x} \cdot u_{t}} (x) = -\frac{1}{N} \sum_{m,n=1}^{d} \sum_{j=1}^N \alpha^j_t (C_t)_{mn} \left(\mathrm{Hess}_{x} k(x, X_t^j)\right)_{mn} + \nabla_{x}\cdot v_t^0(x),
\end{equation}
and that the score $\nabla \log \pi_t$ can be estimated as in Section \ref{sec:Score estimation} (where we use the available weights in \eqref{eq: G for score estimation} and \eqref{eq: psi for score estimation}). We summarise the method derived in this way in Algorithm \ref{alg:weighted KME dynamics}.
\begin{remark}[Estimating normalising constants]
From $Z_t = \int_{\mathbb{R}^d} \exp(-t h) \, \mathrm{d}\pi_t$, it is straightforward to see that the normalising constant satisfies
\begin{equation*}
\partial_t \log Z_t = - \int_{\mathbb{R}^d} h \, \mathrm{d}\pi_t,
\end{equation*}
leading to the estimator
\begin{equation}
\label{eq:estimator of normalising constant outside IS scheme}
{Z}_{1} = \exp\left(- \int_{0}^{1}\int_{\R^{d}} h \, \mathrm{d} \pi_{t} \, \mathrm{d} t \right) \approx \exp\left(- \frac{1}{N} \int_{0}^{1}  \sum_{i=1}^N h(X_{t}^i)\, \mathrm{d} t \right) =: \widehat{Z}^{\mathrm{plain}}_1,
\end{equation}
based on the unweighted samples $(X_t^i)_{i=1}^N$. On the other hand, \eqref{eq:reweighting} suggests using
\begin{equation}
\label{eq:estimator of normalising constant within IS scheme}
\widehat{Z}^{\mathrm{weighted}}_{1} := \frac{1}{N}\sum_{i=1}^{N}w_{1}^{i}.
\end{equation}
Both estimator are related as follows: In the case of perfect matching, $\rho_t = \pi_t$, the continuity equation \eqref{eq:continuity equation for linear path}, with $v$ replaced by $u$, implies 
\begin{equation*}
\nabla \log \pi_t \cdot u_t + \nabla \cdot u_t = h - \int h \, \mathrm{d}\pi_t, 
\end{equation*}
and so Lemma \ref{lem:flow of weight} leads to the weight update
\begin{equation}
\label{eq:flow of weight outside IS scheme}
\frac{\mathrm{d} \log w_{t}^{i}}{\mathrm{d}t} = -\int_{\R^{d}}h \,\mathrm{d}\pi_{t}.
\end{equation}
Combining \eqref{eq:flow of weight outside IS scheme} with \eqref{eq:estimator of normalising constant within IS scheme} leads back to the plain estimator \eqref{eq:estimator of normalising constant outside IS scheme}. A numerical comparison between $\widehat{Z}_1^{\mathrm{plain}}$ and $\widehat{Z}_1^{\mathrm{weighted}}$ is performed in Section \ref{sec:experiment_estimate_NC}.
\end{remark}
\begin{algorithm}
\caption{Weighted KME-dynamics}
\label{alg:weighted KME dynamics}
\begin{algorithmic}[1] 
\REQUIRE samples $(X^i_{n\Delta t})_{i=1}^N$ and the corresponding vector field $(u^i_{n\Delta t})_{i=1}^N$ for $n = 0,\ldots, N_{\mathrm{steps}}$ computed from KME-dynamics, negative log-likelihood $h:\mathbb{R}^d \rightarrow \mathbb{R}$, positive definite kernel $k: \mathbb{R}^d \times \mathbb{R}^d \rightarrow \mathbb{R}$, time-dependent matrix ${C_t \in \mathbb{R}^{d \times d}_{\mathrm{sym},>0}}$, regularisation parameter $\varepsilon > 0$.
\OUTPUT \hspace{-0.5cm} unnormalised weights for samples $(w^i_{n\Delta t})_{i=1}^N$ and the corresponding normalised weights $(W^i_{n\Delta t})_{i=1}^N$, for all $n = 0,\ldots, N_{\mathrm{steps}}$.
\STATE Initialise the unnormalised weights as $w_{0}^{i} = 1$, and the normalised weights as $W_{0}^{i} = 1/N$ for all $i=1,\ldots N$.
\FOR{$n=0,\ldots, N_{\mathrm{steps}}-1$}
\STATE Compute $\mathbf{G}_{n\Delta t}$ and $\boldsymbol{\psi}_{n\Delta t}$ using \eqref{eq: G for score estimation} and \eqref{eq: psi for score estimation}.
\STATE Solve the linear system in \eqref{eq:score estimator} for $\boldsymbol{\alpha}_{n\Delta t}$.
\FOR{$i=1,\ldots, N$}
\STATE Compute the score estimators $\widehat{\nabla \log \pi_{n\Delta t}}(X_{n\Delta t}^{i})$ using the estimator in \eqref{eq:score estimator}.
\STATE Compute the estimator of the divergence of the vector field $\widehat{\nabla_{x} \cdot u_{t}}(X_{n\Delta t}^{i})$ using \eqref{eq:estimator of divergence of v}.
\STATE $w_{(n+1)\Delta t}^{i} \leftarrow w_{n\Delta t}^{i} \, \exp\left\{\Delta t \left(\widehat{\nabla \log \pi_{n\Delta t}}(X_{n\Delta t}^{i}) \cdot u^i_{n\Delta t} + \widehat{\nabla_{x} \cdot u_{t}}(X_{n\Delta t}^{i}) - h(X_{n\Delta t}^{i})\right)\right\}$.
\ENDFOR
\STATE Normalise the weights as $W_{(n+1)\Delta t}^{i} = \frac{w_{(n+1)\Delta t}^{i}}{\sum_{i=1}^{N}w_{(n+1)\Delta t}^{i}}$.
\ENDFOR
\end{algorithmic}
\end{algorithm}
\section{Variational formulation and kernelised diffusion maps}
\label{sec:diffusion maps}
In this section, we provide further insight into the KME mean-field dynamics \eqref{eq:MF dynamics} by giving a variational principle for the vector field on the right-hand side of \eqref{eq:MF ODE}, where \eqref{eq:int MF} needs to be replaced by the alternative regularisation in \eqref{eq:new reg}. Perhaps this theoretical insight speaks in favour of \eqref{eq:new reg}, but we leave a more extensive and systematic exploration for future work. For simplicity, we will only consider the case $v^0_t \equiv 0$. To set the stage, we define the norm
\begin{equation*}
 \Vert v \Vert^2_{(L^2(\rho))^d,C} := \int_{\mathbb{R}^d} v \cdot C^{-1} v \, \mathrm{d} \rho  
\end{equation*}
on the space of vector fields $(L^2(\rho))^d$, where $C \in \mathbb{R}_{\mathrm{sym},>0}^{d \times d}$ is a fixed symmetric positive definite matrix. We also recall the \emph{maximum mean discrepancy} (see \citet[Section 3.5]{muandet2017kernel} or \citet{smola2007hilbert})
\begin{equation*}
\mathrm{MMD}_k^2 (\rho,\widetilde{\rho})  := \Vert \Phi_k(\rho) - \Phi_k(\widetilde{\rho}) \Vert^2_{\mathcal{H}_k} = \int_{\mathbb{R}^d} \int_{\mathbb{R}^d} k(x,y) (\rho(\mathrm{d}x) - \widetilde{\rho}(\mathrm{d}x)) (\rho(\mathrm{d}y) - \widetilde{\rho}(\mathrm{d}y)
)     
\end{equation*}
between two signed measures $\rho$ and $\widetilde{\rho}$, which should be interpreted as measuring the distance between $\rho$ and $\widetilde{\rho}$ through the embedding $\Phi_k$ and the RKHS-norm $\Vert \cdot \Vert_{\mathcal{H}_k}$. In the following result, we interpret the ODE $\tfrac{\mathrm{d}X_t}{\mathrm{d}t} = v_t(X_t)$ as giving rise to the signed measure $\partial_t \rho_t^v$: The change of the density describing the law of $X_t$ is induced by the vector field $v_t$, and we have $\partial_t \rho_t^ v = -\nabla \cdot(\rho_t v_t)$ according to the continuity equation.\footnote{Notice that $\rho_t^v$ and $v_t$ need to be regular enough so that $-\nabla \cdot(\rho_t^v v_t)$ can be interpreted as a signed measure.} 
\begin{proposition}
\label{prop:regression}
Assume that $k:\mathbb{R}^d \times \mathbb{R}^d \rightarrow \mathbb{R}$ is bounded and twice continuously differentiable, with bounded derivatives. Fix $\varepsilon > 0$ and $t \in (0,1)$, and assume that $\rho_t \in \mathcal{P}(\mathbb{R}^d)$ admits a continuously differentiable density with respect to the Lebesgue measure (so that $\partial_t \rho_t^v = -\nabla \cdot (\rho^v_t v)$ can be interpreted as a signed measure, for smooth $v$), and let $h \in L^2(\rho_t)$. Then the functional 
\begin{equation}
\label{eq:MMD functional}
J_{\varepsilon,t}(v) := \mathrm{MMD}_k^2(\partial_t \rho^v_t, \partial_t \pi_t) + \varepsilon \Vert v \Vert^2_{L^2(\rho_t),C_t}
\end{equation}  admits a unique minimiser $v^* \in (L^2(\rho_t))^d$ which coincides with the KME vector field with regularisation \eqref{eq:new reg}: for 
\begin{equation}
\label{eq:v star}
v^* = 
-C_t \nabla \int_{\mathbb{R}^d} \alpha_t(y) k(\cdot,y) \, \mathrm{d}\rho_t(y),
\end{equation}
with $\alpha_t$ solving \eqref{eq:new reg}, we have that $J_{\varepsilon,t}(v^*) \le J_{\varepsilon,t}(v)$, for all $v \in (L^2(\rho_t))^d$.
\end{proposition}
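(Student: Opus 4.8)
The plan is to recognise $J_{\varepsilon,t}$ as a Tikhonov-regularised quadratic functional on $(L^2(\rho_t))^d$ and to characterise its minimiser through the associated Euler--Lagrange equation. First I would rewrite the discrepancy term using Lemma \ref{lem:KME ODE}. The continuity equation gives $\Phi_k(\partial_t \rho_t^v) = \E_{x \sim \rho_t}[\nabla_x \phi_k(x) \cdot v(x)] =: A v$, a bounded linear map $A:(L^2(\rho_t))^d \to \mathcal{H}_k$, while \eqref{eq:KME cov} gives $\Phi_k(\partial_t \pi_t) = -{\cov}_{x \sim \pi_t}\left[\phi_k(x), h(x)\right] =: -b$. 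Hence the discrepancy becomes $\mathrm{MMD}_k^2(\partial_t \rho_t^v, \partial_t \pi_t) = \Vert A v + b \Vert_{\mathcal{H}_k}^2$, so that
\[
J_{\varepsilon,t}(v) = \Vert A v + b \Vert_{\mathcal{H}_k}^2 + \varepsilon \Vert v \Vert_{(L^2(\rho_t))^d,\, C}^2 .
\]

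Existence and uniqueness of the minimiser then follow by the direct method: the map $v \mapsto \Vert A v + b \Vert^2$ is convex and continuous (boundedness of $A$ being secured by the boundedness of $k$ and its derivatives), while for $\varepsilon > 0$ the penalty $\varepsilon \Vert v \Vert_{(L^2(\rho_t))^d,\, C}^2$ is strongly convex and coercive. Strong convexity yields a unique minimiser $v^*$, characterised by stationarity.

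Next I would compute the first-order condition. The Gateaux derivative of $J_{\varepsilon,t}$ at $v^*$ in direction $w$ equals $2\langle A v^* + b, A w\rangle_{\mathcal{H}_k} + 2\varepsilon \int v^* \cdot C^{-1} w \, \mathrm{d}\rho_t$, and the key tool is the adjoint $A^*:\mathcal{H}_k \to (L^2(\rho_t))^d$. Differentiating the reproducing property $\langle g, k(\cdot,x)\rangle_{\mathcal{H}_k} = g(x)$ shows $\langle g, \nabla_x k(\cdot,x)\rangle_{\mathcal{H}_k} = \nabla g(x)$, whence $A^* g = \nabla g$. Stationarity therefore reads $\nabla(A v^* + b) + \varepsilon C^{-1} v^* = 0$ in $(L^2(\rho_t))^d$, i.e. $v^* = -C\nabla u^*$ with potential $u^* := \varepsilon^{-1}(A v^* + b) \in \mathcal{H}_k$.

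Finally I would identify the explicit form \eqref{eq:v star}. Since $v^*$ enters $J_{\varepsilon,t}$ only through its $\rho_t$-a.e. values, and $A^* g = \nabla g$ vanishes $\rho_t$-a.e. whenever $g \perp \overline{\mathrm{span}}\{k(\cdot,y): y \in \mathrm{supp}\,\rho_t\}$, a representer-type argument allows the potential to be taken as a weighted embedding $u^* = \int_{\mathbb{R}^d} \alpha_t(y) k(\cdot,y)\,\mathrm{d}\rho_t(y)$, which is precisely \eqref{eq:v star}. Substituting $v^* = -C\nabla u^*$ into $A v^*$ and invoking the definition \eqref{eq:kernel_G} collapses the resulting double integral, giving $A v^* = -\int_{\mathbb{R}^d} G_{\rho_t,C}(\cdot,y)\alpha_t(y)\,\mathrm{d}\rho_t(y)$; feeding this back into the stationarity relation produces the integral equation for $\alpha_t$, whose unique solvability is guaranteed by the positive-definiteness of $G_{\rho_t,C}$ from Lemma \ref{lem:G properties}. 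I expect the main obstacle to be exactly this last identification: carefully tracking how the $C$-weighted Tikhonov penalty $\varepsilon\Vert v\Vert_C^2$ on the vector field transforms under the change of variables $v \mapsto \alpha_t$, and controlling the integration constant left undetermined by $\nabla$, so that the stationarity equation is reconciled with the regularisation and right-hand side of \eqref{eq:int MF} exactly.
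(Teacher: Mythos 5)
Your overall route is the same as the paper's: both proofs recast $J_{\varepsilon,t}$ as a Tikhonov functional for the bounded operator sending $v$ to the embedded divergence $\int_{\mathbb{R}^d}\nabla_x k(\cdot,x)\cdot v(x)\,\rho_t(\mathrm{d}x)\in\mathcal{H}_k$, obtain existence and uniqueness from strict convexity, and read off the minimiser from the normal equations. (The paper factors this operator as $(Ai)^* = i^*A^*$ through $L^2(\rho_t)$ and invokes \cite[Theorem 2.11]{kirsch2011introduction} together with the push-through identity, which is exactly your Euler--Lagrange computation packaged as a citation; it also verifies the identity $\Vert i^*(A^*v-\overline{h})\Vert^2_{\mathcal{H}_k}=\mathrm{MMD}_k^2(\partial_t\rho_t^v,\partial_t\pi_t)$ by an explicit double-integral and integration-by-parts argument rather than via Lemma \ref{lem:KME ODE}.) Three small slips: with respect to the weighted inner product $\langle\cdot,\cdot\rangle_{(L^2(\rho_t))^d,C}$ the adjoint of your $A$ is $g\mapsto C\nabla g$, not $\nabla g$ (your stationarity equation is nevertheless correct after multiplying by $C^{-1}$); there is no undetermined integration constant, since the potential is pinned down as $u^*=\varepsilon^{-1}(Av^*+b)$; and matching the right-hand side of \eqref{eq:int MF} requires the closure $\pi_t\to\rho_t$ in your $b$, which the paper performs silently.

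The genuine gap is the final identification, which you correctly flag but do not close. Your stationarity condition says that $v^*=-C_t\nabla u^*$ where $u^*\in\mathcal{H}_k$ solves $(\varepsilon+AA^*)u^*=f_t$ \emph{in} $\mathcal{H}_k$. Two things then need checking: (i) that $u^*$ lies in the range of the embedding $\alpha\mapsto\int_{\mathbb{R}^d}\alpha(y)k(\cdot,y)\,\rho_t(\mathrm{d}y)$ --- a dense but in general proper subspace of $\mathcal{H}_k$, so the ``representer-type argument'' needs to be spelled out; and (ii) that the resulting equation for $\alpha_t$ is \eqref{eq:int MF}. Point (ii) is where the reconciliation you anticipate actually bites: substituting $u^*=\int\alpha_t(y)k(\cdot,y)\,\mathrm{d}\rho_t(y)$ into the stationarity relation and using \eqref{eq:kernel_G} yields $\int G_{\rho_t,C_t}(\cdot,y)\alpha_t(y)\,\rho_t(\mathrm{d}y)+\varepsilon\int k(\cdot,y)\alpha_t(y)\,\rho_t(\mathrm{d}y)=f_t$, in which the regulariser is $\varepsilon$ times the identity on $\mathcal{H}_k$ applied to the potential, whereas \eqref{eq:int MF} regularises with $\varepsilon\alpha_t$, i.e.\ $\varepsilon$ times the identity on $L^2(\rho_t)$. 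The two equations differ at order $\varepsilon$, so what your argument delivers is \eqref{eq:v star} with $\alpha_t$ solving the former equation, not literally \eqref{eq:int MF}. The paper's own proof passes over this point equally quickly (it asserts the identification of \eqref{eq:regression} with \eqref{eq:v star} and \eqref{eq:int MF} via the operator correspondences of Section \ref{sec:diffusion maps}), so you have located a real subtlety rather than merely an omission of your own; but as it stands your argument, like the paper's, establishes the proposition only modulo this discrepancy in the regularisation term.
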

\begin{proof}
See Appendix \ref{app:calc}. The proof gives a regularised regression type interpretation to \eqref{eq:MMD functional} and identifies \eqref{eq:MF ODE} together with \eqref{eq:new reg} as the corresponding normal equations.
\end{proof}
Proposition \ref{prop:regression} gives precise meaning to the claim that KME-dynamics matches the curves $(\rho_t)_{t \in [0,1]}$ and $(\pi_t)_{t \in [0,1]}$ according to their kernel mean embeddings. More generally, it draws a connection between statistical estimation and regression, transport and interacting particle systems. For further directions in this recently emerging field, we would like to point the reader towards \cite{chewi2024statistical}.
\begin{remark}[Discrete-time interpretation] Assuming  perfect matching $\rho_t = \pi_t$ at fixed $t \in (0,1)$,  and approximating $\partial_t \rho_t^v \approx \tfrac{1}{\Delta t}(\rho^v_{t + \Delta t} - \rho_t)$ and $\partial_t \pi_t \approx \tfrac{1}{\Delta t}(\pi_{t + \Delta t} - \pi_t)$, the functional \eqref{eq:MMD functional} takes the form
\begin{equation*}
J_{\varepsilon,t}(v) \approx \tfrac{1}{\Delta t^2}\mathrm{MMD}_k^2(\rho^v_{t + \Delta t}, \pi_{t + \Delta t}) + \varepsilon \Vert v \Vert^2_{L^2(\rho_t),C_t}. 
\end{equation*}
Therefore, KME-dynamics \eqref{eq:MF dynamics} can be thought of as matching $(\rho_t)_{t \in [0,1]}$ to $(\pi_t)_{t \in [0,1]}$ according to maximum mean discrepancy, with an $L^2$-regularisation on the velocity. For ${C_t = I_{d \times d}}$, this regularisation is in the spirit of the Benamou-Brenier dynamical reformulation of (quadratic) optimal transport \citep{benamou2000computational}, linking to the viewpoint on KME-dynamics (or kernelised Fisher-Rao flow) put forward by \cite{maurais2023adaptive,maurais2024sampling}. It is worth pointing out that similar variational formulations are available for the Kalman-Bucy vector field \eqref{eq:Kalman baseline}, see \citet[Section 3.6]{reich2013ensemble}, and for the conditional mean embedding in the kernel Bayes' rule \citep{GrunewalderLGBPP12}.  
\end{remark}
\begin{remark}
The integral operator 
\begin{equation}
\label{eq:new reg integral op}
\alpha 
\mapsto \int_{\mathbb{R}^d} \left( G_{\rho_t,C_t} (\cdot,y) + \varepsilon k(\cdot,y)\right) \alpha_t(y)  \rho_t(\mathrm{d}y)
\end{equation}
is compact in $L^2(\rho_t)$, and therefore the equation \eqref{eq:new reg} is ill-posed \citep{engl1996regularization,kirsch2011introduction}. However, composing the (unbounded) inverse of  \eqref{eq:new reg integral op} with the integral operator in \eqref{eq:v star} overall results in a bounded operator (see the proof of Proposition \ref{prop:regression} in Appendix \ref{app:calc}), and therefore $v^*$ in Proposition \ref{prop:regression} is well defined.
\end{remark}

\textbf{The Poisson equation and infinitesimal generators of diffusions.}  In the remainder of this section, we connect the variational formulation from Proposition \ref{prop:regression} to other related work, in particular to the kernelised diffusion maps introduced by \cite{pillaud2023kernelized}. We would like to highlight the fact that kernelised diffusion maps aim at solving the same problem as conventional diffusion maps \citep{coifman2006diffusion}, but the construction is vastly different (despite the name). Conventional diffusion maps have been used to construct interacting particle systems for filtering, generative modeling and sampling, see, for example, \cite{li2023diffusion,pathiraja2021analysis,taghvaei2018kalman}.

In a nutshell, the kernelised diffusion maps perspective shifts the focus from embeddings of probability measures to embeddings of differential operators. 
We start by recalling a fundamental connection between the interpolation \eqref{eq:dist curve} and second-order elliptic partial differential equations:

To any symmetric positive definite matrix $C \in \mathbb{R}^{d \times d}_{\mathrm{\sym},>0}$ and probability measure $\pi\in \mathcal{P}(\mathbb{R}^d)$ with strictly positive and differentiable Lebesgue density, we associate the operator
\begin{equation*}
\mathcal{L}_{\pi,C}  \phi := \frac{1}{\pi} \nabla \cdot (\pi C \nabla \phi), \qquad \phi \in C_c^{\infty}(\mathbb{R}^d), 
\end{equation*}
extended to an unbounded operator acting on $L^2(\pi)$. It is straightforward to check that $-\mathcal{L}_{\pi,C}$ is positive semi-definite and essentially self-adjoint, and we furthermore assume that $\mathcal{L}_{\pi,C}$ admits a spectral gap (see \citet[Chapter 4]{pavliotis2016stochastic} for an in-depth discussion). 
\begin{remark} We may allow $C$ to be position dependent (also in \eqref{eq:MF dynamics} and \eqref{eq:IPS}), but will not pursue this direction in this paper. The operator $\mathcal{L}_{\pi,C}$ is the infinitesimal generator of the diffusion
\begin{equation}
\label{eq:aldi}
\mathrm{d}X_t =  C(X_t) \nabla \log \pi (X_t) \, \mathrm{d}t + (\nabla \cdot C)(X_t) \, \mathrm{d}t + \sqrt{2C(X_t)} \, \mathrm{d}W_t, 
\end{equation}
see \citet[Section 2.3]{pavliotis2016stochastic}, which may be interpreted as overdamped Langevin diffusion on a Riemannian manifold with metric tensor $C^{-1}$; see, for instance, \cite{livingstone2014information}. In the case when $C$ is the (empirical) covariance matrix, \eqref{eq:aldi} is connected to affine-invariant approaches to Langevin sampling \citep{garbuno2020interacting,garbuno2020affine}.
\end{remark}
The following is a well-known result: In order to match the interpolation \eqref{eq:dist curve}, it it sufficient to solve linear elliptic PDEs involving $\mathcal{L}_{\pi,C}$ and $h$.
\begin{lemma}[The Poisson equation for replicating mean-field dynamics]
\label{lem:Poisson}
Assume \\that ${\phi \in C^2([0,1] \times \mathbb{R}^d; \mathbb{R})}$ solves the Poisson equation 
\begin{equation}
\label{eq:Poisson}
\mathcal{L}_{\pi_t,C_t} \phi_t = h - \int_{\mathbb{R}^d} h \, \mathrm{d}\pi_t,
\end{equation}
for all $t \in [0,1]$, and that the mean-field ODE
\begin{equation}
\label{eq:Poisson ODE}
\frac{\mathrm{d}X_t}{\mathrm{d}t} = C_t \nabla \phi_t(X_t), \qquad X_0 \sim \pi_0     
\end{equation}
admits a unique solution. Then the law of $X_t$ reproduces the interpolation $(\pi_t)_{t \in [0,1]}$, that is, $\mathrm{Law}(X_t) = \pi_t$, for all $t \in [0,1]$. 
\end{lemma}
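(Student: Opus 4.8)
The plan is to show that the reference curve $\pi_t$ itself solves the continuity (Liouville) equation associated with the ODE \eqref{eq:Poisson ODE}, and then to invoke uniqueness of solutions to that equation. Writing $v_t(x) := -C_t \nabla \phi_t(x)$ for the driving velocity field and $\rho_t := \mathrm{Law}(X_t)$, the continuity equation for the flow generated by \eqref{eq:Poisson ODE} reads
\[
\partial_t \rho_t = -\nabla \cdot (\rho_t v_t) = \nabla \cdot (\rho_t C_t \nabla \phi_t), \qquad \rho_0 = \pi_0.
\]
Since $\phi \in C^2$ and $C_t$ is symmetric positive definite, $v_t$ is continuously differentiable in space; combined with the assumed unique solvability of \eqref{eq:Poisson ODE}, the flow map $\Psi_t$ of $v_t$ is well defined and $\rho_t = (\Psi_t)_\# \pi_0$ is the unique solution of this linear transport PDE with initial datum $\pi_0$.

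The core of the argument is then to verify that $\pi_t$ solves the \emph{same} PDE with the \emph{same} initial condition. First I would rewrite the right-hand side through the definition of $\mathcal{L}_{\pi_t,C_t}$: by construction
\[
\nabla \cdot (\pi_t C_t \nabla \phi_t) = \pi_t \cdot \tfrac{1}{\pi_t}\nabla \cdot (\pi_t C_t \nabla \phi_t) = \pi_t \, \mathcal{L}_{\pi_t,C_t}\phi_t,
\]
and then substitute the Poisson equation \eqref{eq:Poisson}, yielding $\mathcal{L}_{\pi_t,C_t}\phi_t = -h + \int_{\mathbb{R}^d} h \, \mathrm{d}\pi_t$. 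Hence the right-hand side equals $\pi_t\bigl(-h + \int_{\mathbb{R}^d} h \, \mathrm{d}\pi_t\bigr)$.

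It remains to compute $\partial_t \pi_t$ directly from \eqref{eq:dist curve}. Differentiating $\pi_t = e^{-th}\pi_0 / Z_t$ in time and using $\partial_t Z_t = -\int_{\mathbb{R}^d} h \, e^{-th}\,\mathrm{d}\pi_0 = -Z_t \int_{\mathbb{R}^d} h \, \mathrm{d}\pi_t$, I would obtain
\[
\partial_t \pi_t = -h\,\pi_t + \pi_t \int_{\mathbb{R}^d} h \, \mathrm{d}\pi_t,
\]
which coincides exactly with the right-hand side derived above (this is the density-level computation already underlying \eqref{eq:KME cov} in Lemma \ref{lem:KME ODE}). Since the initial conditions agree, both $\pi_t$ and $\rho_t$ solve the same continuity equation with the same datum, and uniqueness forces $\rho_t = \pi_t$ for all $t \in [0,1]$.

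I expect the only genuine subtlety to lie in the uniqueness step for the continuity equation rather than in the algebra: one must ensure the velocity field $v_t = -C_t\nabla\phi_t$ is regular enough (e.g.\ locally Lipschitz with controlled growth) for the push-forward of the flow to be the unique weak solution. The stated hypothesis that \eqref{eq:Poisson ODE} admits a unique solution is precisely what underwrites this, so once the regularity is made explicit the matching computation closes the proof.
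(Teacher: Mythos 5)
Your argument is correct and follows exactly the route the paper indicates (the paper only cites \cite{reich2011dynamical,heng2021gibbs} and sketches "differentiate \eqref{eq:dist curve} and use the continuity equation for \eqref{eq:Poisson ODE}"): you compute $\partial_t \pi_t = -h\pi_t + \pi_t \int h\,\mathrm{d}\pi_t$, substitute the Poisson equation to identify this with $-\nabla\cdot(\pi_t v_t)$, and conclude by uniqueness of the continuity equation with the shared initial datum. Your closing remark correctly isolates the only real subtlety (regularity of $v_t$ underwriting uniqueness), which the lemma's hypotheses are designed to cover.
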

\begin{proof}
The proof can for instance be found in \cite{daum2010exact},\cite{reich2011dynamical} or \cite{heng2021gibbs}, and proceeds by differentiating \eqref{eq:dist curve} and using the continuity equation for \eqref{eq:Poisson ODE}. See also Remark \ref{rem:kernelised CE}.
\end{proof}
In \cite{maurais2023adaptive,maurais2024sampling}, the authors recently derived the KME-dynamics \eqref{eq:MF dynamics} (with $C_t = I_{N \times N}$ and $v_t^0 = 0$) starting from the Poisson equation \eqref{eq:Poisson} and assuming that $\phi \in \mathcal{H}_k$. In what follows, we present a related approach, based on \cite{pillaud2023kernelized}, that connects directly to the formulation in \eqref{eq:MMD functional}.

\textbf{Kernelising $\mathcal{L_{\pi,C}}$.} In keeping with the general construction principle from Section \ref{sec:construction}, and motivated by Lemma \ref{lem:Poisson}, it is plausible to ``embed'' the operator $\mathcal{L}_{\pi,C}$ into an RKHS, and solve an embedded version of the Poisson equation \eqref{eq:Poisson}. To spell this out, assume that the positive definite kernel $k$ is bounded, with bounded first and second order derivatives, $k \in C_b^2(\mathbb{R}^d \times \mathbb{R}^d; \mathbb{R})$. Under these conditions, $\mathcal{H}_k$ is continuously embedded in $L^2(\pi)$, see \citet[Theorem 4.26]{steinwart2008support}, and we denote the canonical inclusion by  $i: \mathcal{H}_k \hookrightarrow L^2(\pi)$.\footnote{This inclusion is essentially the identity, $i(f) = f$. However, note that $f \in \mathcal{H}_k$ is indeed a function, whereas $i(f) \in L^2(\pi)$ is an equivalence class (under equality up to a $\pi$-null set.)} Following the same reference, the adjoint  $i^* : L^2(\pi) \rightarrow \mathcal{H}_k$, characterised by $$\langle f, i g \rangle_{L^2(\pi)} = \langle i^*f, g \rangle_{\mathcal{H}_k}, \qquad \text{for all} \qquad  f \in L^2(\pi), g \in \mathcal{H}_k,$$ is given by the integral operator
\begin{equation*}
i^* f = \int_{\mathbb{R}^d} k(\cdot, x) f(x) \pi(\mathrm{d}x).
\end{equation*}
Following \citet{pillaud2023kernelized}, we define the \emph{kernelisation} of $\mathcal{L}_{\pi,C}$ by 
\begin{equation*}
\mathcal{L}^k_{\pi,C} := i^* \mathcal{L}_{\pi,C} i,
\end{equation*}
which, using integration by parts, can be expressed as
\begin{equation*}
\mathcal{L}^k_{\pi,C} f = - \int_{\mathbb{R}^d} \nabla_x k(\cdot, x) \cdot C \nabla f(x) \pi(\mathrm{d}x), \qquad f \in \mathcal{H}_k,
\end{equation*}
or, for $f = i^* \alpha = \int_{\mathbb{R}^d}k(\cdot, y) \alpha (y) \pi(\mathrm{d}y)$ as
\begin{equation}
\label{eq:diffusion map KME}
\mathcal{L}_{\pi,C}^k i^* \alpha = - \int_{\mathbb{R}^d} G_{\pi,C}(\cdot,y) \alpha(y) \pi(\mathrm{d}y). 
\end{equation}
The last identity, relating the kernelised diffusion operator $\mathcal{L}^k_{\pi,C}$ to the positive definite kernel $G_{\pi,C}$ defined in \eqref{eq:kernel_G}, provides the main connection between kernelised diffusion maps and KME-dynamics. 

The operator $\mathcal{L}^k_{\pi,C}$ acts on $\mathcal{H}_k$ and is self-adjoint, since $\mathcal{L}_{\pi,C}$ is self-adjoint in $(L^2(\pi),\langle \cdot,\cdot\rangle_{L^2(\pi)})$, see \citet[Section 4.6]{pavliotis2016stochastic}. The following key lemma, due to \citet[Proposition 3]{pillaud2023kernelized}, shows that the inverse of  $\mathcal{L}_{\pi,C}$ (and hence solutions to the Poisson equation \eqref{eq:Poisson}) can be obtained from the inverse of its kernelisation $\mathcal{L}^k_{\pi,C}$.  
\begin{lemma}[\cite{pillaud2023kernelized}, Proposition 3.3]
\label{lem:inversion}
Under the assumption that  ${k \in C_b^2(\mathbb{R}^d \times \mathbb{R}^d; \mathbb{R})}$ and that $\mathcal{L}_{\pi,C}$ admits an $L^2(\pi)$-spectral gap, we have that  
\begin{equation}
\label{eq:inversion formula}
\mathcal{L}_{\pi,C}^{-1} = i (\mathcal{L}^k_{\pi,C})^{-1}i^*, \end{equation}
where \eqref{eq:inversion formula} holds on the centred Sobolev space 
\[
H_0^1(\pi) = \left\{f \in L^2(\pi): \int_{\mathbb{R}^d} \Vert \nabla f\Vert^2 < \infty, \,\, \int_{\mathbb{R}^d} f \, \mathrm{d}\pi = 0 \right\}.
\]
\end{lemma}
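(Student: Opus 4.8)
The plan is to reduce the claimed operator identity to a single transparent statement about the range of a composed operator, exploiting that $-\mathcal{L}_{\pi,C}$ is self-adjoint and positive. Write $L^2_0(\pi) = \{ f \in L^2(\pi) : \int f \, \mathrm{d}\pi = 0\}$ for the mean-zero functions (the orthogonal complement of the constants, which lie in the kernel of $\mathcal{L}_{\pi,C}$), on which $-\mathcal{L}_{\pi,C}$ acts. The spectral-gap assumption means precisely that $-\mathcal{L}_{\pi,C}$ is bounded below on $L^2_0(\pi)$, so by the functional calculus its square root $B := (-\mathcal{L}_{\pi,C})^{1/2}$ is self-adjoint and positive, invertible on $L^2_0(\pi)$ with bounded inverse $B^{-1}$, and $(-\mathcal{L}_{\pi,C})^{-1} = B^{-1}B^{-1}$. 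Since the two sign factors cancel, \eqref{eq:inversion formula} is equivalent to $(-\mathcal{L}_{\pi,C})^{-1} = i(-\mathcal{L}^k_{\pi,C})^{-1} i^*$, and the whole proof amounts to showing that this right-hand side equals $B^{-2}$.

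Next I would rewrite the kernelisation as a Gram operator. As $B$ is self-adjoint we have $i = B^{-1}(Bi)$ and $i^* = (Bi)^* B^{-1}$; setting $J := B\,i : \mathcal{H}_k \to L^2_0(\pi)$, the definition $\mathcal{L}^k_{\pi,C} = i^*\mathcal{L}_{\pi,C} i$ gives $-\mathcal{L}^k_{\pi,C} = i^*(-\mathcal{L}_{\pi,C}) i = (Bi)^*(Bi) = J^*J$. Substituting into the sandwich yields
\begin{equation}
\nonumber
i\,(-\mathcal{L}^k_{\pi,C})^{-1} i^* = B^{-1}\, J(J^*J)^{-1}J^*\, B^{-1}.
\end{equation}
With $(J^*J)^{-1}$ read as the pseudo-inverse on $\overline{\mathrm{range}(J^*)}$, the middle factor $J(J^*J)^{-1}J^*$ is precisely the orthogonal projection $P$ onto $\overline{\mathrm{range}(J)}\subset L^2_0(\pi)$; this is the standard factorisation identity, and in finite dimensions with $i$ invertible it collapses to the sanity check $M(M^\top A M)^{-1}M^\top = A^{-1}$. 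Hence the right-hand side of \eqref{eq:inversion formula} equals $B^{-1}PB^{-1}$, and the claim reduces to showing $B^{-1}PB^{-1} = B^{-1}B^{-1}$ on $H_0^1(\pi)$, i.e. that $P = \mathrm{Id}$ on $L^2_0(\pi)$.

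Therefore the heart of the argument — and the step I expect to be the main obstacle — is establishing that $\overline{\mathrm{range}(J)} = \overline{B\,\mathcal{H}_k} = L^2_0(\pi)$, equivalently that $\mathcal{H}_k$ is dense in the energy space $H_0^1(\pi)$. This is where the richness of the kernel enters (a characteristic/universal-type density of $\mathcal{H}_k$, available under the standing $C_b^2$ hypotheses), and it must be combined with careful domain bookkeeping, since $B$ and hence $J$ are unbounded and $(J^*J)^{-1} = (-\mathcal{L}^k_{\pi,C})^{-1}$ is genuinely unbounded on $\mathcal{H}_k$ (its spectrum accumulates at $0$, because $i$ is compact-like). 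The subtle point is that it is the spectral gap of $\mathcal{L}_{\pi,C}$ itself — not of its kernelisation — that guarantees the composite $i(-\mathcal{L}^k_{\pi,C})^{-1} i^*$ extends to a bounded operator on $H_0^1(\pi)$ in the first place. With density in hand one reads off $P = \mathrm{Id}$, concludes $B^{-1}PB^{-1} = B^{-2} = \mathcal{L}_{\pi,C}^{-1}$, and the identity follows; I would either invoke \cite{pillaud2023kernelized} for the density and the domain technicalities, or supply them directly by approximating any mean-zero $H^1$ function by RKHS elements $i g_n$ and passing to the limit using the boundedness just established.
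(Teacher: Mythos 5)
First, a point of reference: the paper does not prove this lemma at all --- it is imported verbatim from \cite{pillaud2023kernelized} (Proposition 3.3 there), so your attempt has to be measured against the argument in that reference rather than against anything in this paper. Your reconstruction follows essentially the same route as that reference: factor $-\mathcal{L}_{\pi,C}=B^2$ with $B=(-\mathcal{L}_{\pi,C})^{1/2}$, observe that $-\mathcal{L}^k_{\pi,C}=(Bi)^*(Bi)=J^*J$, identify $J(J^*J)^{-1}J^*$ with the orthogonal projection $P$ onto $\overline{\mathrm{range}(J)}$, and reduce the inversion formula to $\overline{\mathrm{range}(Bi)}=L^2_0(\pi)$ (the mean-zero subspace of $L^2(\pi)$), i.e.\ to density of $\mathcal{H}_k$ in the energy space. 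This is the correct architecture, and singling out the density statement as the crux is exactly right; note also that since $B^{-1}(H^1_0(\pi))$ is dense in $L^2_0(\pi)$, the identity genuinely forces $P=\mathrm{Id}$, so there is no way around this step.

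The one genuine gap is your claim that the density is ``available under the standing $C_b^2$ hypotheses.'' It is not: boundedness and smoothness of $k$ give the continuous embedding $\mathcal{H}_k\hookrightarrow H^1(\pi)$ (hence boundedness of $J=Bi$), but say nothing about the size of $\mathcal{H}_k$. For instance $k(x,y)=\cos(x-y)$ on $\mathbb{R}$ is $C_b^\infty$ and positive definite, yet its RKHS is the two-dimensional span of $\sin$ and $\cos$, so $P\neq\mathrm{Id}$ and the inversion formula fails. In \cite{pillaud2023kernelized} the density of $\mathcal{H}_k$ in $H^1(\pi)$ is a separate standing (universality-type) assumption, which the restatement of the lemma in this paper silently omits; your proof cannot close without importing it explicitly. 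Two smaller cautions: the rewriting $i=B^{-1}(Bi)$ holds only modulo constants, since $B$ annihilates them, which is why the identity is asserted only on $H^1_0(\pi)$; and the domain bookkeeping for the unbounded $(J^*J)^{-1}$ is not entirely cosmetic --- one needs $J^*B^{-1}f\in\mathrm{range}(J^*J)$, not merely $\mathrm{range}(J^*)$, which is where the polar decomposition and the spectral gap have to be used carefully, as you anticipate but defer.
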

Combining Lemma \ref{lem:Poisson} and Lemma \ref{lem:inversion}, we see that an appropriate vector field can be obtained by setting 
\begin{equation}
\label{eq:kernelised diffusion maps}
v_t = C_t \nabla i (\mathcal{L}^k_{\pi,C})^{-1}i^* \overline{h}_t,   
\end{equation}
with the shorthand notation $\overline{h}_t = h - \int_{\mathbb{R}^d} h \, \mathrm{d}\pi_t$. Indeed, $i^* \overline{h}_t$ corresponds to the right-hand side in \eqref{eq:int MF}, for $v_t^0 = 0$, and $\mathcal{L}^k_{\pi,C}$ implements the integral operator on the left-hand side.

\textbf{Regression interpretation.}
Let us introduce the scaled gradient operator $A := C \nabla$, viewed as an unbounded operator mapping (a dense subspace of) $L^2(\pi)$ into $(L^2(\pi))^d$. It is straightforward to verify that its formal adjoint with respect to $\langle \cdot ,\cdot \rangle_{(L^2(\pi))^d,C}$ is given by $A^* v = -\tfrac{1}{\pi}\nabla \cdot(\pi v)$. Indeed, 
\begin{equation*}
\langle v,A  \phi\rangle_{(L^2(\pi))^d,C} = \langle A^* v, \phi \rangle_{L^2(\pi)}, \qquad \text{for all }\phi \in C_c^{\infty}(\mathbb{R}^d), \,\, v \in (C_c^{\infty}(\mathbb{R}^d))^d.
\end{equation*}
Therefore, we can write $\mathcal{L}_{\pi,C} = -A^* A$, and, using \eqref{eq:kernelised diffusion maps}, we can express the KME vector field in an abstract form as
\begin{subequations}
\label{eq:regression}
\begin{align}
\label{eq:regression1}
v_t^* & = A i (i^*A^* A i +  \varepsilon I_{\mathcal{H}_k} )^{-1} i^* \overline{h}
\\
\label{eq:regression2}
& = Ai i^* \left( (Aii^*)^* (Ai i^*) + \varepsilon ii^* \right)^{-1} i i^* \overline{h},
\end{align}
\end{subequations}
where we have dropped the time-indices for notational efficiency, but note that the adjoints depend on time through the underlying measure $\pi_t$. We would also like to stress that in \eqref{eq:regression1}, we have $i^*: L^2(\pi) \rightarrow \mathcal{H}_k$ and $A^* : (L^2(\pi))^d \rightarrow L^2(\pi)$, that is, the adjoints are taken with respect to different spaces and inner products (implied by the domains and target spaces of $i$ and $A$). To translate \eqref{eq:regression2} back into the context from the previous sections, note that $i i^* \bar{h}$ corresponds to the right-hand side of \eqref{eq:RHS linear system} for $v_t^0 \equiv 0$, the operator $\left( (Aii^*)^* (Ai i^*) + \varepsilon ii^* \right)^{-1}$ is the solution operator for \eqref{eq:new reg}, and $Aii^*$ maps $\alpha$ to $v$ as in \eqref{eq:v star}.

Inspection of \eqref{eq:regression2} shows that $v^*$ is obtained from a normal-form expression for a regularised regression-type problem of the form 
\begin{equation*}
(Ai i^*)^* v  = i i^* \overline{h},
\end{equation*}
see \citet[Section 2.2]{kirsch2011introduction}. More specifically, $v^*$ minimises an appropriate Tikhonov functional which turns out to be equivalent to \eqref{eq:MMD functional}. For details, we refer to the proof of Proposition \ref{prop:regression} in Appendix \ref{app:calc}. The regression-type formulation in \eqref{eq:regression} holds the promise of connecting statistics and optimal transport \citep{chewi2024statistical}, and of establishing deeper relationships to other dynamical inference methods that rely on least-squares objectives: score-based generative modeling \citep{song2021score,yang2023diffusion}, flow matching \citep{lipman2023flow}, or backward stochastic differential equation approaches for optimal control \citep{chessari2023numerical,richter2023continuous} share similar principles.   

\section{Numerical experiments}
\label{sec:experiments}
In this section, we evaluate KME-dynamics from Algorithm \ref{alg:KME dynamics} through toy and filtering experiments, and demonstrate its performance as the dimension $d$ of the underlying space increases.

In our experiments, we employ two positive definite kernel functions: the quadratic kernel, as defined in \eqref{eq:quadratic_kernel}, and the Gaussian characteristic kernel \eqref{eq:Gaussian kernel}, also called radial basis function (RBF) kernel. To improve reproducibility, we generate samples for Gaussian prior distributions by creating uniformly distributed Sobol sequences (using the \emph{Sobol} Julia package\footnote{\href{https://github.com/JuliaMath/Sobol.jl}{https://github.com/JuliaMath/Sobol.jl}}), which are then transformed into normally distributed pseudo-random numbers using the Gaussian inverse c.d.f.. Furthermore, recall that the time-dependent matrix $C_t$ is chosen as the ensemble covariance matrix at each iteration, see \eqref{eq:ensemble covariance}.
\begin{figure}
\centering
\begin{subfigure}{0.32\textwidth}
    \centering
    \includegraphics[width=\textwidth]{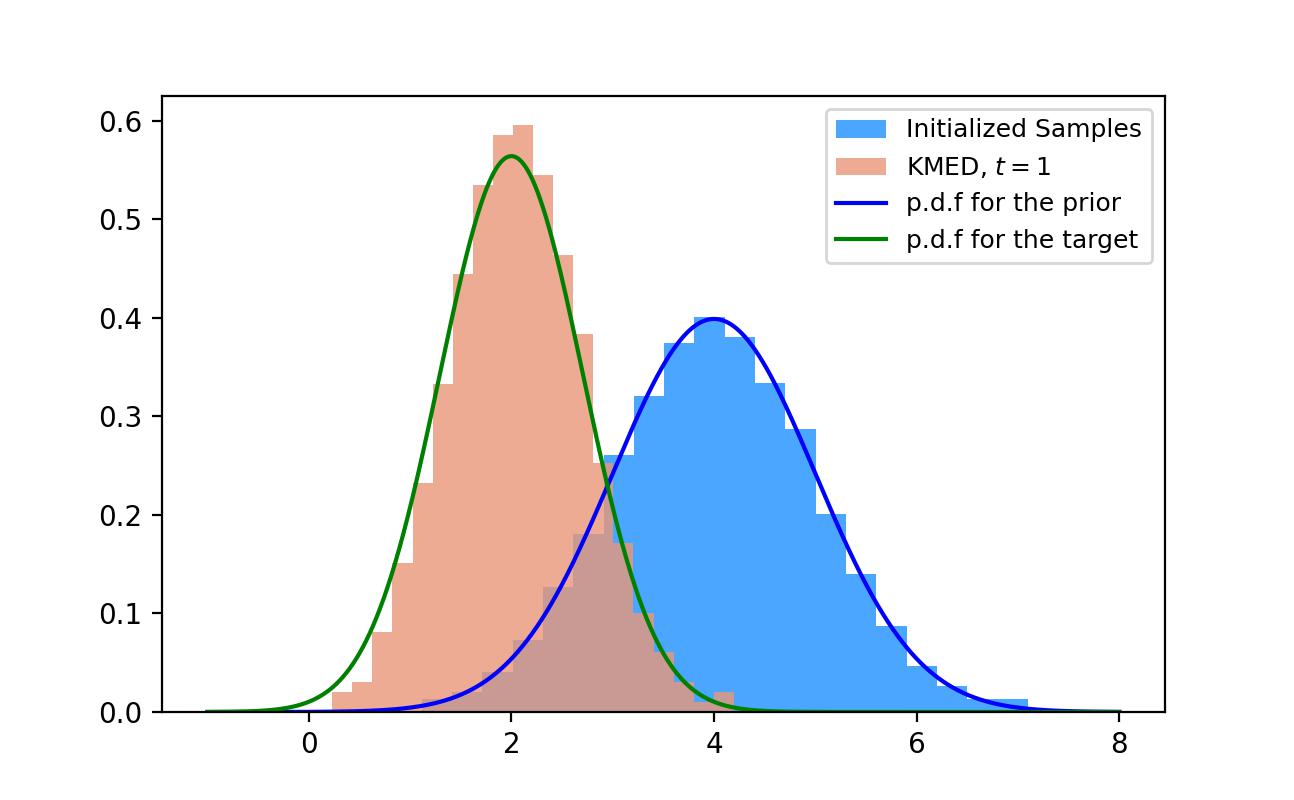}
    \caption{Gaussian to Gaussian.}
    \label{fig:toyGtoG}
\end{subfigure}
\hfill 
\begin{subfigure}{0.32\textwidth}
    \centering
    \includegraphics[width=\textwidth]{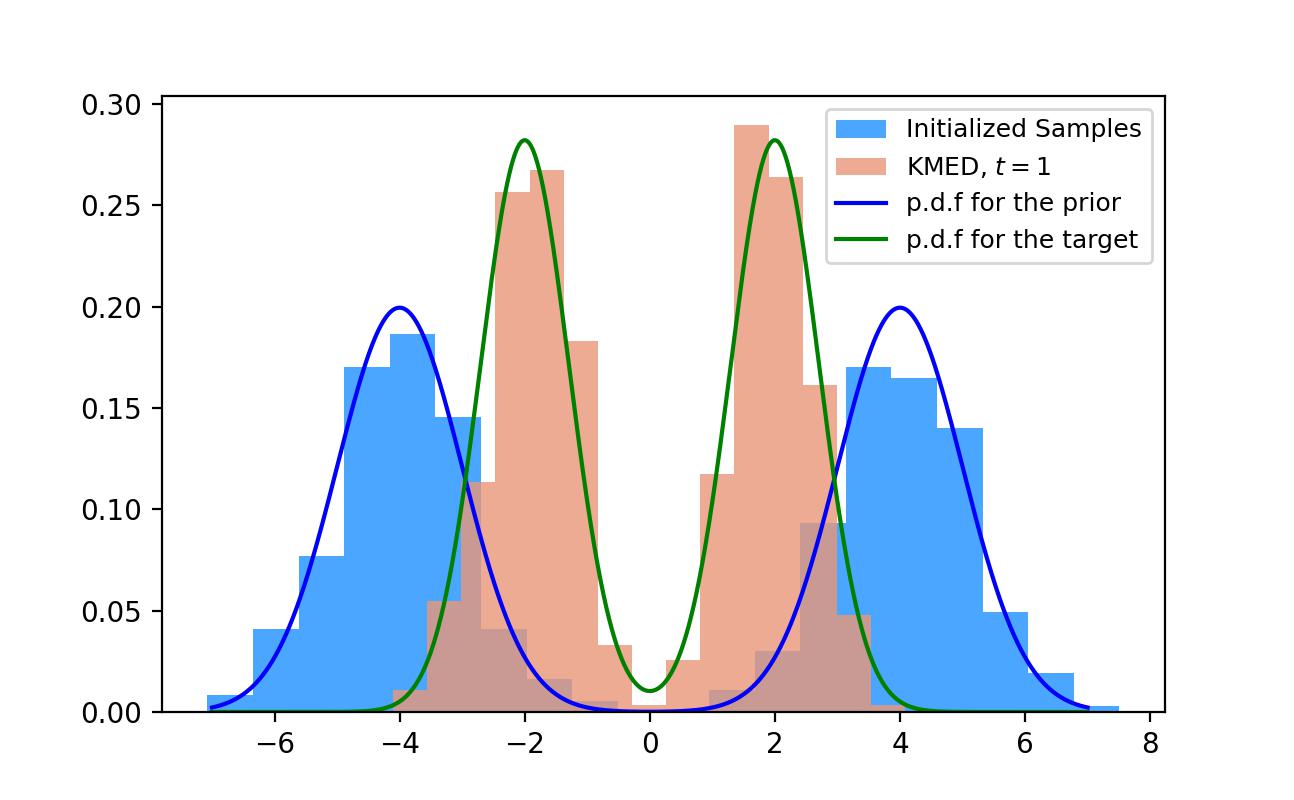}
    \caption{Mixture to Mixture.}
    \label{fig:toyGMtoGM}
\end{subfigure}
\hfill
\begin{subfigure}{0.32\textwidth}
    \centering
    \includegraphics[width=\textwidth]{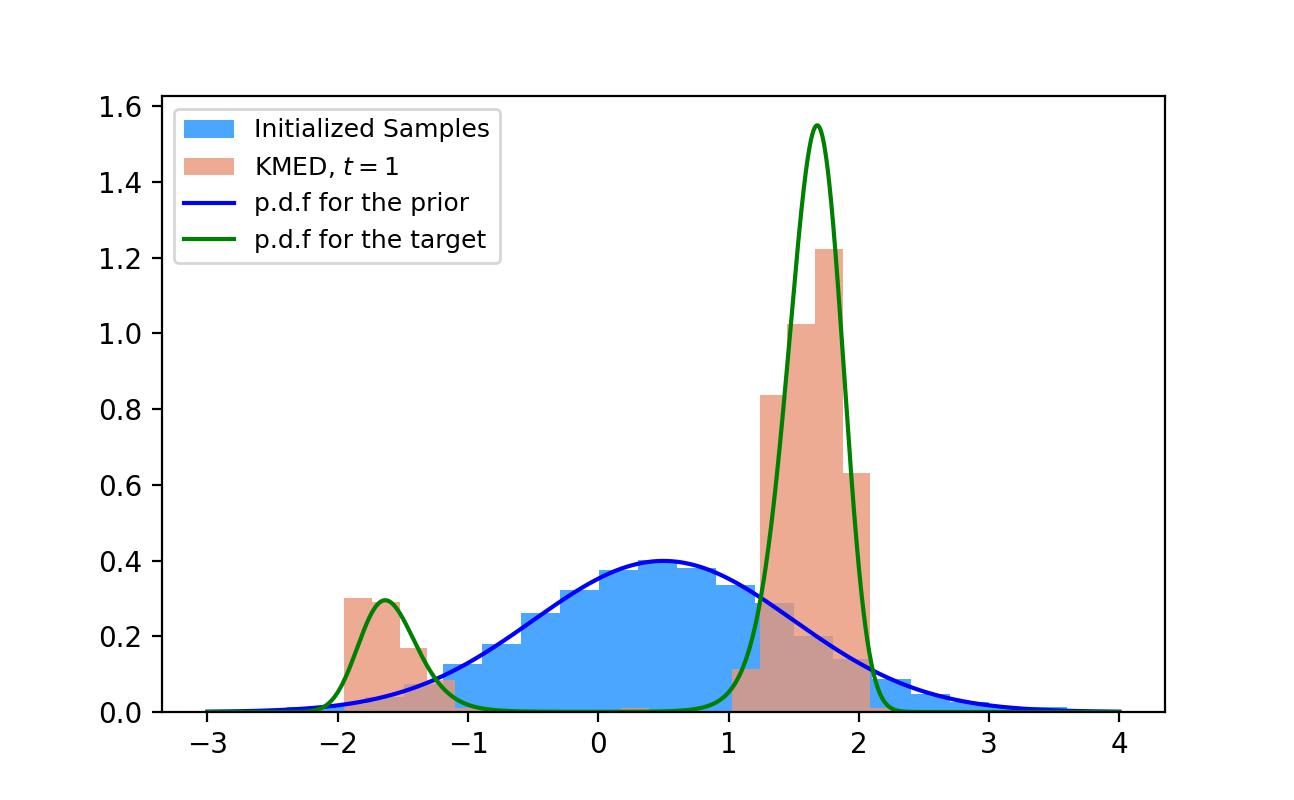}
    \caption{Gaussian to Mixture.}
    \label{fig:toypulido}
\end{subfigure}
\caption{Three Gaussian toy examples.}
\label{fig:toyexample}
\end{figure}
\subsection{Toy examples}
In this section, we test KME-dynamics with five $1$-dimensional toy examples. All of these experiments are performed with ensemble size $N = 500$, number of time steps $N_{\text{steps}} = 50$, and baseline vector field $v_{t}^{0} \equiv 0$, for all $t \in [0,1]$. Based on the non-Gaussian experiments, we explore the question of kernel choice for KME-dynamics; in particular, we contrast the performance of the quadratic Kalman-like kernel \eqref{eq:quadratic_kernel} with the characteristic RBF kernel \eqref{eq:Gaussian kernel}.

Three of the toy examples are constructed by using either Gaussian distributions or Gaussian mixture distributions for the priors and posteriors:

\textbf{Gaussian to Gaussian.} We define the negative log-likelihood to be ${h(x) = \frac{1}{2}x^2}$, and initialise samples from $\pi_{0} = \mathcal{N}(4,1)$. A straightforward computation shows that the target posterior is given by $\mathcal{N}(2,0.5)$. We apply the RBF kernel with bandwidth $\sigma = 5$, and fix the regularisation as $\varepsilon = 10^{-9}$.

\textbf{Mixture to Mixture.} We define the negative log-likelihood to be $h(x) = \frac{1}{2}x^2$, and initialise samples from $\pi_{0} = \frac{1}{2}\mathcal{N}(4,1) + \frac{1}{2}\mathcal{N}(-4,1)$. A straightforward computation shows that the target is $\frac{1}{2}\mathcal{N}(2,0.5) + \frac{1}{2}\mathcal{N}(-2,0.5)$. Again, we apply the RBF kernel with bandwidth $\sigma = 5$, and fix the regularisation as $\varepsilon = 10^{-9}$.

\textbf{Gaussian to Mixture.} We define the negative log-likelihood to be ${h(x) = (3-x^2)^2}$, and initialise samples from $\pi_{0} = \mathcal{N}(0.5,1)$. We compute the p.d.f for the target as ${\pi_{1}(x) = \frac{e^{-h(x)} \pi_{0}(x)}{Z}}$, where we compute the normalising constant ${Z = \int_{-\infty}^{\infty} e^{-h(x)} \pi_{0}(x)\, \mathrm{d} x}$ numerically using the Julia package \textit{QuadGK}.\footnote{\href{https://juliamath.github.io/QuadGK.jl/stable/}{https://juliamath.github.io/QuadGK.jl/stable/}} We apply the RBF kernel with bandwidth $\sigma = 0.95$, and fix the regularisation as $\varepsilon = 10^{-8}$.
The results of the Gaussian toy experiments are visualised in Figure \ref{fig:toyexample}, where we plot the normalised histograms for $X_{0}$ and $X_{1}$, and compare them to the exact p.d.f.'s for the priors and targets. Figure \ref{fig:toyexample} shows that KME-dynamics can produce high-quality samples, although some challenges remain in the `Gaussian to Mixture' (splitting of probability mass) example. We conjecture that including a noise term as in \eqref{eq:MF intro} might mitigate this problem, and leave further exploration for future work (see also \cite{maurais2024sampling}).
\begin{figure}
\centering
\begin{subfigure}{0.49\textwidth}
    \centering
    \includegraphics[scale = 0.4]{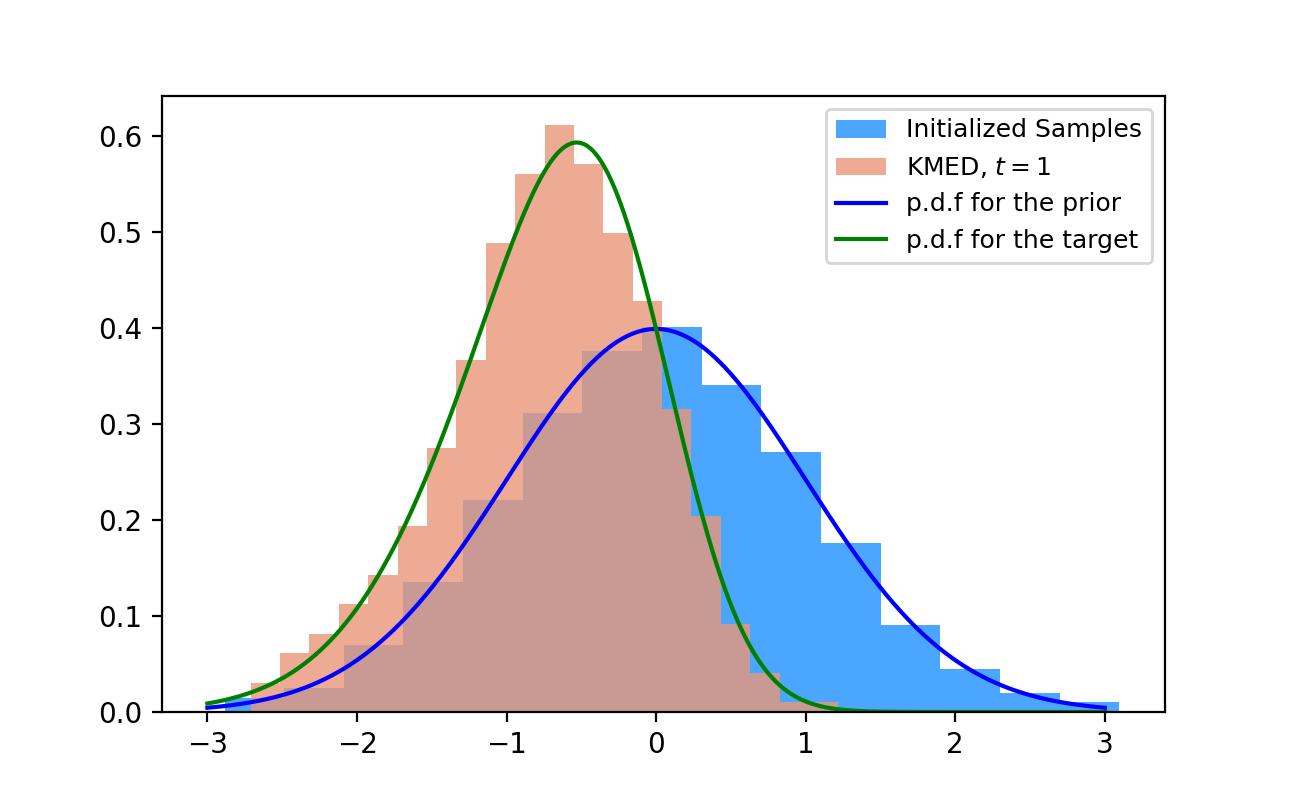}
    \caption{$d = 1$, $k_{\text{RBF}}$}
    \label{fig:skew_example}
\end{subfigure}
\hfill
\begin{subfigure}{0.49\textwidth}
    \centering
    \includegraphics[scale = 0.4]{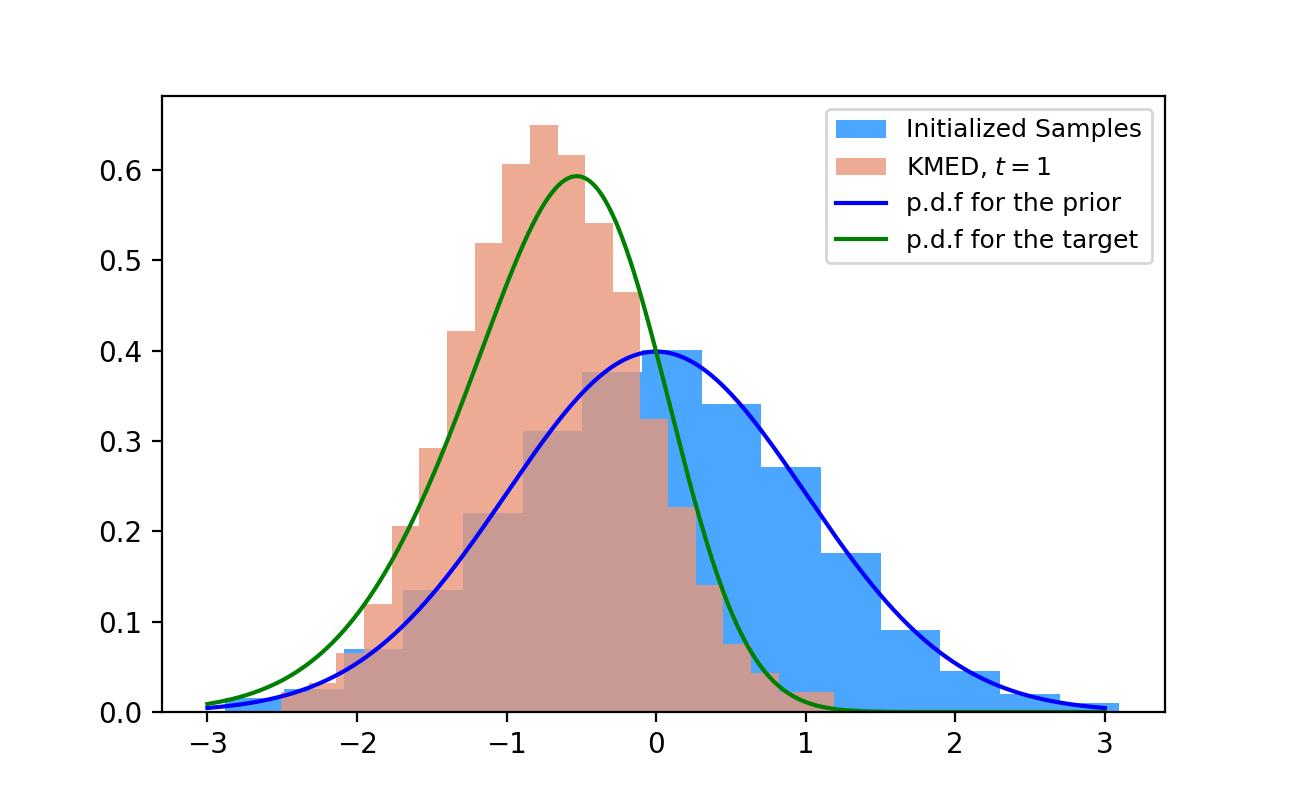}
    \caption{$d = 1$, $k_2$}
    \label{fig:skew_example_q}
\end{subfigure}
\caption{Posterior approximation and target p.d.f. for the one-dimensional skew-normal experiment.}
\label{fig:skew_Normal}
\end{figure}

Another two toy experiments are constructed with Gaussian priors and skew Normal posteriors:

\textbf{Skewing the Normal distribution and the kernel choice.} In this case we define the negative log-likelihood to be $h(x) = -\log(2\Phi(-2x))$, where $\Phi$ is the Gaussian c.d.f.. We initialise samples from $\pi_{0} = \mathcal{N}(0,1)$. Again, the p.d.f for the target is computed with the help of \textit{QuadGK}. We apply the RBF kernel with bandwidth $\sigma = 1.5$ and the quadratic kernel, respectively. The regularisation is fixed as $10^{-5}$. 

In Figure \ref{fig:skew_Normal}, we show normalised histograms of the posterior approximation, obtained by KME-dynamics with RBF-kernel (Figure \ref{fig:skew_example}) and quadratic kernel (Figure \ref{fig:skew_example_q}). Comparing with the target p.d.f., we see that the RBF kernel outperforms the quadratic kernel in this case, capturing the nonlinearity in the dynamical evolution. As suggested by Observation \ref{obs:Kalman} and Proposition \ref{pro:ReproducingKF}, the quadratic kernel provides a Gaussian approximation. 

\subsection{Estimating normalising constants}\label{sec:experiment_estimate_NC}
In this section, we further compare KME-dynamics and its extensions in the context of estimating the normalising constant for Gaussian posteriors.
All experiments are performed on the $d$-dimensional `Gaussian to Gaussian' toy example. Specifically, the prior is $\pi_{0} = \mathcal{N}(\mu_{0},\Sigma_{0})$, with mean  $\mu_{0} = \left(1, 1, \ldots, 1 \right) \in \mathbb{R}^d$ and $\Sigma_{0} = I_{d\times d}$ is the $d\times d$ identity matrix, and the negative log-likelihood is $h(x) = \tfrac{1}{2}x^{\top} x$.

\textbf{Benchmarks with normalising constants.} We will evaluate the performance of the algorithms considering various features: the dimensionality ($d$), the sample size ($N$), the choice of kernel function (either RBF kernel with different bandwidths or quadratic kernel), and the subdivision of the time interval ($N_{\mathrm{steps}}$). 

The efficacy of the algorithms will be assessed by comparing the estimated normalising constant $\widehat{Z}_{1}$ with the true value, calculated as $Z_{1} = \exp\left( \tfrac{d}{2}\log \tfrac{1}{2} - \tfrac{d}{4}\right)$. When the samples are weighted, the estimator is given by \eqref{eq:estimator of normalising constant within IS scheme}. When the samples are not weighted, the estimator is given by \eqref{eq:estimator of normalising constant outside IS scheme}, and it can be further approximated using Euler's scheme:
\begin{equation}
    \label{eq:estimator of normalising constant for unweighted samples}
    \widehat{Z}_{1} \approx \exp\left(- \tfrac{\Delta t}{N} \sum_{j = 0}^{N_{\mathrm{steps}} - 1}  \sum_{i = 1}^{N} h(X_{j\Delta t}^{i}) \right).
\end{equation}
\textbf{Setting up the algorithms.} We will evaluate five algorithms: plain KME-dynamics without modifications (KMED), Kalman-adjusted KME-dynamics (KAKMED), weighted KME-dynamics (WKMED), weighted Kalman-adjusted KME-dynamics (WKAKMED), and Sequential Importance Resampling (SIR). Note that for all KME-dynamics related algorithms, we employ the RBF kernel by default unless otherwise specified.

For KMED and KAKMED, it is assumed that the samples at each time $t \in [0,1]$ are equally weighted. However, the baseline vector fields differ: $v_{t}^0 = 0$ for KMED and $v_{t}^0 = v_{\mathrm{Kalman}}$ for KAKMED, where $v_{\mathrm{Kalman}}$ is defined in \eqref{eq:Kalman baseline}. WKMED and WKAKMED extend KMED and KAKMED using the importance sampling scheme from Section \ref{sec:importance sampling scheme}, where we compute weights for samples at each $t \in [0,1]$ to correct the corresponding measures. 

For SIR, at each $t \in [0,1]$, sampling from $X_{t+\Delta t}$ is performed by resampling from $X_{t}$ with respect to the unnormalised weight $w_{t}(X_{t}):= e^{-\Delta t h(X_{t})} \propto \frac{\pi_{t+\Delta t}(X_{t})}{\pi_{t}(X_{t})}$. With samples $(X_{t}^{i})_{i=1}^{N}$, we compute the unnormalised weight $w_{t}^{i} := w_{t}(X_{t}^{i}) = e^{-\Delta t h(X_{t}^{i})}$ for all $i = 1,\ldots, N$, then normalise the weights as $W_{t}^{i}:= \frac{w_{t}^{i}}{\sum_{i=1}^{N}w_{t}^{i}}$, and produce unbiased samples $(X_{t+\Delta t}^{i})_{i=1}^{N}$ such that $\frac{1}{N}\sum_{i=1}^{N}\delta_{X_{t+\Delta t}^{i}} = \sum_{i=1}^{N}W_{t}^{i}\delta_{X_{t}^{i}}$ via multinomial resampling, see, for example, \cite{chopin2020introduction,doucet2009tutorial}.

Below we provide more details about the parameter setup:

\textbf{Dimensionality.} In this experiment, we fix the sample size $N = 100$ and the number of time steps $N_{\mathrm{steps}} = 50$, while varying the dimensionality $d = 1,2,\ldots, 55$. We conduct comparisons among KMED, WKMED, and SIR. Specifically, we set the bandwidth ${\sigma = 5.5}$ for both KMED and WKMED. For regularisation parameters, we take $\varepsilon = 10^{-5}$ for KMED and $\varepsilon = 10^{-3}$ for WKMED. The increased regularisation for WKMED addresses its instability in higher dimensions. Additionally, we explore a variant of WKMED, where the choice of $C_t$ from \eqref{eq:ensemble covariance} is replaced by $C_t = I_{d \times d}$, to demonstrate the benefits of using the ensemble covariance as $C_t$. For SIR, due to its high instability, we conduct the experiment $100$ times for each configuration $(N, N_{\mathrm{steps}}, d)$ and report means and variances of the results.

\textbf{Sample size.} In this experiment, we set the dimensionality $d = 3$ and the number of time steps $N_{\text{steps}} = 50$, while varying the sample size $N \in [2,1000]$. We evaluate the performance of three algorithms: KMED, WKMED, and SIR. For KMED and WKMED, we select a kernel bandwidth $\sigma = 4.0$ and a regularisation parameter $\varepsilon = 10^{-5}$. The experimental procedure for SIR mirrors the approach taken in the dimensionality experiment.

\textbf{Kernel choice.} In this experiment, we fix the dimensionality $d = 3$, the sample size $N = 100$, and the number of time steps $N_{\text{steps}} = 50$. We assess the performance of KMED employing an RBF kernel against the bandwidth parameter $\sigma$ within the range $[0.5, 4.0]$. Additionally, we also check the performance of KMED with the quadratic kernel.

\textbf{Time discretisation.} In this experiment, we fix the dimensionality $d = 3$ and the sample size $N = 200$, while varying the number of time steps $N_{\mathrm{steps}}$ from 5 to 150. We conduct comparisons among four algorithms: KMED, KAKMED, WKMED, and WKAKMED. For all these algorithms, we set the kernel bandwidth $\sigma = 4.0$ and the regularisation parameter $\varepsilon = 10^{-5}$.

\begin{figure}[h]
\centering
\begin{subfigure}{0.4\textwidth}
    \centering
    \includegraphics[scale = 0.5]{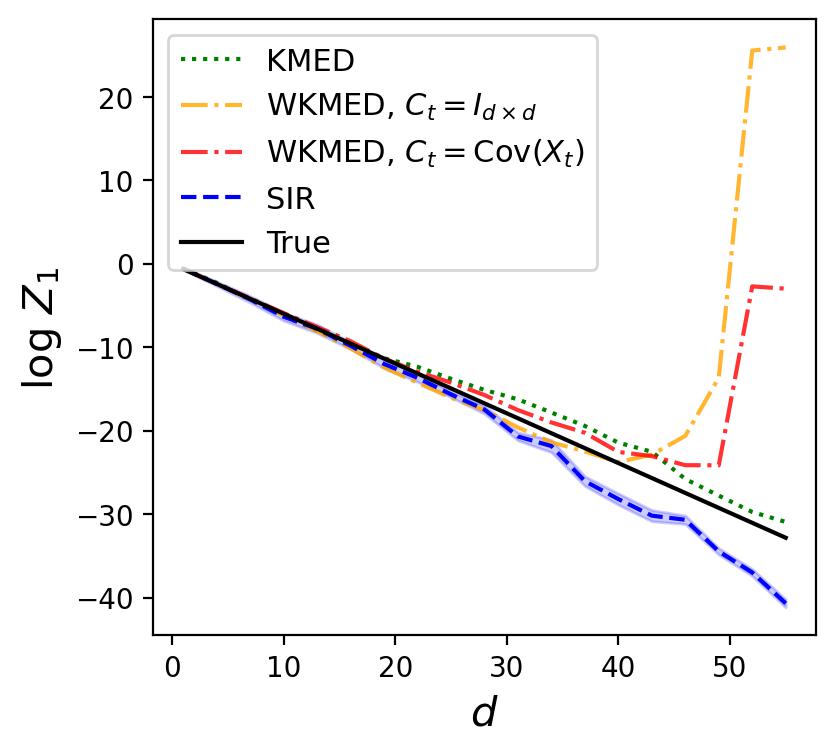}
    \caption{Benchmark on dimension $d$.}
    \label{fig:Bench_d}
\end{subfigure}
\hfill 
\begin{subfigure}{0.58\textwidth}
    \centering
    \includegraphics[scale = 0.5]{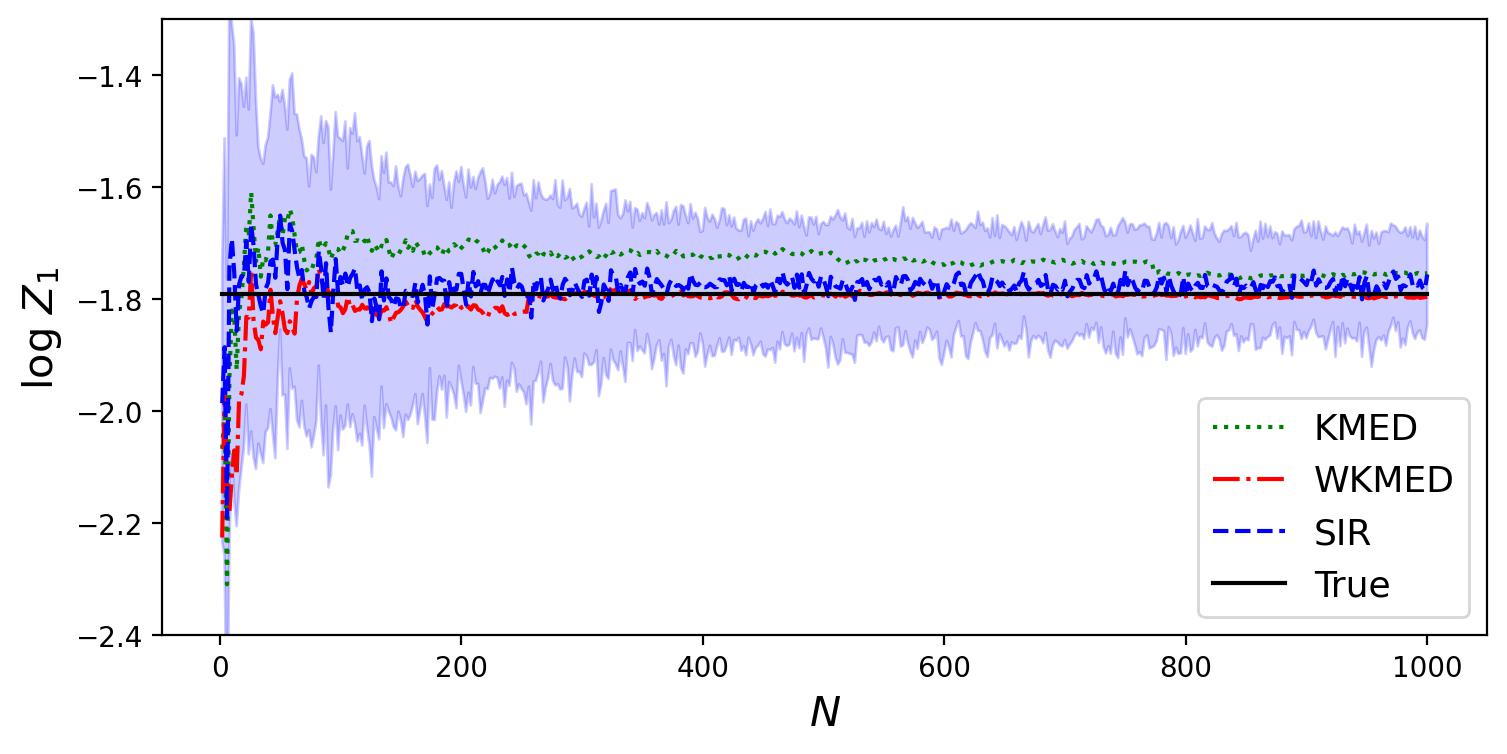}
    \caption{Benchmark on sample size $N$.}
    \label{fig:Bench_N}
\end{subfigure}
\hfill
\begin{subfigure}{0.495\textwidth}
    \centering
    \includegraphics[scale = 0.5]{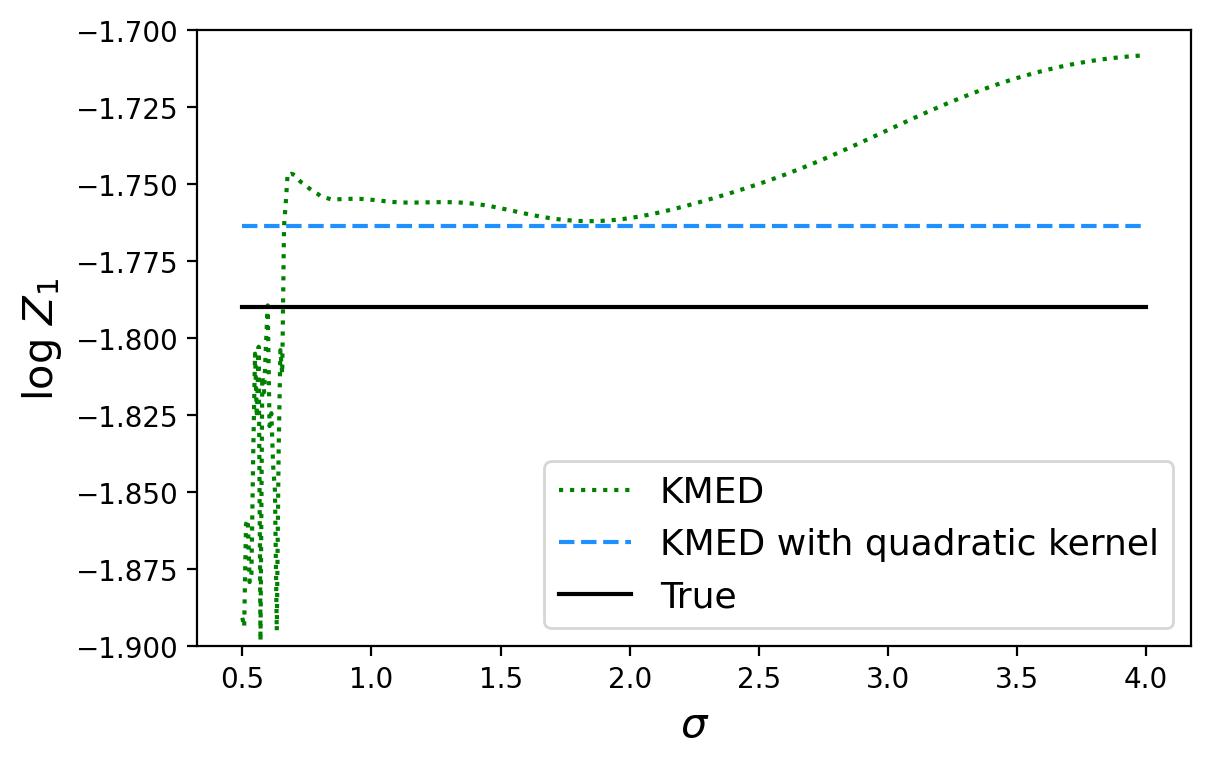}
    \caption{Benchmark on bandwidth $\sigma$.}
    \label{fig:Bench_sigma}
\end{subfigure}
\begin{subfigure}{0.495\textwidth}
    \centering
    \includegraphics[scale = 0.5]{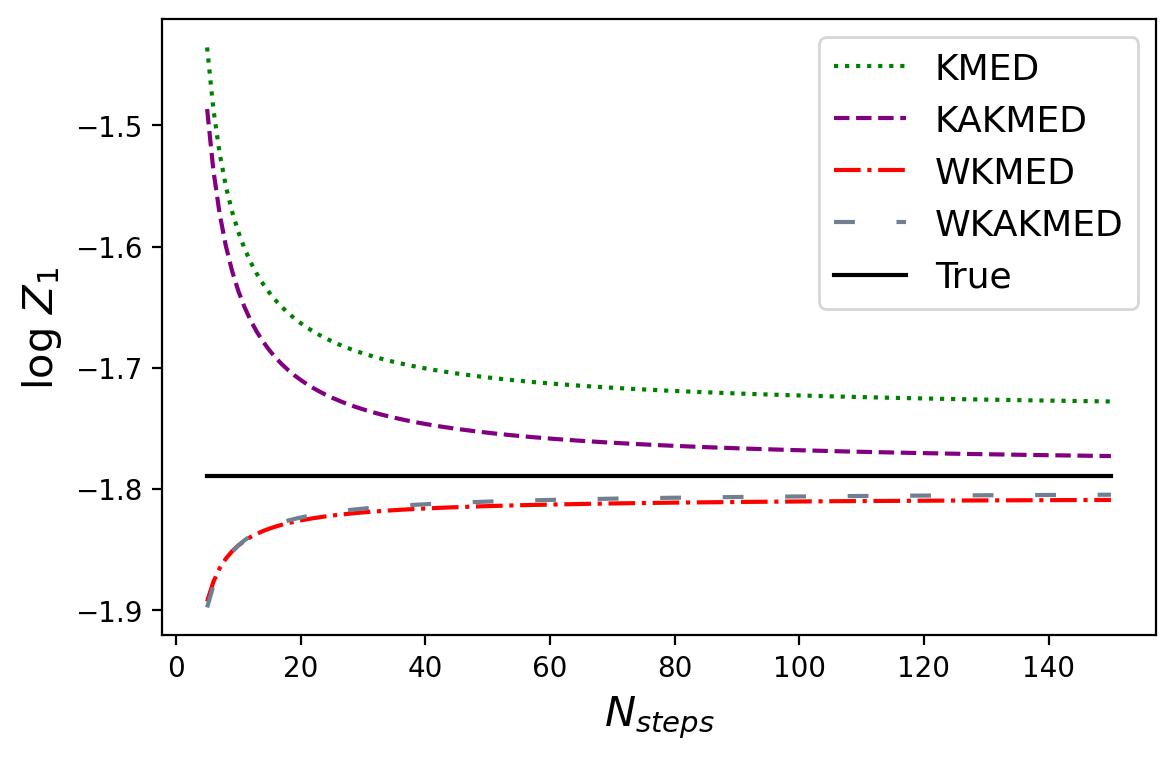}
    \caption{Benchmark on time discretisation.}
    \label{fig:Bench_time}
\end{subfigure}
\caption{We perform different sampling methods on the `Gaussian prior to Gaussian posterior' toy experiment and evaluate their performances by comparing the estimated normalising constant with the true value. The performance is evaluated against four parameters: dimensionality (top left), sample size (top right), bandwidth of the RBF kernel (bottom left), and the number of iterations for KME-dynamics (bottom right).}
\label{fig:Benchmarks}
\vspace{-0.5cm}
\end{figure}
The benchmark results are shown in Figure \ref{fig:Benchmarks}. As expected, the performance of KMED deteriorates with the increasing dimensionality of the space, but it improves with larger sample sizes and more subdivisions of the time interval $[0,1]$, as shown in Figures \ref{fig:Bench_d}, \ref{fig:Bench_N}, and \ref{fig:Bench_time}. Furthermore, Figures \ref{fig:Bench_d} and \ref{fig:Bench_N} demonstrate the superior stability and performance of KMED even in high-dimensional settings when the sample size is small. As expected, SIR (as an unbiased scheme with possibly high variance) remains the preferable option when the sample size is large and the prior and posterior intersect substantially. 

Remarkably, the enhanced performance of KAKMED, as shown in Figure \ref{fig:Bench_time}, suggests significant improvements in KMED when partial prior knowledge about the inference is available. WKMED has been numerically proven to be a robust technique for correcting the transported measure, as indicated in Figures \ref{fig:Bench_N} and \ref{fig:Bench_time}, particularly when the sample size is large. Nonetheless, WKMED suffers more from the curse of high dimensionality than KMED, as shown in Figure \ref{fig:Bench_d}. This is because the weight flow estimation, as described by \eqref{eq:Sequential computation for weight flow}, is highly sensitive to errors in score estimation (indeed, involving a  multiplication by $e^{\mathrm{error}_{\mathrm{score}}}$), where the score estimation was conducted using a similar procedure as KMED (as discussed in Section \ref{sec:Score estimation}). Due to this error-sensitivity, the choice of $C_t$ becomes crucial for KME-dynamics within the importance sampling scheme, as illustrated in Figure \ref{fig:Bench_d}. Indeed, while the improvement in KME-dynamics and score estimation by selecting $C_t$ to be as specified in \eqref{eq:ensemble covariance} may appear minimal, it significantly impacts the accuracy of weight approximation. 

\textbf{Choice of bandwidth; overfitting and underfitting.} From the perspective of the embedding \eqref{eq:abstract embedding}, it is reasonable to expect that $\mathcal{H}$ should neither be too small nor too large for optimal performance of KMED: Indeed, if $\mathcal{H}$ is too small, then the embedding $\Phi$ is not very expressive, and the accuracy in reproducing $(\pi_t)_{t \in [0,1]}$ is low (overfitting). If $\mathcal{H}$ is excessively large, we might expect fragile behaviour when the size of the ensemble is moderate (underfitting). Figure \ref{fig:Bench_sigma} shows the performance of KMED with RBF kernel against the bandwidth of the RBF kernel (we omit the part of the curve where the algorithm is unstable), recalling that the size of the corresponding RKHS decreases with increasing bandwidth \citep[Proposition 4.46]{steinwart2008support}. As expected, KMED is optimised by a specific choice of intermediate bandwidth, to prevent from overfitting or underfitting. In accordance with Section \ref{sec:quadratic Kalman}, KMED with the quadratic kernel is particularly effective in scenarios where both the prior and the posterior distributions are Gaussian.

\subsection{Data assimilation in Lorenz systems}
\label{sec:Lorenz}
In this section we compare KME-dynamics with the Ensemble Kalman Filter (EnKF) in the context of the nonlinear filtering problem for the Lorenz-63 and Lorenz-96 systems, which are studied in detail, for instance, by \citep{reich2015probabilistic}. 
Similar experiments were conducted by  \citet{stordal2021p}, who compared the EnKF to the mapping particle filter (MPF) by \citet{pulido2019sequential}, another kernel-based method that rests on Stein variational gradient descent (SVGD) introduced by \citet{liu2016stein}. Notice that SVGD ordinarily requires the scores of the target distributions; however, in the context of data assimilation, only a Monte Carlo estimator of the score function is typically available. The MPF algorithm essentially applies the SVGD method, but it uses the estimated score functions as input.

We first explain the basic framework of this experiment. The filtering is conducted using the state space model
\begin{equation}
\label{eq:HMM}
X_{n} = \mathcal{M}(X_{n-1}) + \eta_{n}, \qquad Y_{n} = X_{n} + \xi_{n},
\end{equation}
where $(X_n)_{n \in\mathbb{N}}$ represent a hidden signal, and $(Y_n)_{n \in \mathbb{N}}$ available observations. 
In \eqref{eq:HMM}, $\mathcal{M}$ represents the dynamics derived from the 4th-order Runge-Kutta method applied to either the Lorenz-63 or the Lorenz-96 system. The terms $\eta$ and $\xi$ denote i.i.d. Gaussian noise of appropriate dimensionality, with covariance matrices $Q$ and $R$, respectively. More specifically, the evolution of the Lorenz-63 system is described by the following set of ordinary differential equations (ODEs):
\begin{align}
\label{eq:Lorenz63}
    \frac{\mathrm{d}x_t}{\mathrm{d}t} = 10(y_t-x_t), \qquad
    \frac{\mathrm{d}y_t}{\mathrm{d}t} = x_t(28 - z_t) - y_t, \qquad
    \frac{\mathrm{d}z_t}{\mathrm{d}t} = x_t y_t - \frac{8}{3}z_t,
\end{align}
where $(x_t, y_t, z_t)$ represents a curve in $\mathbb{R}^3$. The evolution of the Lorenz-96 system generalises the trajectory to $(x^{1}_{t}, x^{2}_{t}, \ldots, x^{d}_{t}) \in \mathbb{R}^d$ and is defined by the following system of ODEs:
\begin{equation}
\label{eq:lorentz96}
\frac{\mathrm{d}x^{m}_{t}}{\mathrm{d}t} = (x^{m+1}_{t} - x^{m-2}_{t}) x^{m-1}_{t} - x^{m}_{t} + F,
\end{equation}
for $m = 1, 2, \ldots, d$. Here, $F$  is a constant real number.

\textbf{Creating observations.} After initialising the position $(x^{1}_{0}, x^{2}_{0}, \ldots, x^{d}_{0})$, we integrate \eqref{eq:Lorenz63} or \eqref{eq:lorentz96} using the the 4th-order Runge-Kutta method with time step $\delta t$. We set the observation time window to $\Delta t > \delta t$, and collect noisy observations
    \begin{equation}
    \label{eq:observation model}
    \beta_j = (x^{1}_{t_{j}}, x^{2}_{t_{j}}, \ldots, x^{d}_{t_{j}}) + \xi_j,
    \end{equation}
    at assimilation times $t_j = j \Delta t$, for $j=1,\ldots, N_{\text{assi}}$, in accordance with the state space model \eqref{eq:HMM}. 
    Recall that in \eqref{eq:observation model}, the perturbations are i.i.d. normally distributed, $\xi_j \sim \mathcal{N}(0,R)$.

\textbf{Filtering.} Data assimilation combines two sampling tasks, performed alternately and sequentially: the forecast step ${p(X_{n-1} \mid Y_{n-1}) \mapsto p(X_{n} \mid X_{n-1})}$, which can be simulated directly from the hidden model in \eqref{eq:HMM}; and the inference step $p(X_{n} \mid X_{n-1}) \mapsto p(X_{n} \mid Y_{n})$. We compare four methods for the inference step: Firstly, the Ensemble Kalman Filter (EnKF) is implemented as in \citet[Algorithm 7.7]{reich2015probabilistic}. Secondly, we use KME-dynamics with ${N_{\text{steps}} = 50}$ and baseline vector field $v^{0}_{t} \equiv 0$, $\forall t\in [0,1]$. Thirdly, we test Kalman Adjusted KME-dynamics (KA-KMED) by taking KME-dynamics with Kalman-Bucy velocity \eqref{eq:Kalman baseline} as the baseline $v_t^0$. We use the RBF kernel \eqref{eq:Gaussian kernel} for both KMED and KA-KMED. Finally, MPF is implemented as in \cite{pulido2019sequential}, with coordinate-wise step size adjustment based on ADAGRAD, as suggested by \cite{liu2016stein}.

The experimental details are as follows:

\textbf{Lorenz-63.} We set $\delta t = 0.01$, $\Delta t = 0.1$, and $N_{\text{assi}} = 100$. The system is initialised from $(-0.587, -0.563, 16.870)$, and the covariance of the observation error is ${R = 0.7 I_{3 \times 3}}$. The prior at $t=0$ is chosen as $\mathcal{N}((-0.587, -0.563, 16.870), 0.01 I_{3 \times 3})$. We consider ensemble sizes $N = 2, 4, 6, \ldots, 30$ and model errors $Q = qI_{3 \times 3}$, with $q$ taking the values $1.4 \Delta t$, $\frac{1.4 \Delta t}{10}$, and $\frac{1.4 \Delta t}{1000}$. We note that the filtering problem tends to exhibit more Gaussian characteristics with larger model error (as the chaotic deterministic dynamics becomes less prevalent), and stronger nonlinearity and non-Gaussianity with smaller model error (see the discussion by \citet[Appendix A.1]{stordal2021p}). For both KMED and KA-KMED, we set the bandwidth of the RBF kernel to $\sigma = 5$.
\begin{figure}
\centering
\begin{subfigure}{0.32\textwidth}
    \centering
    \includegraphics[scale = 0.47]{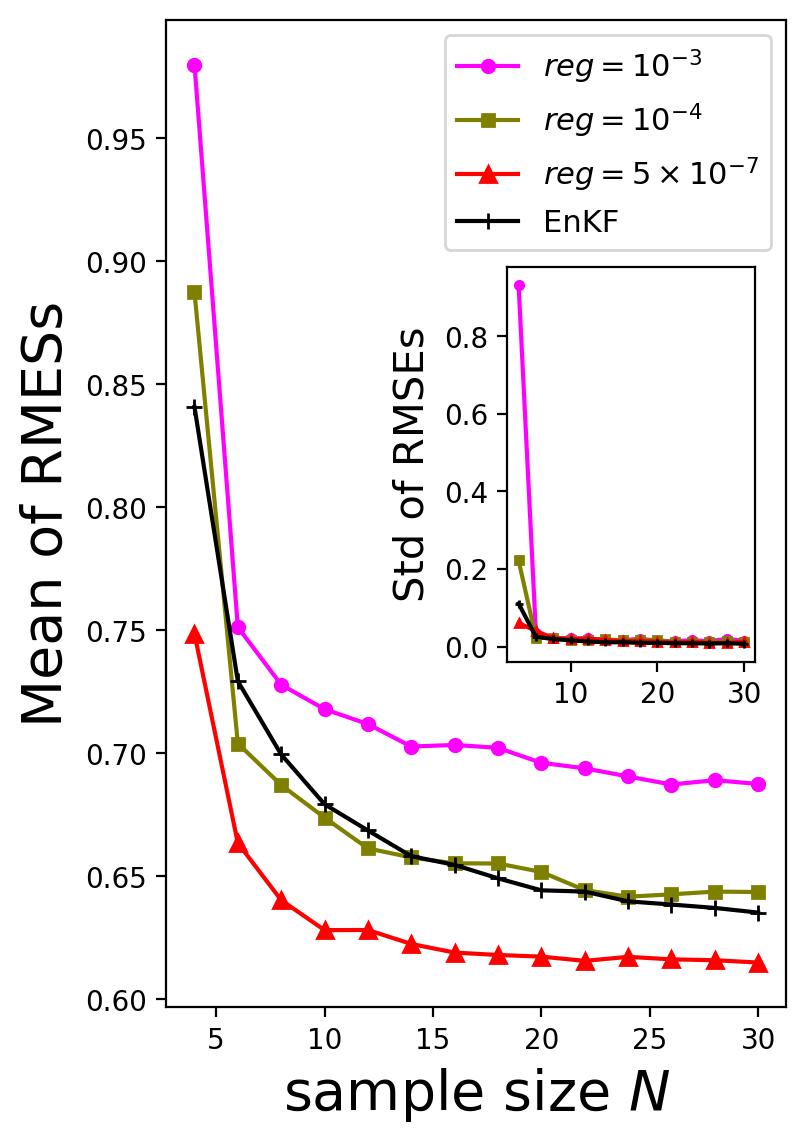}
    \caption{$q = 1.4 \Delta t$}
    \label{fig:filter_KMED_XL}
\end{subfigure}
\hfill
\begin{subfigure}{0.32\textwidth}
    \centering    \includegraphics[scale = 0.47]{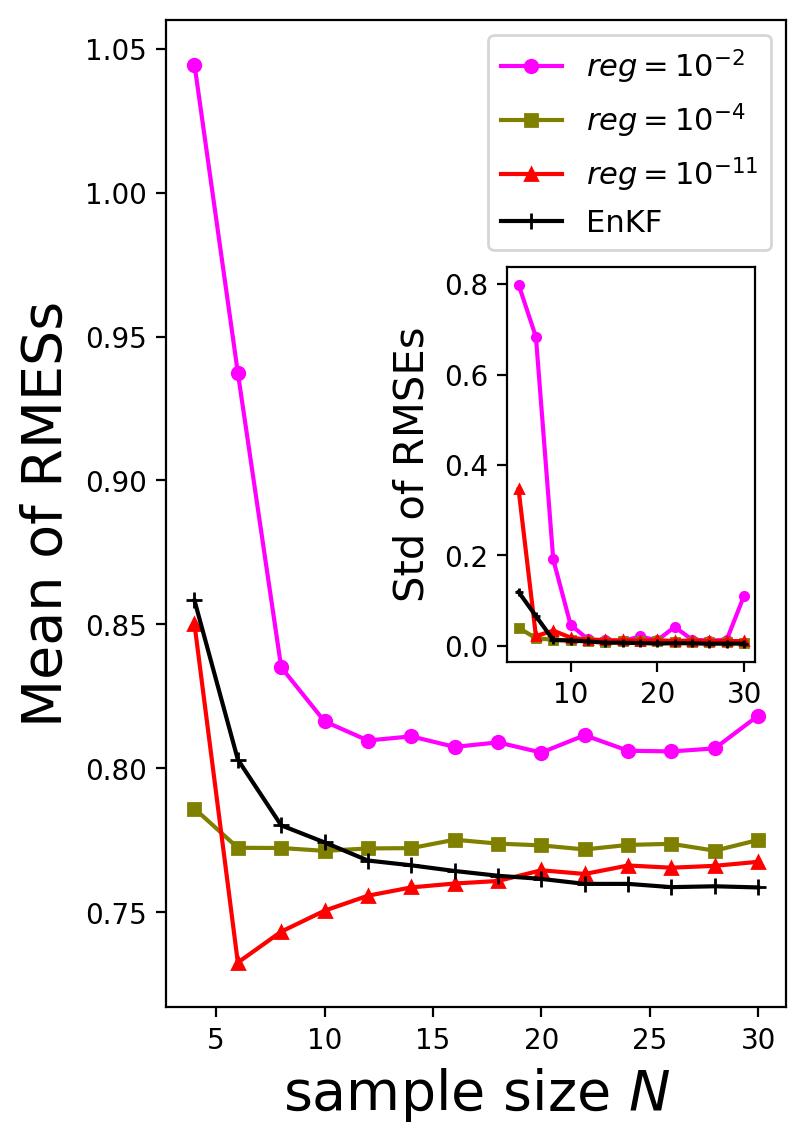}
        \caption{$q = \frac{1.4 \Delta t}{10}$}
    \label{fig:filter_KMED_L}
\end{subfigure}
\hfill
\begin{subfigure}{0.32\textwidth}
    \centering
    \includegraphics[scale = 0.47]{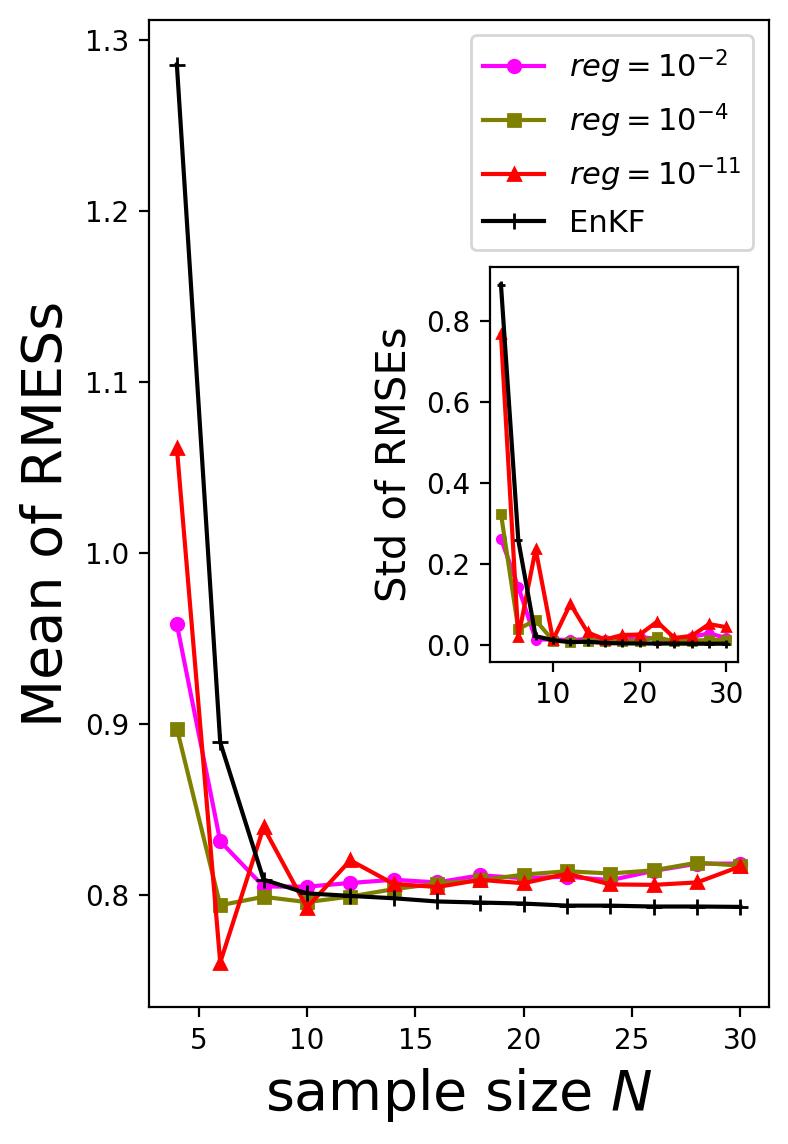}
    \caption{$q = \frac{1.4 \Delta t}{1000}$.}
    \label{fig:filter_KMED_XS}
\end{subfigure}
\caption{EnKF vs KMED for different regularisations, for large model error (left), small model error (middle),  and tiny model error (right).}
\label{fig:KMED_benchmark}
\end{figure}
\begin{figure}
\centering
\begin{subfigure}{0.32\textwidth}
    \centering
    \includegraphics[scale = 0.47]{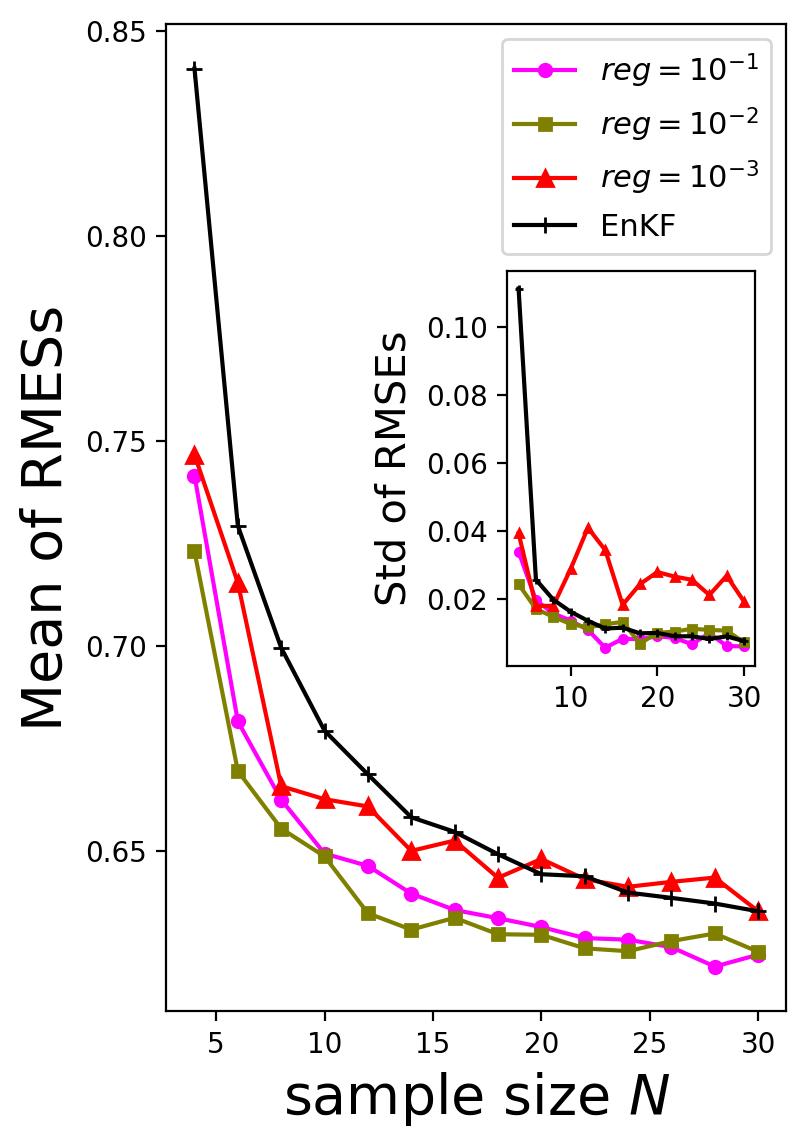}
    \caption{$q = 1.4 \Delta t$}
    \label{fig:filter_KAKMED_XL}
\end{subfigure}
\hfill
\begin{subfigure}{0.32\textwidth}
    \centering
    \includegraphics[scale = 0.47]{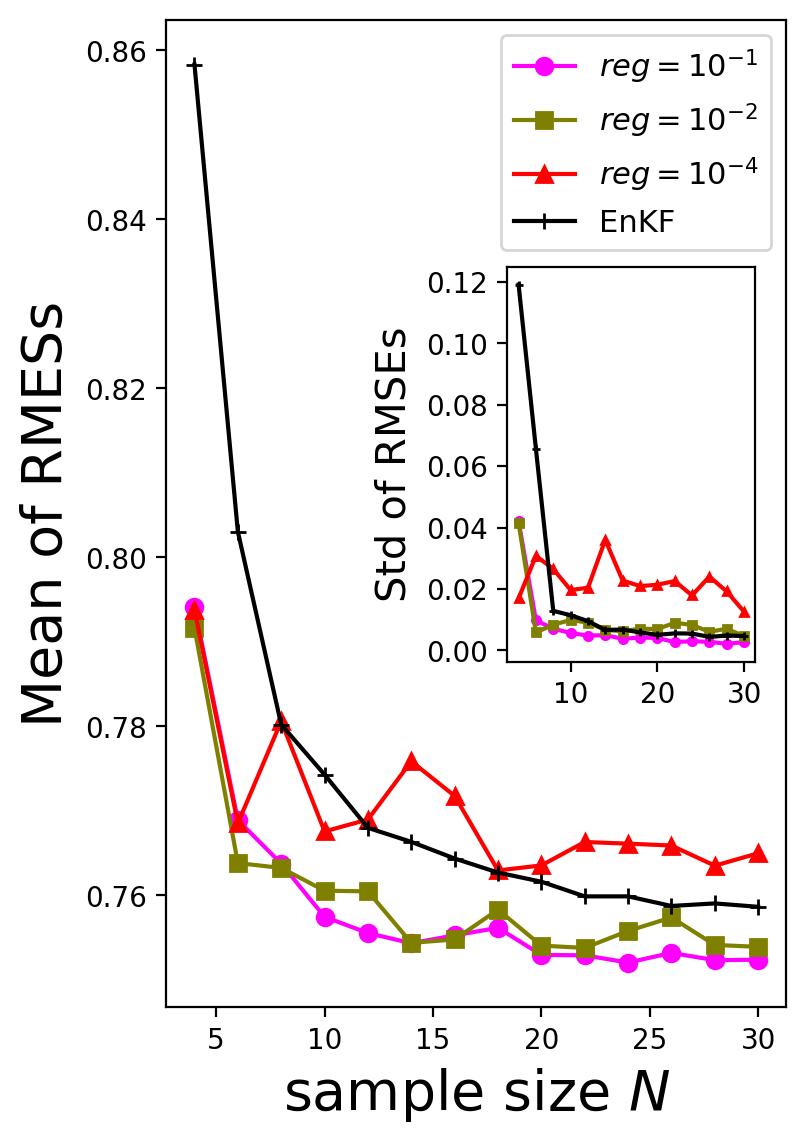}
        \caption{$q = \frac{1.4 \Delta t}{10}$}
    \label{fig:filter_KAKMED_L}
\end{subfigure}
\hfill
\begin{subfigure}{0.32\textwidth}
    \centering
    \includegraphics[scale = 0.47]{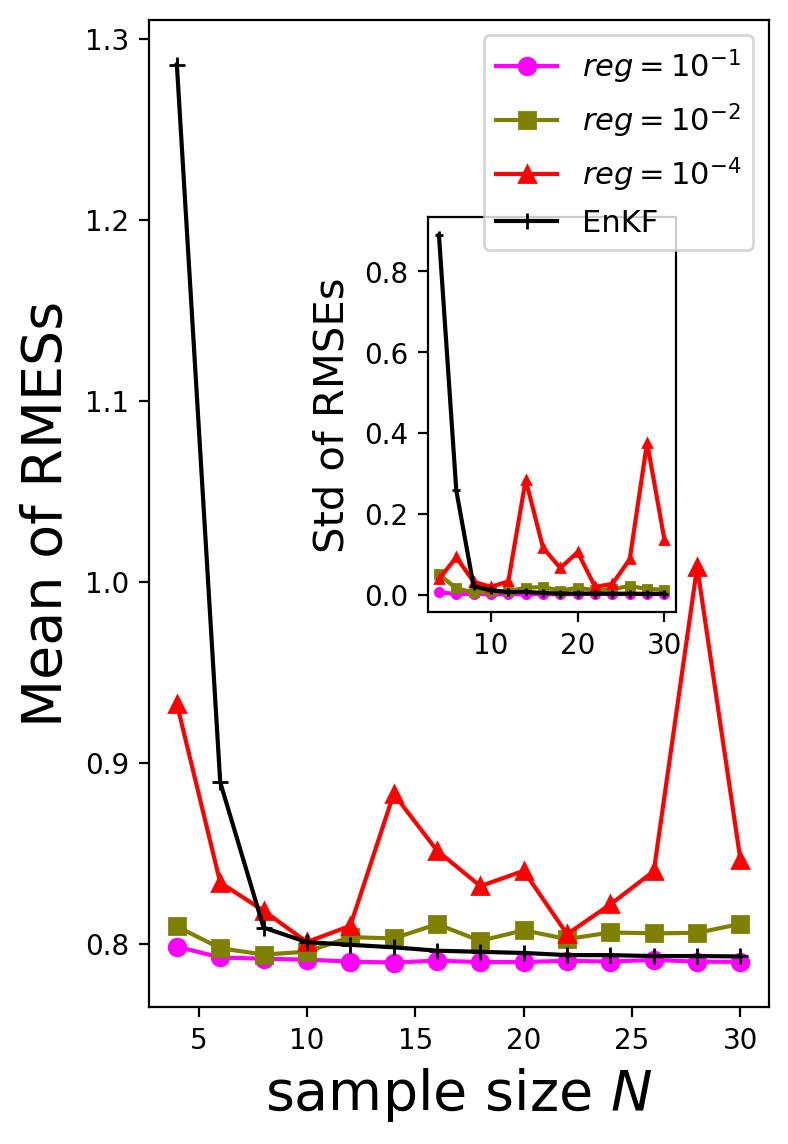}
    \caption{$q = \frac{1.4 \Delta t}{1000}$.}
    \label{fig:filter_KAKMED_XS}
\end{subfigure}
\caption{EnKF vs Kalman-adjusted KMED for different regularisations, for large model error (left), small model error (middle),  and tiny model error (right).}
\label{fig:KAKMED_benchmark}
\end{figure}

Denoting the approximate posterior samples at time $t_{j}$ as $X_{j} := \left ( x_{i,t_{j}}, y_{i,t_{j}}, z_{i,t_{j}}\right )_{i=1}^{N}$, the accuracies of the filters are measured in terms of the root mean squared errors (RMSE)
\begin{equation}
    \label{eq:RMSE}
    \text{RMSE}(t_{j}) := \sqrt{\tfrac{1}{3}\norm{\beta_{j} - \Bar{X}_{j}}},
\end{equation}
where $\Bar{X}_{j}:= \tfrac{1}{N}\sum_{i=1}^{N} \left ( x_{i,t_{j}}, y_{i,t_{j}}, z_{i,t_{j}}\right )$ are the sample means of the approximate posteriors, and $\beta_j$ are the true observations generated by \eqref{eq:observation model}. Following \cite{stordal2021p}, we discard the initial RMSEs up to $j \le 20$, and plot the time-averaged RMSEs (that is, averaged over the remaining indices $j > 20$) as a function of the ensemble size and for different regularisation strengths. The results are shown in Figure \ref{fig:KMED_benchmark} for KMED and in Figure \ref{fig:KAKMED_benchmark} for KA-KMED.

\emph{Observations:} Firstly, we note that all methods show degraded performance in terms of average RMSE as the model error decreases (see the values on the vertical axes in Figures \ref{fig:KMED_benchmark} and \ref{fig:KAKMED_benchmark}), owing to the fact that the filtering problem becomes less Gaussian and thus more difficult. 

Secondly, it is remarkable that the RMSE is not always monotonically decreasing as a function of $N$. Mainly, this phenomenon can be observed when the regularisation is fairly small, see Figures \ref{fig:filter_KMED_L}, \ref{fig:filter_KMED_XS} and \ref{fig:filter_KAKMED_XS}. Larger values of the regularisation tend to restore monotonicity, albeit at the expense of overall performance in terms of RMSE, see, for instance Figure \ref{fig:filter_KMED_L}. These observations can be explained with the nature of the linear system \eqref{eq:IPS linear_system}: For a growing number of particles, the Gram matrix $\mathbf{G}_t$ tends to become ill-conditioned, as the distance between the particles tends to be smaller. Larger values of $\varepsilon$ can mitigate this problem, but the system \eqref{eq:IPS} is further away from the idealised setting with $\varepsilon = 0$, incurring errors in the posterior approximation.

Thirdly, the Kalman adjustment is fairly helpful; in particular, Kalman-adjusted KMED can outperform the EnKF across all considered ensemble sizes and model errors when the regularisation is chosen  sufficiently large ($\varepsilon = 10^{-1}$). Notice that we have chosen the regularisations for the Kalman-adjusted scheme larger than for the plain KMED. Indeed, in the adjusted scheme, we expect the additional velocity (the first term on the right-hand side of \eqref{eq:IPS ODE}) to be relatively smaller (this is precisely the motivation for including the baseline vector field $v_t^0$), and so fluctuations in the solutions to the linear system \eqref{eq:IPS linear_system} should be suppressed commensurately.

\emph{Further illustrations and experiments:} In Figure \ref{fig:Method compare Lorentz63}, we show a comparison between Kalman-adjusted KMED (with regularisation $\varepsilon = 10^{-1}$), the EnKF and the mapping particle filter (MPF). This experiment showcases the potential of Kalman-adjusted KMED, providing more accurate filtering estimates than the mapping particle filter or the EnKF, accross all considered ensemble sizes and model errors. In particular, note that MPF is unstable for small or tiny model errors, in agreement with the results by \citet{stordal2021p}. The comparison to the MPF is instructive, since KMED and the MPF approach the task of estimating scores in different ways -- see the discussion around \eqref{eq:IBP_ KEM for score estimation} in the introduction.

To illustrate the results from Figures \ref{fig:KMED_benchmark} and \ref{fig:KAKMED_benchmark}, we compare the means of the approximate posteriors to the true observations in Figure \ref{fig:Lorentz63_example}, for  $N = 4$ ensemble members and model error $q = \frac{1.4 \Delta t}{1000}$. We only include plots for EnKF and KA-KMED, as the difference between KMED and KA-KMED is difficult to see in these plots. We observe that KA-KMED produces reliable filter estimates across the whole time interval ($400$ assimilation steps). In contrast, the EnKF looses track of the true signal and produces erroneous estimates after roughly $100$ assimilation steps.
\begin{figure}
\centering
\begin{subfigure}{0.32\textwidth}
    \centering
    \includegraphics[scale = 0.47]{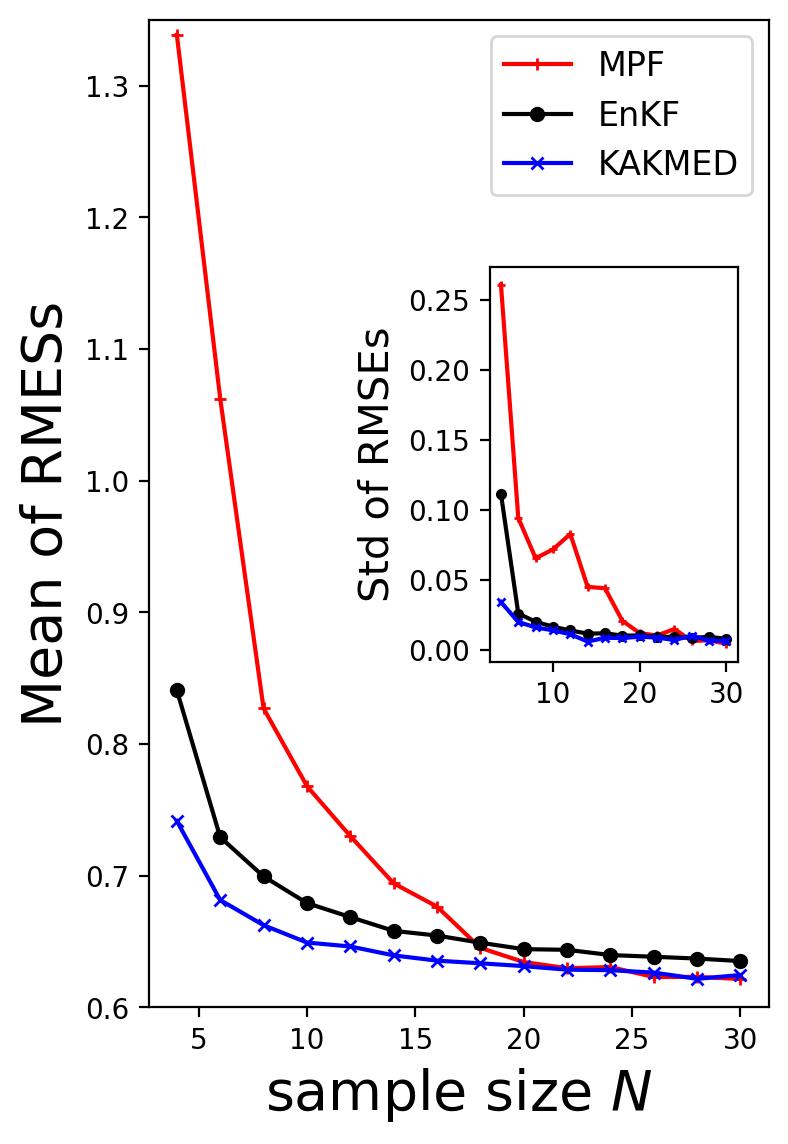}
    \caption{$q = 1.4 \Delta t$}
    \label{fig:filter_modelcompare_XL}
\end{subfigure}
\hfill
\begin{subfigure}{0.32\textwidth}
    \centering
    \includegraphics[scale = 0.47]{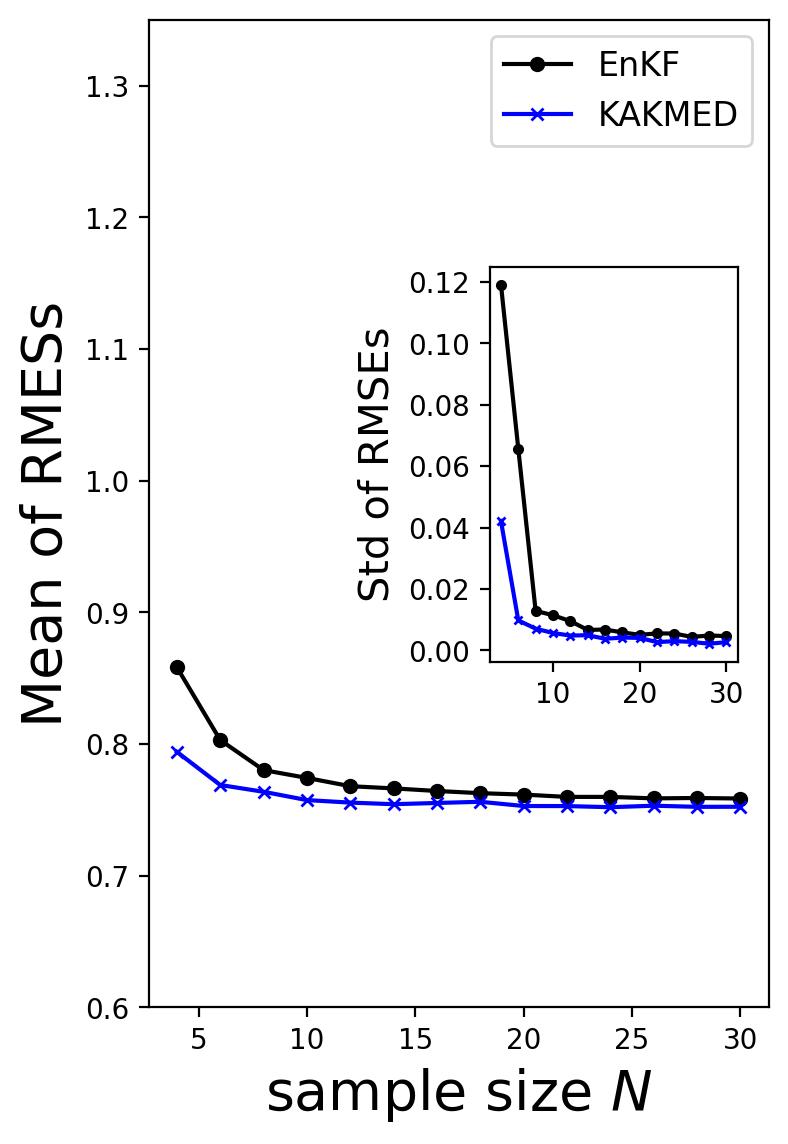}
        \caption{$q = \frac{1.4 \Delta t}{10}$}
    \label{fig:filter_modelcompare_L}
\end{subfigure}
\hfill
\begin{subfigure}{0.32\textwidth}
    \centering
    \includegraphics[scale = 0.47]{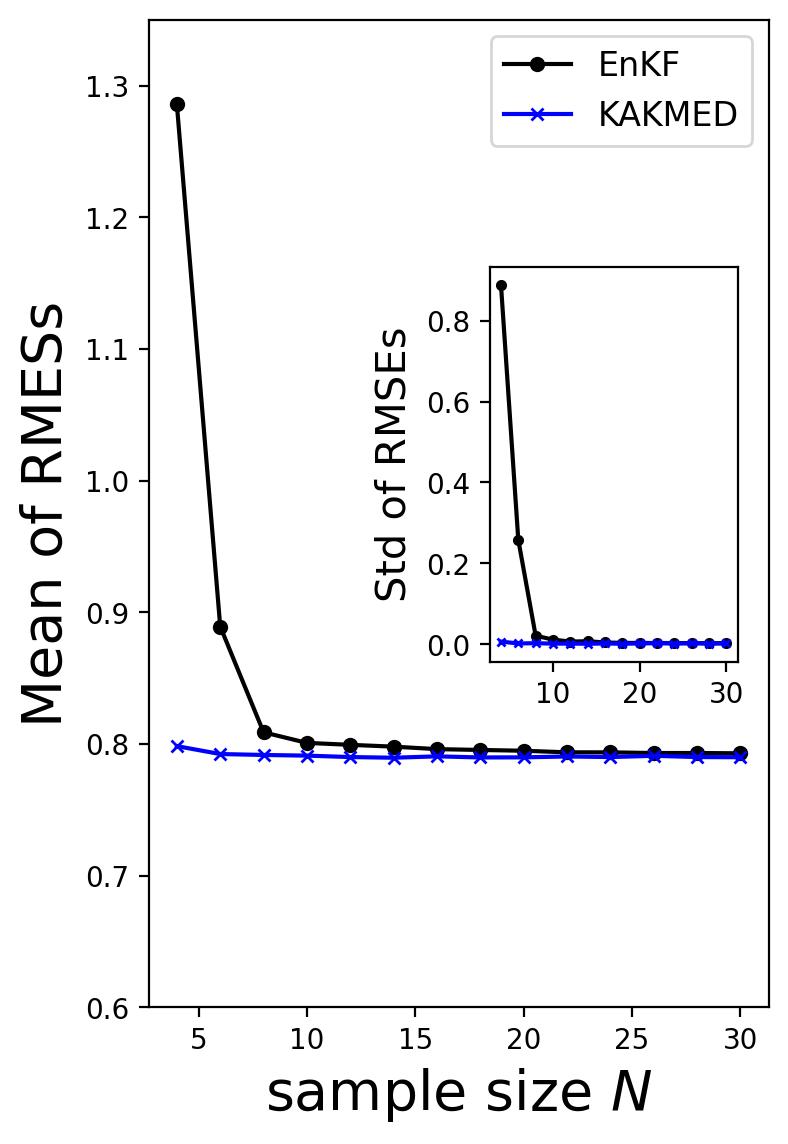}
    \caption{$q = \frac{1.4 \Delta t}{1000}$.}
    \label{fig:filter_modelcompare_XS}
\end{subfigure}
\caption{Kalman-adjusted KMED (with regularisation $\varepsilon = 10^{-1}$) vs EnKF and MPF. The MPF is implemented with 200 iterations for the gradient descent phase using the Adagrad learning rate. The MPF becomes very unstable for small (middle) and tiny (right) model errors, and the corresponding RMSEs are too large to plot.}
\label{fig:Method compare Lorentz63}
\end{figure}
\begin{remark}[Setting of the experiments]
 In our experiments, the initialisation of the Lorenz-63 system at $(-0.587, -0.563, 16.870)$ is the same as the setup used by \cite{reich2015probabilistic}, a commonly selected point to demonstrate the chaotic nature of the Lorenz-63 system. For the Runge-Kutta time step $\delta t = 0.01$ and the observation interval $\Delta t = 0.1$, our settings represent a compromise between those recommended by \cite{reich2015probabilistic} and \cite{stordal2021p}. Specifically, \cite{reich2015probabilistic} employ $\delta t = 0.001$ and $\Delta t = 0.05$ to reduce system chaos through high-precision Runge-Kutta steps, leading to more stable filtering outcomes with fewer errors. In contrast, \cite{stordal2021p} utilize $\delta t = 0.01$ and $\Delta t = 0.25$ to enhance the system's chaotic dynamics, thereby increasing the challenge of filtering and highlighting performance differences among various methods. However, the settings in \cite{stordal2021p} pose practical challenges for maintaining stability in KMED, even with a range of regularisation adjustments. Preliminary experiments indicate that KMED can be made stable in this more challenging setting by increasing the number of discretisation time steps in Algorithm \ref{alg:KME dynamics} (we used approximately 400), achieving similar superior performances.
\end{remark}

\textbf{Lorenz-96.} We set $d = 10$, $F = 8$, $\delta t = 0.001$, $\Delta t = 0.05$, and $N_{\text{assi}} = 400$. The system is initialised randomly from $(x^{1}_{0}, x^{2}_{0}, \ldots, x^{10}_{0}) \sim \mathcal{N}(0, I_{10 \times 10})$, and the covariance of the observation error is set to $R = 0.7 I_{10 \times 10}$. For the filtering step, we initiate samples at $t = 0$ from ${\mathcal{N}((x^{1}_{0}, x^{2}_{0}, \ldots, x^{10}_{0}), 0.01 I_{10 \times 10})}$.
Due to the high computational cost, we skip the detailed comparisons for different ensemble sizes and model errors, and only show the equivalent of Figure \ref{fig:Lorentz63_example} as an illustration in Figure \ref{fig:Lorentz96_example}. In this experiment, we take the ensemble size to be $N = 8$, and $Q = (1.4 \Delta t \times 10^{-9}) I_{10\times 10}$ as the model error. For KMED, we take the bandwidth of the RBF kernel as $\sigma = 3.5$ and the regularisation as $\varepsilon = 10^{-5}$. Again, the plots for KMED and KA-KMED are difficult to distinguish. Therefore, for variation, we only plot the approximate posterior means for KMED. Similarly as in the experiment shown in Figure \ref{fig:Lorentz63_example}, KMED produces accurate filter estimates, while the EnKF looses track after an initial period (approximately 200 assimilation steps).
\section{Discussion and Outlook}
\label{sec:outlook}
We have developed KME-dynamics as an algorithmic approach for general Bayesian inference, which does not require the explicit score of the target posterior, and is therefore applicable to settings in data assimilation. Based on kernel mean embeddings,  KME-dynamics captures statistical information in reproducing kernel Hilbert spaces, and moves the ensemble members in an interacting particle system accordingly. We have connected KME-dynamics with score-based generative modelling, and constructed a new kernel-based estimator for the score function. The framework of KME-dynamics seamlessly accommodates the Kalman-Bucy filter, as well as a predefined baseline dynamics. The latter modification has shown particular promise in our numerical experiment for the Lorenz-63 and Lorenz-96 systems, combining a Kalman update with a nonlinear refinement. We also extend the framework of KME-dynamics by weighting the samples, correcting numerical and statistical error. Across all our experiments, we have observed benefits from choosing a characteristic kernel, incorporating information through the KME in a lossless manner.

In future work, we plan to build on the exposed  connection between KME-dynamics, kernelised diffusion maps and Tikhonov functionals, leveraging insights from statistical learning theory to improve both the implementation of KME-dynamics as well as its theoretical understanding. It is also of interest to clarify the connection of KME-dynamics to Stein variational gradient descent \citep{liu2016stein}, and to its geometrical underpinnings \citep{nusken2023geometry,nusken2023stein}. Finally, it may be interesting to consider more general interpolations than \eqref{eq:interpolation intro}, in the spirit of \cite{syed2021parallel,Syed_2022}, or to extend KME-dynamics to applications involving optimal control, in the spirit of \cite{bevanda2024data}.

\begin{figure}
\centering
\begin{subfigure}{\textwidth}
    \centering
    \includegraphics[scale = 0.7]{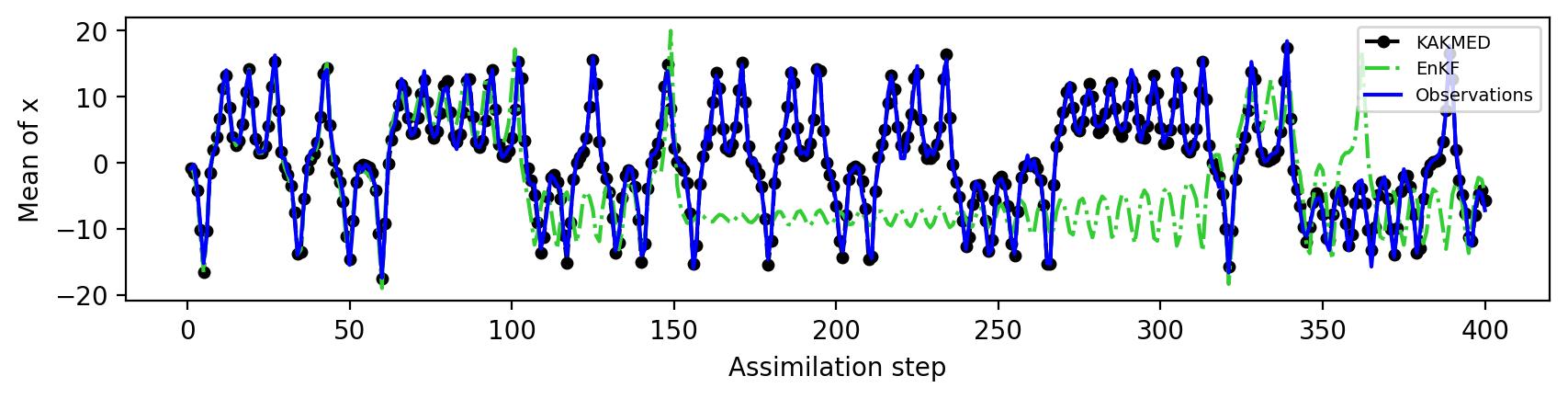}
    \label{fig:Lorentz_x}
\end{subfigure}
\hfill
\begin{subfigure}{\textwidth}
    \centering
    \includegraphics[scale = 0.7]{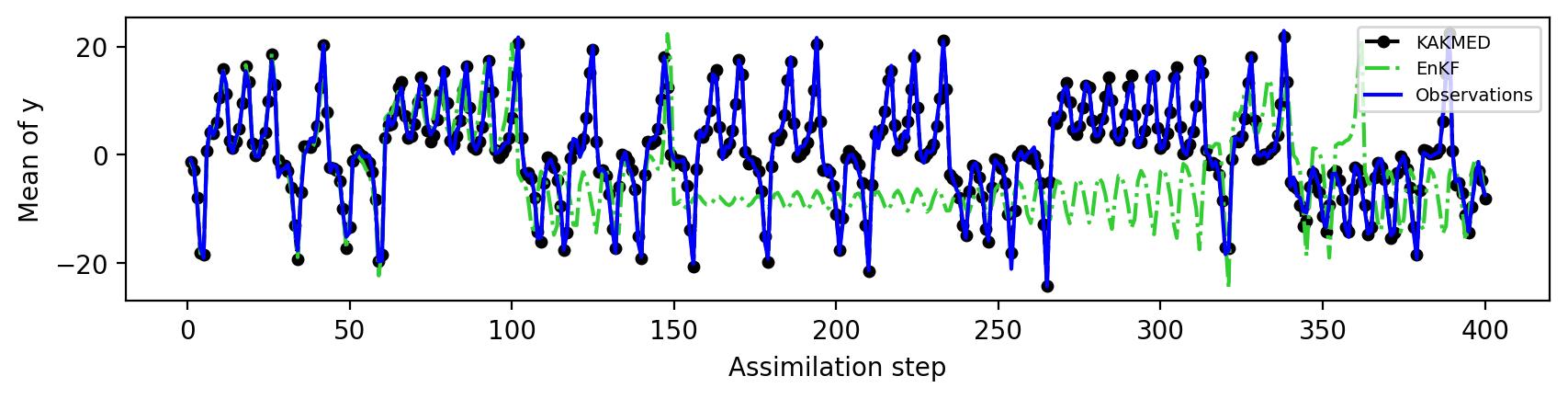}
    \label{fig:Lorentz_y}
\end{subfigure}
\hfill
\begin{subfigure}{\textwidth}
    \centering
    \includegraphics[scale = 0.7]{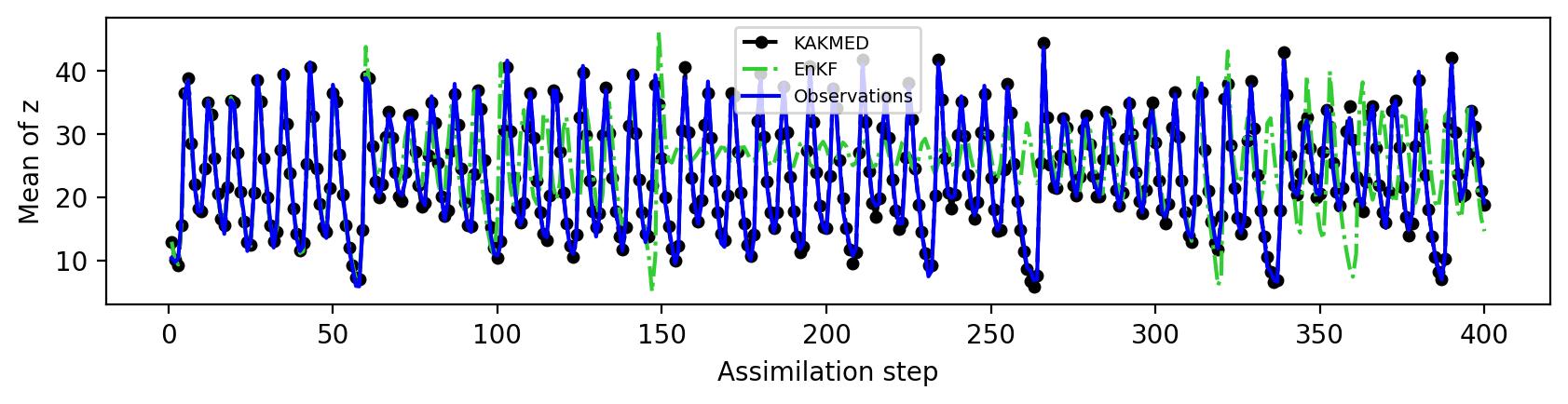}
    \label{fig:Lorentz_z}
\end{subfigure}
\caption{Lorenz-63: We compare the approximate posterior means obtained by 
Kalman adjusted-KMED and EnKF to the true observations. While Kalman-adjusted KMED is able to track the dynamics accurately, the EnKF produces erroneous filtering estimates after an initial period of approximately 100 assimilation steps.}
\label{fig:Lorentz63_example}
\end{figure}
\begin{figure}
\centering
\begin{subfigure}{\textwidth}
    \centering
    \includegraphics[scale = 0.7]{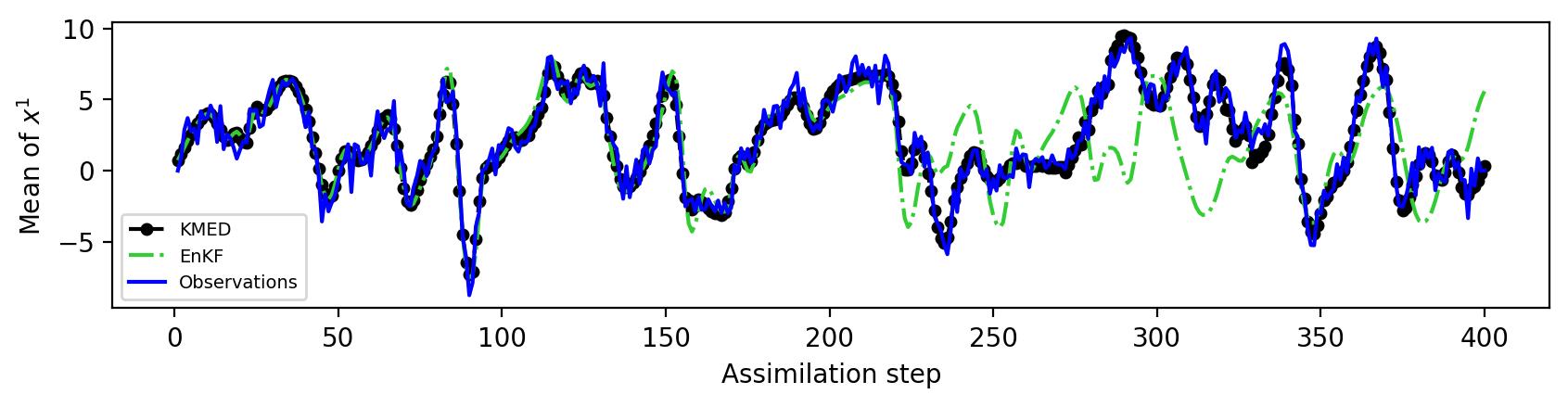}
    \label{fig:Lorentz96_x}
\end{subfigure}
\hfill
\begin{subfigure}{\textwidth}
    \centering
    \includegraphics[scale = 0.7]{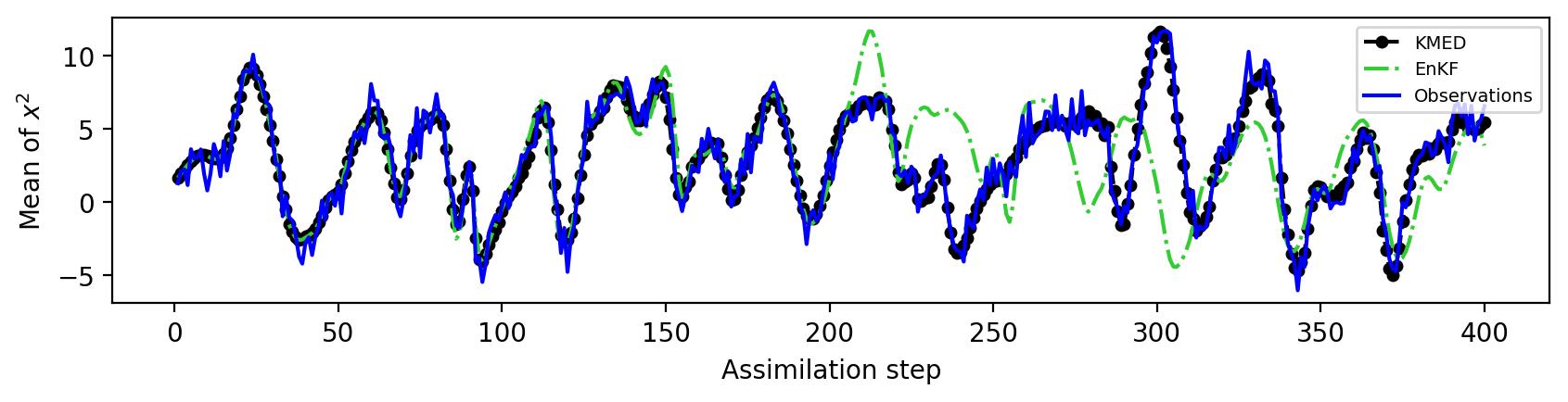}
    \label{fig:Lorentz96_y}
\end{subfigure}
\hfill
\begin{subfigure}{\textwidth}
    \centering
    \includegraphics[scale = 0.7]{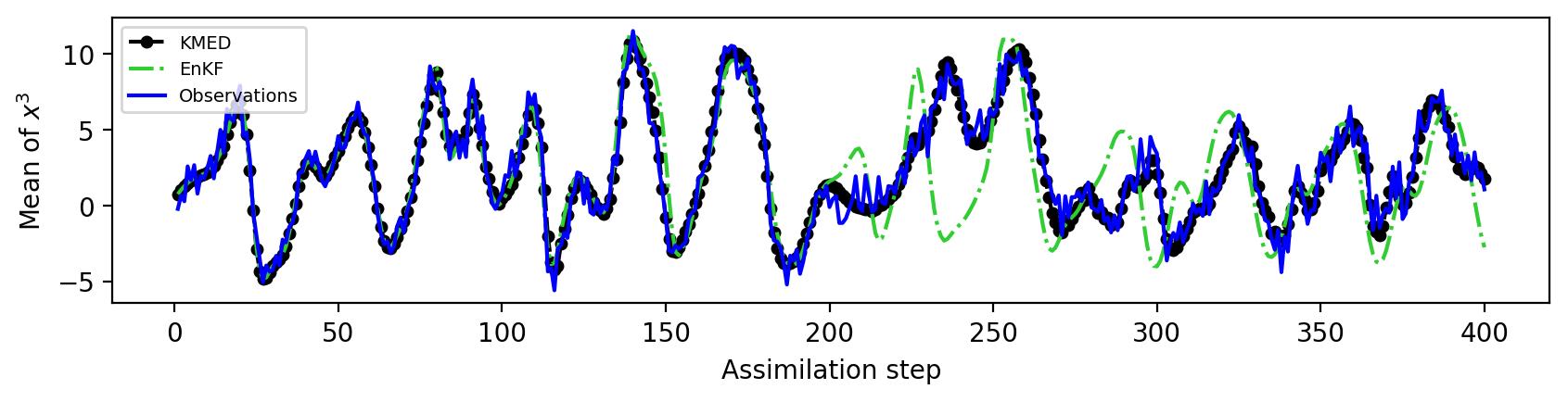}
    \label{fig:Lorentz96_z}
\end{subfigure}
\caption{Lorenz-96: We compare the approximate posterior means (first three coordinates) obtained by 
KMED and EnKF to the true observations. While  KMED is able to track the dynamics accurately, the EnKF produces erroneous filtering estimates after an initial period of approximately 200 assimilation steps.}
\label{fig:Lorentz96_example}
\end{figure}

\newpage

\appendix
\section{Proofs and calculations}
\label{app:calc}

\textbf{Proof of Lemma \ref{lem:KME ODE}}

For \eqref{eq:KME cov}, 
we begin by computing the time derivative of the normalising constant,
\begin{equation*}
\partial_t Z_t = \partial_t \int_{\mathbb{R}^d} e^{-th} \, \mathrm{d}\pi_0 = - \int_{\mathbb{R}^d} h e^{-th} \, \mathrm{d} \pi_0 = - Z_t \int_{\mathbb{R}^d} h \, \mathrm{d}\pi_t.
\end{equation*}
Using this, we see that 
\begin{equation*}
 \partial_t \pi_t = - h \pi_t   - \frac{\partial_t Z_t e^{-th} \pi_0}{Z_t^2} = -h\pi_t + \pi_t \int_{\mathbb{R}^d} h \, \mathrm{d}\pi_t.
\end{equation*}
For the kernel mean embedding, we therefore obtain
\begin{equation*}
\partial_t \Phi_k(\pi_t) = \int_{\mathbb{R}^d} k(\cdot, x) \partial_t\pi_t(\mathrm{d}x) = - \int_{\mathbb{R}^d} k(\cdot,x) \left( h(x) - \int_{\mathbb{R}^d} h \, \mathrm{d}\pi_t \right) \pi_t(\mathrm{d}x),
\end{equation*}
which can be rewritten in the form \eqref{eq:KME cov}.

For \eqref{eq:KME dyn}, notice that the continuity equation associated to \eqref{eq:ODE} reads
\begin{equation*}
\partial_t \rho_t + \nabla \cdot (\rho_t (v_t+ v_t^0)) = 0.
\end{equation*}
For the kernel mean embedding, we thus obtain
\begin{subequations}
\begin{align*}
\partial_t \Phi_k(\rho_t) & = - \int_{\mathbb{R}^d} k(\cdot, x) \nabla_x \cdot (\rho_t (v_t+ v_t^0))(x) \, \mathrm{d}x
 = \int_{\mathbb{R}^d} \nabla_x k(\cdot,x) \cdot (v_t + v_t^0)(x) \, \mathrm{d}x,
\end{align*}
\end{subequations}
as required. 

\textbf{Proof of Lemma \ref{lem:G properties}}

1.) Clearly, $G_{\rho, C}$ is symmetric, by the symmetry of $C$. To check positive definiteness, we fix $N \in \mathbb{N}$, choose $(\alpha_i)_{i=1}^N \subset \mathbb{R}$ and $(x_i)_{i=1}^N \subset \mathbb{R}^d$, and notice that
\begin{align*}
\sum_{i,j=1}^N \alpha_{i} \alpha_{j} G_{\rho,C}(x_i,x_j) & = \int_{\mathbb{R}^d} \left( \sum_{i=1}^N \alpha_i \nabla_z k(x_i,z)\right) \cdot C \left( \sum_{i=1}^N \alpha_i \nabla_z k(x_i,z)\right) \rho(\mathrm{d}z) 
\\
& = \int_{\mathbb{R}^d} \left| C^{1/2}\sum_{i=1}^N \alpha_i \nabla_z k(x_i,z) \right|^2 \rho(\mathrm{d}z) \ge 0
\end{align*}
2.) This follows from the general theory of regularised inverse problems, see, for example \citet[Section 2.2]{kirsch2011introduction}. More specifically, the integral operator 
\begin{equation*}
 B: \quad \alpha \mapsto \int_{\mathbb{R}^d} G_{\rho,C} \alpha(y) \rho(\mathrm{d}y) + \varepsilon \alpha,
\end{equation*}
mapping $L^2(\rho)$ into itself, is self-adjoint, bounded, and satisfies the coercivity estimate
\begin{equation}
\langle \alpha, B \alpha \rangle_{L^2(\rho)} \ge \varepsilon \Vert \alpha \Vert^2_{L^2(\rho)}, \qquad \alpha \in L^2(\rho).
\end{equation}
Therefore, $B$ admits a bounded inverse (by the open mapping theorem \citep[Chapter 2]{rudin1974functional}), implying the statement.

\textbf{Proof of Proposition \ref{pro:ReproducingKF}}

We will need the following lemma, see, for instance, \cite{michalowicz2009isserlis}.
\begin{lemma}[Gaussian moments]\label{le:moments_normal}
For a Gaussian random variable $X \sim \mathcal{N}(\mu,\Sigma)$ in $\mathbb{R}^d$, denote $\Omega_{ij}:= \mu_{i} \mu_{j}$. Then we have the following expressions for the moments:
\begin{subequations}
\begin{align}
    \E[X_i]  = & \mu_{i}, \qquad 
    \E[X_i X_j] = \Sigma_{ij} + \Omega_{ij}, \label{eq:secondmoment}
    \\
    \E[X_i X_j X_k]  =  &\mu_{i}\Sigma_{jk} + \mu_{j}\Sigma_{ik} +\mu_{k}\Sigma_{ij} + \Omega_{ij} \mu_{k}, \label{eq:thirdmoment}
    \\
    \E[X_i X_j X_k X_l]  = & \Sigma_{ij}\Sigma_{kl} + \Sigma_{ik}\Sigma_{jl} + \Sigma_{il}\Sigma_{jk} + \Omega_{ij}\Sigma_{kl} + \Omega_{ik}\Sigma_{jl} +
    \\
    &  + \Omega_{il}\Sigma_{jk} + \Omega_{jk}\Sigma_{il} + \Omega_{jl}\Sigma_{ik} + \Omega_{kl}\Sigma_{ij} + \Omega_{ij}\Omega_{kl}, \label{eq:fourthmoment}
\end{align}
\end{subequations}
for $i,j,k = 1,\ldots, d$.
\end{lemma}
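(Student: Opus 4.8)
The plan is to derive all four identities simultaneously from the moment generating function of $X$. Since $X \sim \mathcal{N}(\mu,\Sigma)$ possesses moments of every order and its moment generating function $M(t) = \E[e^{t^\top X}] = \exp\!\big(t^\top \mu + \tfrac{1}{2} t^\top \Sigma t\big)$ is finite in a neighbourhood of the origin, differentiation under the expectation is justified, and the mixed partial derivatives evaluated at $t = 0$ recover the moments,
\begin{equation}
\nonumber
\E[X_{i_1} \cdots X_{i_n}] = \left. \frac{\partial^n M}{\partial t_{i_1} \cdots \partial t_{i_n}} \right|_{t = 0}.
\end{equation}
Writing $g(t) := t^\top \mu + \tfrac{1}{2} t^\top \Sigma t$, so that $M = e^{g}$, the key simplification is that $g$ is quadratic: we have $\partial_i g = \mu_i + (\Sigma t)_i$ (hence $\partial_i g|_{t=0} = \mu_i$), $\partial_i \partial_j g = \Sigma_{ij}$, and every derivative of $g$ of order three or higher vanishes identically.

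First I would compute the low-order derivatives directly. From $\partial_i M = (\partial_i g) M$ one reads off $\E[X_i] = \mu_i$, and from $\partial_i \partial_j M = \big(\Sigma_{ij} + (\partial_i g)(\partial_j g)\big) M$ one obtains $\E[X_i X_j] = \Sigma_{ij} + \mu_i \mu_j = \Sigma_{ij} + \Omega_{ij}$, establishing \eqref{eq:secondmoment}. Differentiating once more, and using that the third derivative of $g$ vanishes, gives
\begin{equation}
\nonumber
\partial_i \partial_j \partial_k M = \Big( (\partial_i g)\Sigma_{jk} + (\partial_j g)\Sigma_{ik} + (\partial_k g)\Sigma_{ij} + (\partial_i g)(\partial_j g)(\partial_k g) \Big) M,
\end{equation}
which at $t = 0$ yields \eqref{eq:thirdmoment} after identifying $\mu_i \mu_j \mu_k = \Omega_{ij}\mu_k$.

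The remaining step, and the only place where care is needed, is the fourth-order derivative $\partial_i\partial_j\partial_k\partial_l M|_{t=0}$. Because every derivative of $g$ beyond second order is zero, differentiating $e^{g}$ repeatedly produces exactly those terms in which the four indices are distributed among factors $\partial g$ (each contributing a $\mu$ at $t=0$) and factors $\partial\partial g$ (each contributing a $\Sigma$), with no isolated higher-order factor ever appearing. Organising the resulting terms by the number of $\Sigma$-factors produces the three complete pairings $\Sigma_{ij}\Sigma_{kl}+\Sigma_{ik}\Sigma_{jl}+\Sigma_{il}\Sigma_{jk}$, the six mixed terms $\Omega_{\bullet\bullet}\Sigma_{\bullet\bullet}$ obtained by choosing which pair of indices carries the two means, and the single quartic-mean term $\Omega_{ij}\Omega_{kl}$, for a total of $3 + 6 + 1 = 10$ contributions matching \eqref{eq:fourthmoment} exactly.

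The main obstacle is therefore purely the combinatorial bookkeeping in this fourth-order expansion. I would keep it under control either by induction on the derivative order (each further derivative either upgrades a $\partial g$ factor to a $\Sigma$ or appends a fresh $\partial g$ factor, so that at $t=0$ the surviving terms are precisely the partitions into singletons and pairs) or, equivalently, by repeatedly applying the Gaussian integration-by-parts identity $\E[(X_i - \mu_i) f(X)] = \sum_j \Sigma_{ij}\, \E[\partial_j f(X)]$ with $f = X_j X_k X_l$. The latter reproduces the same recursion and has the pleasant feature that the lower-order moments \eqref{eq:secondmoment}--\eqref{eq:thirdmoment} appear as intermediate stepping stones, so that \eqref{eq:fourthmoment} follows by substitution once the third moment is in hand.
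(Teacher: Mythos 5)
Your proof is correct. Note, however, that the paper does not actually prove this lemma: it is stated with a pointer to the literature (``see, for instance \cite{michalowicz2009isserlis}''), i.e.\ the authors defer to known Isserlis-type moment formulae for non-centred Gaussians. Your moment-generating-function derivation is a clean, self-contained substitute. The computation checks out: with $g(t) = t^\top\mu + \tfrac12 t^\top \Sigma t$ one has $\partial_i g|_{t=0} = \mu_i$, $\partial_i\partial_j g = \Sigma_{ij}$, and all higher derivatives of $g$ vanish, so each differentiation of $e^g$ either appends a fresh $\partial g$ factor or upgrades an existing $\partial g$ to a constant $\Sigma$; at $t=0$ the surviving terms are therefore in bijection with partitions of the index set into singletons (each contributing a $\mu$) and pairs (each contributing a $\Sigma$). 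For four indices this gives exactly $3$ double-pair terms, $6$ one-pair terms $\Omega_{\bullet\bullet}\Sigma_{\bullet\bullet}$, and the quartic term $\mu_i\mu_j\mu_k\mu_l = \Omega_{ij}\Omega_{kl}$, and an explicit term-by-term check confirms agreement with \eqref{eq:fourthmoment} (likewise \eqref{eq:secondmoment}--\eqref{eq:thirdmoment}, using $\mu_i\mu_j\mu_k = \Omega_{ij}\mu_k$). Your alternative recursion via the Gaussian integration-by-parts identity $\E[(X_i-\mu_i)f(X)] = \sum_j \Sigma_{ij}\E[\partial_j f(X)]$ is equally valid and arguably cleaner for bookkeeping, since it builds \eqref{eq:fourthmoment} directly on top of \eqref{eq:thirdmoment}. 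What the two routes buy: the paper's citation keeps the appendix short, while your derivation makes it self-contained and exposes the partition structure that underlies the moment manipulations in the proof of Proposition \ref{pro:ReproducingKF}. One cosmetic remark: the justification of differentiating under the expectation is immediate here because the Gaussian MGF is finite on all of $\mathbb{R}^d$, so your brief appeal to finiteness in a neighbourhood of the origin is more than sufficient.
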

Introducing the short-hand notation $\rho[h]:= \E_{\rho_{t}}(h) = \int_{\R ^{d}} h(a)\, \rho_{t}(\di a)$, setting $v^0_t = 0$ and fixing $y \in \mathbb{R}^d$, we first compute the right-hand side $f_{t}$ of \eqref{eq:int MF}:
\begin{align*}
    &= \int_{\R ^{d}}(x^{\top} y + 1)^{2} \left(h(x) - \rho_{t}[h] \right) \rho_{t}(\di x) = \int_{\R ^{d}}(y^{\top} x x^{\top} y + 2x^{\top} y  + 1) \left(h(x) - \rho_{t}[h] \right)\rho_{t} (\di x)\\
    &= \int_{\R ^{d}}(y^{\top} x x^{\top} y + 2x^{\top} y) \left(h(x) - \rho_{t}[h] \right)\rho_{t}(\di x)\\
    &= \int_{\R ^{d}} \left\{y^{\top} x x^{\top} y \left(h(x) - \rho_{t}[h] \right) + 2x^{\top} y \left(h(x) - \rho_{t}[h] \right)\right\} \rho_{t} (\di x) = : \circled{1} + \circled{2},
\end{align*}
where we denote $$\circled{1} :=  \int_{\R ^{d}} y^{\top} x x^{\top} y \left(h(x) - \rho_{t}[h] \right)\rho_{t} (\di x) \quad \text{and} \quad \circled{2} := \int_{\R ^{d}}2x^{\top} y \left(h(x) - \rho_{t}[h] \right) \rho_{t}(\di x).$$ 
Now we compute
\begin{align*}
    &\circled{1} = \int_{\R ^{d}}y^{\top} x x^{\top} y (h(x) - \rho_{t}[h])\rho_{t}(\di x) = \int_{\R ^{d}}y^{\top} x x^{\top} y (h(x) - \rho_{t}[h])\rho_{t}(\di x) 
    \\
    & = \int_{\R ^{d}}y^{\top} x x^{\top} y \left\{\tfrac{1}{2}(Hx - \beta)^{\top} R^{-1} (Hx - \beta) - \rho_{t}[h]\right\} \rho_{t}(\di x) 
    \\
    &= \tfrac{1}{2}\int_{\R ^{d}}y^{\top} x x^{\top} y x^{\top} H^{\top}R^{-1}Hx \rho_{t}(\di x) - \tfrac{1}{2}\int_{\R ^{d}}y^{\top} x x^{\top} y x^{\top} H^{\top}R^{-1} \beta\rho_{t}(\di x) 
    \\
    & - \tfrac{1}{2}\int_{\R ^{d}}y^{\top} x x^{\top} y \beta^{\top} R^{-1}H x \rho_{t}(\di x) + \tfrac{1}{2}\int_{\R ^{d}}y^{\top} x x^{\top} y \beta^{\top} R^{-1} \beta \rho_{t}( \di x)\\
    &- \int_{\R ^{d}}\tfrac{1}{2}(Ha - \beta)^{\top} R^{-1} (Ha - \beta) \rho_{t}(\di a) \cdot \int_{\R ^{d}}y^{\top} x x^{\top} y \rho_{t}(\di x).
\end{align*}
Denote $A: = H^{\top}R^{-1}H$, $B:= H^{\top}R^{-1}$, $M:= R^{-1}H$, $D:= R^{-1}$ and rewrite \circled{1} in index form:
\begin{align*}
    &\circled{1} = \tfrac{1}{2} {\E}_{X_{t}} \sum_{i,j,k,l}y_{i}x_{i}x_{j}y_{j} x_{k}A_{kl}x_{l} - \tfrac{1}{2} {\E}_{X_{t}} \sum_{i,j,k,l}y_{i}x_{i}x_{j}y_{j} x_{k}B_{kl}\beta_{l} - \tfrac{1}{2} {\E}_{X_{t}} \sum_{i,j,k,l}y_{i}x_{i}x_{j}y_{j} \beta_{k}M_{kl}x_{l} \\
    &+ \tfrac{1}{2} {\E}_{X_{t}} \sum_{i,j,k,l}y_{i}x_{i}x_{j}y_{j} \beta_{k}D_{kl}\beta_{l} - \frac{1}{2} \left({\E}_{X_{t}}\sum_{i,j}y_{i}x_{i}x_{j}y_{j}\right)\left({\E}_{X_{t}} \sum_{k,l}x_{k}A_{kl}x_{l}\right)\\
    &+ \tfrac{1}{2} \left({\E}_{X_{t}}\sum_{i,j}y_{i}x_{i}x_{j}y_{j}\right)\left({\E}_{X_{t}} \sum_{k,l}x_{k}B_{kl}\beta_{l}\right) + \tfrac{1}{2} \left({\E}_{X_{t}} \sum_{i,j}y_{i}x_{i}x_{j}y_{j}\right)\left({\E}_{X_{t}}\sum_{k,l}\beta_{k}M_{kl}x_{l} \right)\\
    &- \tfrac{1}{2} \left({\E}_{X_{t}} \sum_{i,j}y_{i}x_{i}x_{j}y_{j}\right)\left({\E}_{X_{t}} \sum_{k,l}\beta_{k}D_{kl}\beta_{l}\right).
\end{align*}
Now using Lemma \ref{le:moments_normal} and the linear properties of expectation and summation, we can compute \circled{1} and simplify:
\begin{align*}
    \circled{1} = & \tfrac{1}{2}\sum_{i,j,k,l}\{ y_{i}y_{j} [(A_{kl} C_{t}^{ik} C_{t}^{jl} + A_{kl} C_{t}^{il} C_{t}^{jk} + A_{kl}\Omega_{ik}C_{t}^{jl} + A_{kl}\Omega_{il}C_{t}^{jk} + A_{kl}\Omega_{jk}C_{t}^{il} + A_{kl}\Omega_{jl}C_{t}^{ik}\\
    &- (B_{kl}\beta_{l}C_{t}^{jk}\mu_{t}^{i} + B_{kl}\beta_{l}C_{t}^{ik}\mu_{t}^{j}) - (M_{kl}\beta_{k}C_{t}^{jl}\mu_{t}^{i} + M_{kl}\beta_{k}C_{t}^{il}\mu_{t}^{j})]\}
    \\
    & = \frac{1}{2}y^{\top} \{C_{t} A C_{t}^{\top} + C_{t} A^{\top} C_{t}^{\top} + \Omega A C_{t}^{\top} + \Omega A^{\top} C_{t}^{\top} + C_{t} A \Omega^{\top} + C_{t} A^{\top} \Omega^{\top}\\
    &- [\mu_{t} (C_{t} B \beta)^{\top} + (C_{t} B \beta) \mu_{t}^{\top}] - [\mu_{t} (C_{t} M^{\top} \beta)^{\top} + (C_{t} M^{\top} \beta) \mu_{t}^{\top}]\}y\\
    &= y^{\top} \{C_{t} A C_{t} +  \Omega A C_{t} + C_{t} A\Omega - [\mu_{t} (C_{t} B \beta)^{\top} + (C_{t} B \beta) \mu_{t}^{\top}]\} y,
\end{align*}
recalling that $\Omega_{ij} := \mu_{t}^{i}\mu_{t}^{j}$. Similarly, we can derive
\begin{align*}
    \circled{2} &= \sum_{i,j,k} \{y_{i}[(A_{jk}\mu_{t}^{j}C_{t}^{ik} + A_{jk}\mu_{t}^{k}C_{t}^{ij}) - B_{jk}\beta_{k}C_{t}^{ij} - M_{jk}\beta_{j}C_{t}^{ik}]\} = y^{\top} [2C_{t} A \mu_{t} - 2 C_{t} B \beta].
\end{align*}
We now consider the left-hand side of \eqref{eq:int MF} with $\varepsilon = 0$:
\begin{align*}
    LHS = & \int_{\R ^{d}} \int_{\R ^{d}} (2(x^{\top} y + 1)y) \cdot C_{t} (2(x^{\top} z + 1)z) \alpha_{t}(z) \rho_{t}(\di x) \rho_{t}(\di z)\\
    = &4 \int_{\R ^{d}} \int_{\R ^{d}}(x^{\top} y + 1) (x^{\top} z + 1) y^{\top} C_{t} z \alpha_{t}(z) \rho_{t}(\di x) \rho_{t}(\di z)\\
    = & 4 \Big({\E}_{X_{t}}{\E}_{Z_{t}} \sum_{i,j,k}x_{i}y_{i}x_{k}z_{k}y_{j}C_{t}^{jl}z_{l}\phi(z)+ {\E}_{X_{t}}{\E}_{Z_{t}} \sum_{i,j} x_{i}y_{i}y_{j}C_{t}^{jl}z_{l}\phi(z)\\
    &+ {\E}_{X_{t}}{\E}_{Z_{t}} \sum_{i,j} x_{i}z_{i}y_{j}C_{t}^{jl}z_{l}\alpha_{t}(z) +  {\E}_{X_{t}}{\E}_{Z_{t}} \sum_{i,j} y_{i}C_{t}^{il} z_{l}\alpha_{t}(z)\Big),
\end{align*}
where $Z$ is an independent identically distributed copy of $X$.
Defining $(\Psi)_{ij} := \psi_{ij} := -\E_{Z_{t}} [z_{i}z_{j} \alpha_{t}(z)]$ and $(\eta)_{i}:= \eta_{i} := -\E_{Z_{t}} [z_{i}\alpha_{t}(z)]$, we can write this as
\begin{align*}
    LHS &= -4 \left(\sum_{i,j,k}y_{i}y_{j}(C_{t}^{ik} + \Omega_{ik})C_{t}^{jl}\psi_{lk} + \sum_{i,j} y_{i}y_{j}\mu_{t}^{i}C_{t}^{jl}\eta_{l} + \sum_{i,j} y_{j} \mu_{t}^{i} C_{t}^{jl}\psi_{li} + \sum_{i} y_{i} C_{t}^{il}\eta_{l}\right)\\
    &= -4y^{\top}((C_{t} + \Omega) \Psi C_{t}  + \mu_{t} \eta^{\top}C_{t})y -4y^{\top}(C_{t} \Psi \mu_{t} + C_{t} \eta).
\end{align*}
 To solve $LHS = RHS$ for all $y$, it is equivalent to solve the following system of equations:
 \begin{align*}
    C_{t} (8\Psi + 2A) C_{t} &+ \Omega (4\Psi + 2A) C_{t} + C_{t} (4\Psi + 2A) \Omega + 8\mu_{t} \eta^{\top}C_{t} + 8C_{t} \eta \mu_{t}^{\top} \\
    &- 2\mu_{t} (C_{t} B \beta)^{\top} - 2(C_{t} B \beta) \mu_{t}^{\top} = 0,\\
    4 (C_{t}\Psi\mu_{t} + C_{t}\eta) &+ 2C_{t} A \mu_{t} - 2 C_{t} B \beta = 0.
 \end{align*}
Subtracting the second equation and the transpose of the second equation from the first equation we obtain
\[
8 C_{t} \Psi C_{t} + 2 C_{t} A C_{t} = 0,
\]
which implies $
\Psi = - \frac{1}{4}A$. 
Substituting this back into the second equation we see that
\[
\eta = \frac{1}{2}B \beta - \frac{1}{4}A \mu_{t}.
\]
Finally, we compute the velocity field on the right-hand side of \eqref{eq:MF ODE}:
\begin{align*}
    v(x) = - C_{t}\int_{\R ^{d}} \nabla_{x}k(x, z) \alpha_{t}(z) \rho_{t}(\di z) =  -C_{t}\int_{\R ^{d}}  (2(x^{\top} z + 1)z) \alpha_{t}(z) \rho_{t}(\di z).
\end{align*}
In index form this reads
\begin{align*}
    v_{j}(x) &= -2 \sum_{i,l}C_{t}^{jl} x_{i}{\E}_{Z_{t}}[z_{i}z_{l}\alpha_{t}(z)] - 2 \sum_{l} C_{t}^{jl}{\E}_{Z_{t}}[z_{l}\alpha_{t}(z)] = 2 \sum_{i,l}x_{i} C_{t}^{jl} \psi_{li} + 2 \sum_{l}C_{t}^{jl}\eta_{l},
\end{align*}
or, in matrix form:
\[
v(x) = -\frac{1}{2}C_{t} A (x+\mu_{t}) + C_{t} B \beta.
\]
Recalling that $\mu_{t} = \mu(\rho_{t})$, we substitute the values of $A$ and $B$ back and hence conclude \eqref{eq:KB ODE}.

\textbf{Proof of Lemma \ref{lem:flow of weight}}

    The total derivative of $w_{t}^{i}$ is given by
    \begin{equation}
    \label{eq:total derivative of weight}
    \frac{\mathrm{d}w_{t}^{i}}{\mathrm{d}t} = \frac{\partial w_{t}}{\partial t}(X_{t}^{i}) + \nabla w_{t}(X_{t}^{i}) \cdot \frac{\mathrm{d}X_{t}^{i}}{\mathrm{d}t}(X_{t}^{i}).
\end{equation}
We now compute the three terms on the right-hand side of \eqref{eq:total derivative of weight}, starting with the first one:
\begin{align*}
    \frac{\partial w_{t}}{\partial t} &= \frac{\rho_{t}\partial_{t}(e^{-th}\pi_{0}) - e^{-th}\pi_{0} \,\partial_{t}\rho_{t}}{\rho_{t}^{2}} = -\frac{h e^{-th} \pi_{0}}{\rho_{t}} - \frac{e^{-th} \pi_{0}}{\rho_{t}}\frac{\partial_{t}\rho_{t}}{\rho_{t}}\\
    &= -hw_{t} - w_{t} \frac{\partial_{t}\rho_{t}}{\rho_{t}} = -w_{t}(h - \nabla \log \rho_{t} \cdot u_{t} - \nabla\cdot u_{t}),
\end{align*}
where in the last step we have used the continuity equation \eqref{eq:continuity equation for flow}. Hence, 
\begin{equation}
    \label{eq:time derivative of weight}
    \frac{\partial w_{t}}{\partial t}(X_{t}^{i}) = -w_{t}(X_{t}^{i})(h(X_{t}^{i}) - \nabla \log \rho_{t}(X_{t}^{i}) \cdot u_{t}(X_{t}^{i}) - \nabla\cdot u_{t}(X_{t}^{i})).
\end{equation}

Similarly, we have
\begin{align*}
    \nabla w_{t} &= \frac{\rho_{t}\nabla(e^{-th}\pi_{0}) - e^{-th}\pi_{0} \,\nabla\rho_{t}}{\rho_{t}^{2}}\\
    &= \frac{e^{-th}\pi_{0}}{\rho_{t}}\frac{\nabla(e^{-th}\pi_{0})}{e^{-th}\pi_{0}} - \frac{e^{-th}\pi_{0}}{\rho_{t}}\frac{\nabla \rho_{t}}{\rho_{t}}= w_{t}(\nabla \log \pi_{t} - \nabla \log \rho_{t}),
\end{align*}
implying that 
\begin{equation}
    \label{eq:gradient for weight}
    \nabla w_{t}(X_{t}^{i}) = w_{t}(X_{t}^{i})(\nabla \log \pi_{t}(X_{t}^{i}) - \nabla \log \rho_{t}(X_{t}^{i})).
\end{equation}
Now plugging \eqref{eq:time derivative of weight}, \eqref{eq:gradient for weight} and \eqref{eq:ODE weighting} into \eqref{eq:total derivative of weight} we obtain
\begin{equation}
    \label{eq:flow of weight in appendix}
    \frac{\mathrm{d}w_{t}^{i}}{\mathrm{d}t} = w_{t}^{i} \left(\nabla \log \pi_{t}(X_{t}^{i}) \cdot u_{t}(X_{t}^{i}) + \nabla \cdot u_{t}(X_{t}^{i}) - h(X_{t}^{i})\right).
\end{equation}
Dividing both sides by $w_{t}^{i}$ we get to the result.

\textbf{Proof of Proposition \ref{prop:regression}}

We begin by showing that $J_{\varepsilon,t}$ is well defined on $(L^2(\rho_t))^d$. For that purpose, assume first that $v$ is smooth and compactly supported. In that case, we can write 
\begin{align}
\label{eq:MMD proof}
\mathrm{MMD}_k^2(\partial_t \rho^v_t, & \partial_t \pi_t)  = \left\Vert \int_{\mathbb{R}^d} k(\cdot, x) \partial_t \rho_t^v(\mathrm{d}x)  - \int_{\mathbb{R}^d} k(\cdot,x) \partial_t \pi_t(\mathrm{d}x) \right \Vert^2_{\mathcal{H}_k}
\\
& = \int_{\mathbb{R}^d} \int_{\mathbb{R}^d} k(x,y) \left( \partial_t \rho_t^v(\mathrm{d}x) - \partial_t \pi_t(\mathrm{d}x) \right) \left( \partial_t \rho_t^v(\mathrm{d}y) - \partial_t \pi_t(\mathrm{d}y) \right)
\nonumber
\\
& = \int_{\mathbb{R}^d} \int_{\mathbb{R}^d} k(x,y) \left( \nabla\cdot(\rho_t v)(x) - \overline{h}(x) \rho_t(x) \right) \left( \nabla\cdot(\rho_t v)(y) - \overline{h}(y) \rho_t(y) \right) \mathrm{d}x \, \mathrm{d}y,
\nonumber
\end{align}
where we have introduced the short-hand notation $\overline{h} = h - \int_{\mathbb{R}^d} h \, \mathrm{d\rho_t}$.
Using integration by parts, we then see that 
\begin{subequations}
\begin{align*}
\mathrm{MMD}_k^2&(\partial_t \rho^v_t, \partial_t \pi_t)  = \sum_{i,j=1}^d \int_{\mathbb{R}^d} \int_{\mathbb{R}^d} \frac{\partial^2 k(x,y)}{\partial x^i \partial y^j} v_i(x) v_j(y) \rho_t( \mathrm{d}x) \rho_t( \mathrm{d}y)
\\
& + 2 \int_{\mathbb{R}^d} \int_{\mathbb{R}^d} v(x) \cdot \nabla_x k(x,y)  \overline{h}(y) \rho_t(\mathrm{d}x) \rho_t(\mathrm{d}y) + \int_{\mathbb{R}^d} \int_{\mathbb{R}^d} \overline{h}(x) \overline{h}(y) \rho_t(\mathrm{d}x) \rho_t(\mathrm{d}y). 
\end{align*}
\end{subequations}
Because of the assumptions on $k$ and $h$, we conclude that $v \mapsto \mathrm{MMD}_k^2(\partial_t \rho^v_t, \partial_t \pi_t)$ is continuous for the $(L^2(\rho_t))^d$-topology, and therefore can be extended uniquely to $(L^2(\rho_t))^d$. We furthermore see that $J_{\varepsilon,t}$ is quadratic and hence strictly convex and coercive, and thus there exists a unique minimiser $v^* \in (L^2(\rho_t))^d$. 

We now use the regression interpretation suggested by \eqref{eq:regression} to connect \eqref{eq:MMD functional} to an appropriate Tikhonov functional \citep[Section 2.2]{kirsch2011introduction}. Following the discussion in Section \ref{sec:diffusion maps}, we consider the (unbounded) operator $${A = C\nabla :L^2(\rho_t) \rightarrow (L^2(\rho_t))^d}$$ and its formal adjoint $A^*: (L^2(\rho_t))^d \rightarrow L^2(\rho_t)$, given by $A^* v = -\tfrac{1}{\rho_t} \nabla \cdot (\rho_t v)$. Using the inclusion operator $i: \mathcal{H}_k \hookrightarrow L^2(\rho_t)$ and its adjoint (see Section \ref{sec:diffusion maps}), we consider the composition 
\begin{equation*}
Ai : \left(\mathcal{H}_k, \langle \cdot, \cdot \rangle_{\mathcal{H}_k} \right) \rightarrow \left((L^2(\rho_t))^d, \langle \cdot, \cdot \rangle_{L^2(\rho_t),C} \right).
\end{equation*}
Because of the boundedness and differentiability conditions on $k$, this operator is bounded, see  \citet{zhou2008derivative} or \citet[Corollary 4.36]{steinwart2008support}, and so is its adjoint $(Ai)^*$.
According to \citet[Theorem 2.11]{kirsch2011introduction}, and using the Woodbury (or `push-through') identity, it follows that $v^*$ as in \eqref{eq:regression1} minimises the Tikhonov functional 
\begin{equation}
\label{eq:Tikhonov}
\Vert i^* (A^*v - \overline{h}) \Vert^2_{\mathcal{H}_k}   + \Vert v \Vert^2_{L^2(\rho_t), C}. 
\end{equation}
To finish the proof, it is thus sufficient to show that $\Vert i^* (A^*v - \overline{h}) \Vert^2_{\mathcal{H}_k} = \mathrm{MMD}_k^2(\partial_t \rho^v_t, \partial_t \pi_t)$.
For this, notice that 
\begin{equation}
\label{eq:i star Hk}
 \Vert i^*f \Vert^2_{\mathcal{H}_k}  =  \int_{\mathbb{R}^d} \int_{\mathbb{R}^d} f(x) k(x,y) f(y) \rho_t(\mathrm{d}x) \rho_t(\mathrm{d}y), \qquad f \in L^2(\rho_t),
\end{equation}
by \citet[Theorem 4.26]{steinwart2008support}. The claim follows by comparing \eqref{eq:i star Hk} with \eqref{eq:MMD proof}, and using the fact that $A^* v = -\tfrac{1}{\rho_t} \nabla \cdot (\rho_t v)$.

\vskip 0.2in
\bibliography{ref}

\end{document}